\DeclareMathOperator{\J}{J}
\DeclareMathOperator{\I}{I}
\newcommand{\T}{\mathbb{T}}
\DeclareMathOperator{\interior}{int}
\DeclareMathOperator{\dist}{dist}
\newcommand{\Bell}{\boldsymbol{B}}
\newcommand{\BellC}{\boldsymbol{B}^{\circ}}
\newcommand{\BellCb}{\boldsymbol{B}^{\circ,\mathrm{b}}}
\newcommand{\Bellb}{\boldsymbol{B}^{\mathrm{b}}}
\newcommand{\Class}{\boldsymbol{A}}
\newcommand{\ClassC}{\boldsymbol{A}^{\circ}}
\newcommand{\BMO}{\mathrm{BMO}}
\newcommand{\OMM}{\mathfrak{W}}
\newcommand{\DD}{\mathfrak{D}}
\newcommand{\eps}{\varepsilon}
\DeclareMathOperator{\cl}{cl}
\DeclareMathOperator{\conv}{conv}
\DeclareMathOperator{\E}{\mathbb{E}}
\newcommand{\av}[2]{\langle {#1}\rangle_{{}_{#2}}}
\renewcommand{\leq}{\leqslant}
\renewcommand{\geq}{\geqslant}
\renewcommand{\emptyset}{\varnothing}
\newcommand{\FixedBoundary}{\partial_{\mathrm{fixed}}}
\newcommand{\FreeBoundary}{\partial_{\mathrm{free}}}
\newcommand{\LC}[2]{\Lambda_{{#1},{#2}}}
\newcommand{\scalprod}[2]{\langle{#1},{#2}\rangle}
\newcommand{\per}{\hbox{\tiny \textup{per}}}
\newcommand{\BG}{\mathfrak{B}}
\newcommand{\BM}{\EuScript{B}}
\newcommand{\BMb}{\EuScript{B}^{\mathrm{b}}}
\newcommand{\BMs}{\EuScript{B}^{\mathrm{s}}}
\newcommand{\M}{\EuScript{M}}
\newcommand{\OmNull}{\Omega_0}
\newcommand{\OmOne}{\Omega_1}
\newcommand{\Om}{\Omega}
\newcommand{\tOm}{\tilde{\Om}}
\newcommand{\tOmOne}{\tilde{\Omega}_1}
\newcommand{\OmStar}{\Omega^*}
\newcommand{\hOmOne}{\hat{\Omega}_1}
\newcommand{\hOm}{\hat{\Om}}
\newcommand{\om}{\omega}
\newcommand{\tom}{\tilde{\om}}
\newcommand{\tG}{\tilde{G}}
\newcommand{\R}{\mathbb{R}}
\newcommand{\SSet}[2]{\Bigg\{{#1}\;\Bigg|\,{#2}\Bigg\}}
\newcommand{\Set}[2]{\Big\{{#1}\;\Big|\,{#2}\Big\}}
\newcommand{\set}[2]{\{{#1}\;|\,{#2}\}}
\newcommand{\eq}[1]{\begin{equation}{#1}\end{equation}}
\newcommand{\mlt}[1]{\begin{multline}{#1}\end{multline}}
\newcommand{\alg}[1]{\begin{align}{#1}\end{align}}
\newcommand{\Leqref}[1]{\stackrel{\scriptscriptstyle{\eqref{#1}}}{\leq}}
\newcommand{\Lseqref}[1]{\stackrel{\scriptscriptstyle{\eqref{#1}}}{\lesssim}}
\newcommand{\Lref}[1]{\stackrel{#1}{\leq}}
\newcommand{\F}{\mathcal{F}}
\DeclareMathOperator{\Vis}{Vis}
\newtheorem{Le}{Lemma}[section]
\newtheorem{Def}[Le]{Definition}
\newtheorem{St}[Le]{Proposition}
\newtheorem{Th}[Le]{Theorem}
\newtheorem{Cor}[Le]{Corollary}
\newtheorem{Rem}[Le]{Remark}
\newtheorem{Fact}[Le]{Fact}
\numberwithin{equation}{section}
\begin{document}
\author{Dmitriy~Stolyarov %\and % Vasily Vasyunin 
\and Pavel~Zatitskiy}
\title{On locally concave functions on simplest non-convex domains
\thanks{Supported by the Russian Science Foundation grant 19-71-10023.}}
\maketitle
\begin{abstract}
We prove that certain Bellman functions of several variables are the minimal locally concave functions. This generalizes earlier results about Bellman functions of two variables. 
\end{abstract}

\section{Introduction}
The aim of the present paper is to extend the theory of~\cite{StolyarovZatitskiy2016}. The main result of that article said that certain Bellman functions of two variables coincide with minimal locally concave functions in the case when their domain is a set theoretic difference of two unbounded convex sets, the smaller lying strictly inside the larger one. The proof relied upon a special class of~$\R^2$-valued martingales and the notion of monotonic rearrangement. We improve these results in two directions: we allow our Bellman functions to depend on more than two variables and also work with the case when the domain is bounded (and therefore, not simply connected in dimension 2). While the special martingales work perfectly in this setting, the notion of monotonic rearrangement is, seemingly, not applicable. We substitute it with the notion of homogenization of a function from~\cite{StolyarovZatitskiy2021}. 

The work is technical: we mostly combine the ideas and methods of two cited papers. Our main results are Theorems~\ref{IntervalTheorem} and~\ref{CircleTheorem}.  Sections~\ref{S2}, \ref{S3}, \ref{S4}, and~\ref{S5} contain definitions, examples, descriptions of previous development of the theory, and statements of the results. Sections~\ref{S6}, \ref{S7}, and~\ref{S8} contain the proofs. We also place two auxiliary results in Section~\ref{S9}.

We wish to thank Vasily Vasyunin for his attention to our work.

\section{Classes of functions}\label{S2}
Let~$\OmNull$ be a non-empty proper open convex  subset of~$\R^d$, here~$d$ is a natural number. Usually~$d \geq 2$. %Assume~$\OmNull$ is strictly convex in the sense that its boundary does not contain segments. 
Let~$\OmOne$ be another open convex set such that~$\cl\OmOne \subset \OmNull$. It will be convenient to use the notation
\eq{\label{eqdefOmega}
\Om = \cl\OmNull \setminus \OmOne.
}
 We assume~$\Omega_1$ is non-empty for convenience (the case of empty~$\Omega_1$ may be considered by means of classical convex geometry). Let~$\I\subset \R$ be an interval. Consider the class of~$\R^d$ valued summable functions~$\varphi$ defined by the domains~$\OmNull$ and~$\OmOne$:
\eq{\label{Class}
\Class = \Set{\varphi \colon \I\to \partial \OmNull}{\forall\ \J \ \text{subinterval of}\ \I\qquad \av{\varphi}{\J} \notin \OmOne}.
}
Here and in what follows we use the notation
\eq{
\av{\varphi}{E} = \frac{1}{|E|} \int\limits_{E}\varphi(x)\,dx
}
for the average of a summable function~$\varphi$ over a measurable set~$E$ whose Lebesgue measure satisfies the requirement~$0 < |E| < \infty$. Sometimes we will call the points~$\av{\varphi}{\J}$,~$\J$ being a subinterval of~$\I$, the Bellman points of~$\varphi$. Now we will show how several useful classes of functions may be described using particular choices of~$\OmNull$ and~$\OmOne$.

\paragraph{Muckenhoupt classes.} Let~$d=2$ and~$\delta > 1$. We pick particular domains
\eq{\label{MuckenhouptDomain}
\begin{aligned}
\OmNull = \set{(x,y)\in \R^2}{x \geq 0, y \geq 0,\quad xy > 1};\\
\OmOne = \set{(x,y) \in \R^2}{x \geq 0, y \geq 0,\quad xy > \delta}.
\end{aligned}
}
See Fig.~\ref{BasicDomains} for visualization. Consider the class~$\Class$ generated by these domains and a function~$\varphi\in \Class$. Let~$w$ be the first coordinate of~$\varphi$, i.\,e.,~$\varphi(t) = (w(t),w^{-1}(t))$ and~$w \colon \I \to \R_+$ is a scalar almost everywhere positive function. The condition~$\av{\varphi}{\J} \notin \OmOne$
in the definition~\eqref{Class} is rewritten in terms of~$w$ as
\eq{
\av{w}{\J}\av{w^{-1}}{\J} \leq \delta.
}
By our definition, this condition is fulfilled for any interval~$\J \subset \I$ by the requirement~$\varphi \in \Class$. Therefore,~$[w]_{A_2} \leq \delta$ (see Chapter V in~\cite{Stein1993} for definition and basic properties of the Muckenhoupt classes~$A_p$). More specifically, we have proved a simple lemma.
\begin{Le}
The condition~$[w]_{A_2} \leq \delta$ is equivalent to~$\varphi \in \Class$\textup, where the domains~$\OmNull$ and~$\OmOne$ are defined in~\eqref{MuckenhouptDomain} and~$\varphi = (w,w^{-1})$.
\end{Le}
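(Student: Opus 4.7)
The proof is an unpacking of definitions; no genuine obstacle is expected. The plan is to verify that under the identification $\varphi(t) = (w(t), w^{-1}(t))$, membership in $\Class$ is literally the statement $[w]_{A_2} \leq \delta$.

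First, I would describe the geometry of the chosen domains. The set $\OmNull$ from~\eqref{MuckenhouptDomain} is (up to a harmless abuse in writing $x\geq0$, since $xy>1$ forces $x,y>0$) the open convex region lying strictly above the hyperbola $xy=1$ in the open first quadrant; in particular $\partial \OmNull = \{(x,1/x)\mid x>0\}$. The requirement $\varphi\colon \I \to \partial\OmNull$ built into the definition~\eqref{Class} therefore amounts to prescribing that $\varphi(t) = (w(t), w(t)^{-1})$ for some almost everywhere positive scalar function $w\colon \I\to\R_+$. Thus the parametrization used in the statement is in bijection with the set of maps $\I\to \partial\OmNull$.

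Next, I would fix a subinterval $\J \subset \I$ and compute the corresponding Bellman point. By linearity of the average,
\eq{\label{planavg}
\av{\varphi}{\J} = \bigl(\av{w}{\J},\, \av{w^{-1}}{\J}\bigr),
}
and both coordinates are strictly positive. Using the definition of $\OmOne$ from~\eqref{MuckenhouptDomain}, the condition $\av{\varphi}{\J}\notin \OmOne$ becomes exactly
\eq{\label{planAtwo}
\av{w}{\J}\,\av{w^{-1}}{\J} \leq \delta.
}

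Finally, I would invoke the definition of the Muckenhoupt characteristic $[w]_{A_2} = \sup_{\J}\av{w}{\J}\av{w^{-1}}{\J}$, where the supremum is over all subintervals $\J \subset \I$. The condition~\eqref{planAtwo} required to hold for every such $\J$ is therefore equivalent to $[w]_{A_2}\leq \delta$. This yields both directions of the claimed equivalence simultaneously. Because every implication is a direct reading of the definitions, there is no real obstacle; the only point that deserves a line of comment is the positivity of $w$, which is built into $\varphi$ taking values in $\partial\OmNull$.
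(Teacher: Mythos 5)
Your proof is correct and follows essentially the same route as the paper: parametrize $\partial\OmNull$ by the first coordinate so that $\varphi=(w,w^{-1})$, compute $\av{\varphi}{\J}=(\av{w}{\J},\av{w^{-1}}{\J})$, and observe that $\av{\varphi}{\J}\notin\OmOne$ is exactly $\av{w}{\J}\av{w^{-1}}{\J}\leq\delta$, so quantifying over all subintervals $\J\subset\I$ gives $[w]_{A_2}\leq\delta$. No discrepancies.
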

One may prove a similar statement for~$A_p$ classes when~$1 < p < \infty$ and for~$A_{\infty}$ as well, provided the latter class is equipped with Hruschev's norm. The only difference is that one should replace the expression~$xy$ in~\eqref{MuckenhouptDomain} with~$xy^{p-1}$ (and~$x e^{-y}$ in the case~$p=\infty$, see~\cite{Vasyunin2003} for details). See Section~$2$ in~\cite{IOSVZ2015} or Subsection~$1.3$ in~\cite{StolyarovZatitskiy2016} for more information.

\begin{figure}
    \includegraphics[width=0.4\textwidth]{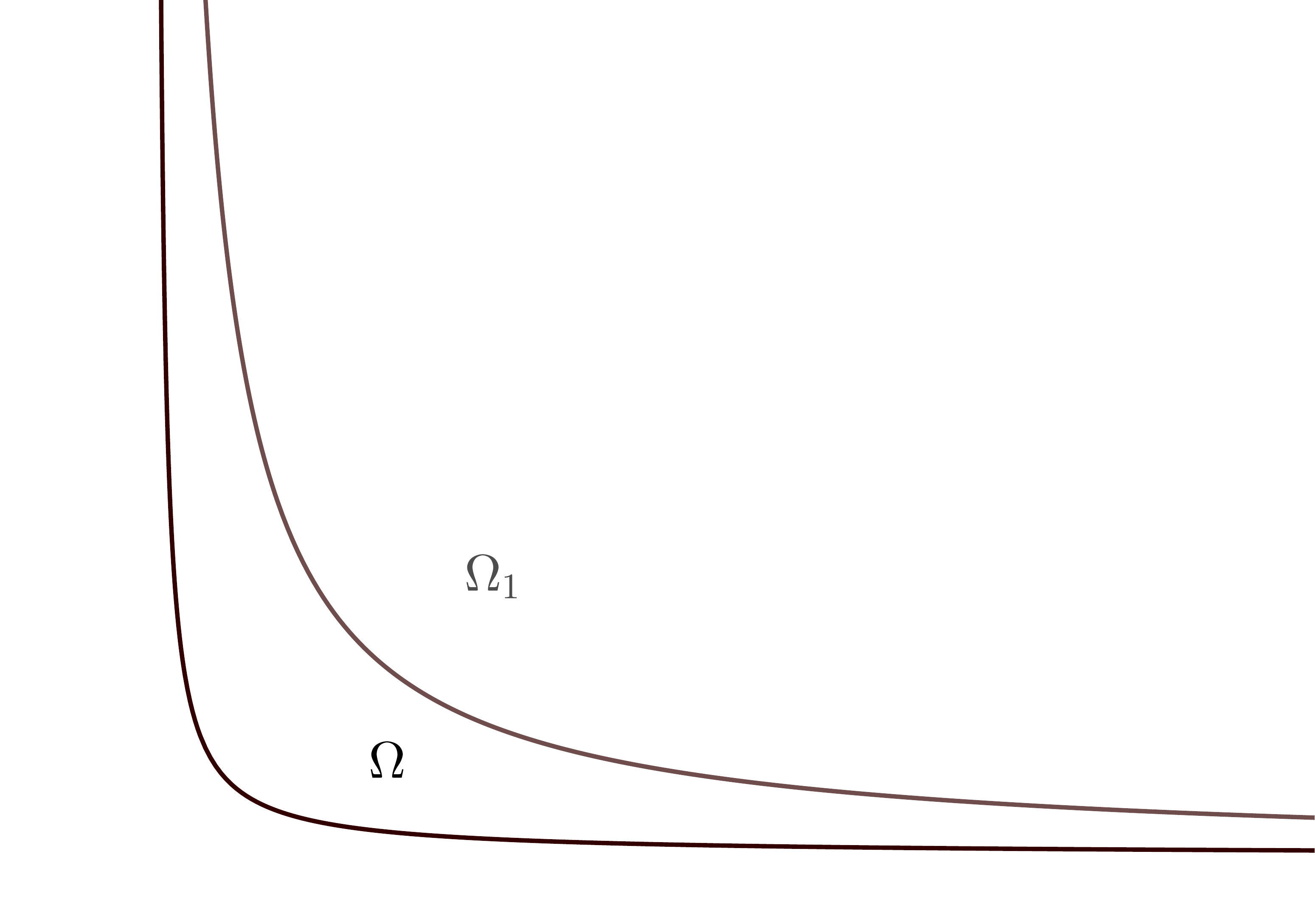}
    \includegraphics[width=0.18\textwidth]{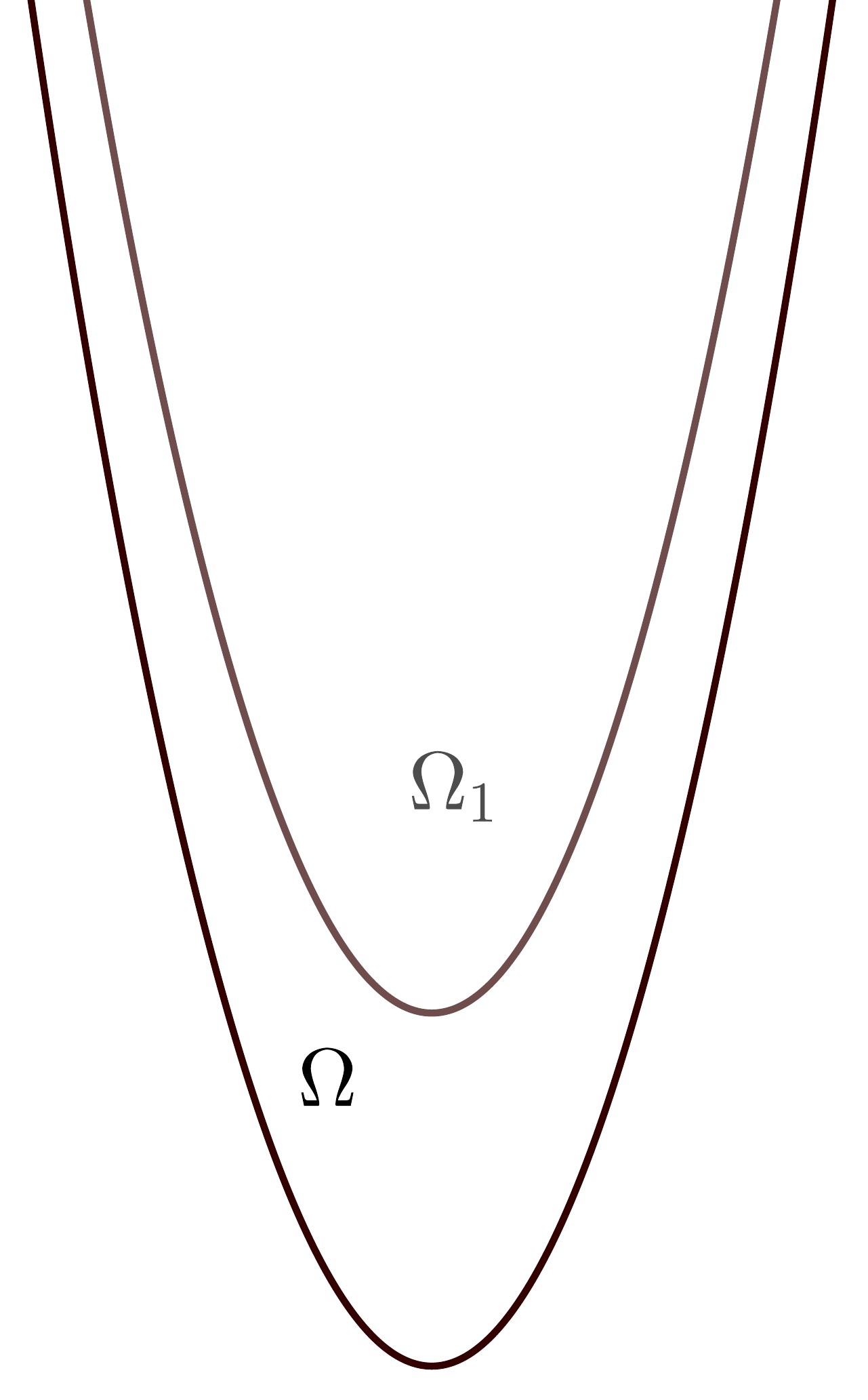}
    \includegraphics[width=0.3\textwidth]{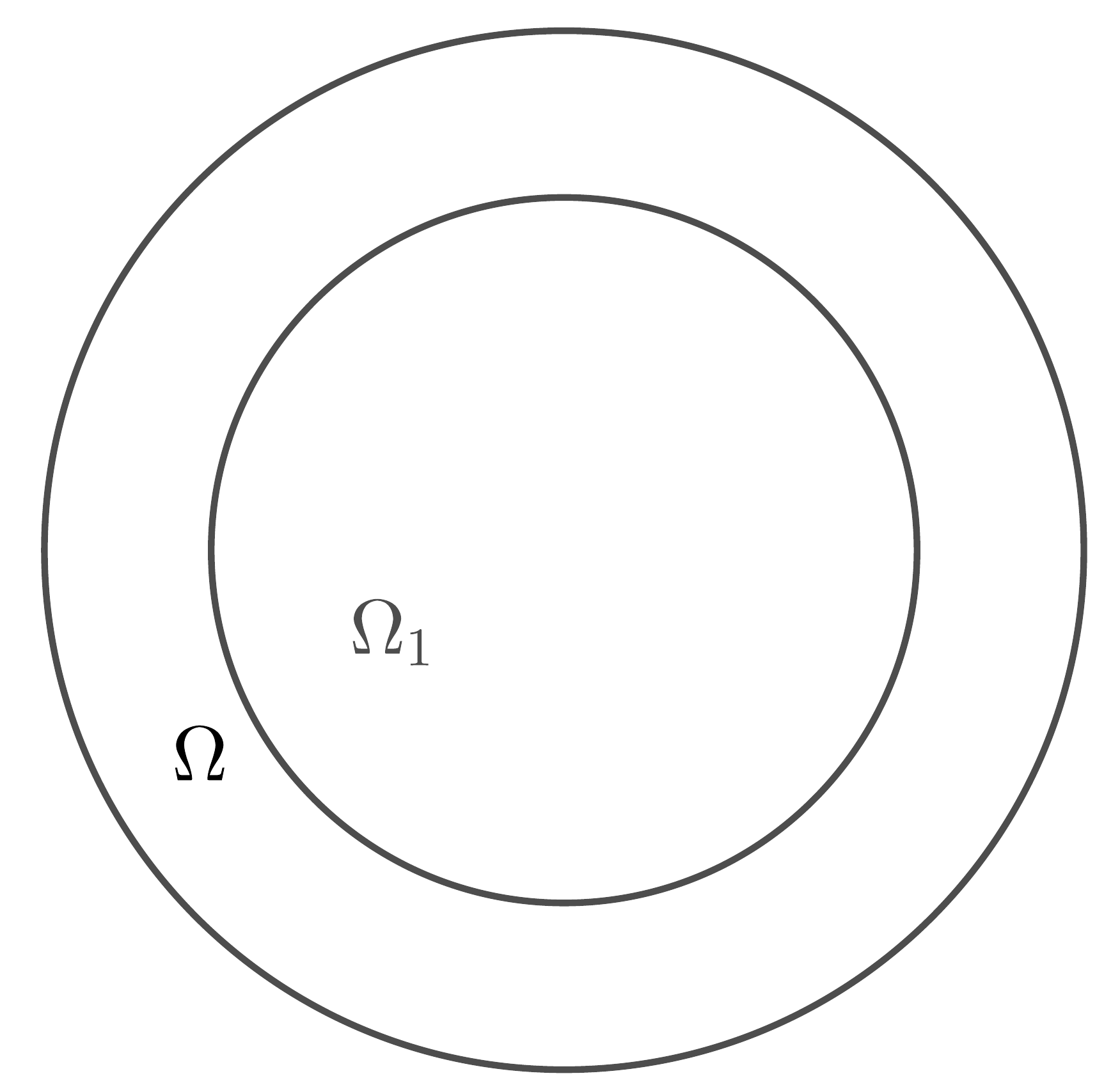}
    \caption{The domains corresponding to~$A_2$, \eqref{MuckenhouptDomain}; scalar-valued~$\BMO_\eps$, \eqref{BMOCase}; and~$\BMO_\eps(\I,S^1)$, \eqref{SphereBMOCase}.}
    \label{BasicDomains}
\end{figure}

\paragraph{Spaces~$\BMO$ of vector-valued functions.} Let~$d$ be an arbitrary natural number larger than one and let~$\eps > 0$. The notation~$|z|$ means the Euclidean norm of~$z \in \R^{d-1}$:
\eq{
|z| = \Big(\sum\limits_{j=1}^{d-1}z_j^2\Big)^\frac12.
} 
Consider the case
\eq{\label{BMOCase}
\begin{aligned}
\OmNull &= \set{(x,y) \in \R^{d-1} \times \R_+}{y > |x|^2};\\
\OmOne &= \set{(x,y) \in \R^{d-1} \times \R_+}{y > |x|^2+ \eps^2}.
\end{aligned}
}
Let~$\psi(t)$ be the vector in~$\R^{d-1}$ formed by the first~$(d-1)$ coordinates of~$\varphi(t)$, where~$\varphi \in \Class$, i.\,e., $\varphi=(\psi,|\psi|^2)$. Then, condition~\eqref{Class} turns into
\eq{
\av{|\psi|^2}{\J} \leq |\av{\psi}{\J}|^2 + \eps^2,
}
which may be rewritten as
\eq{
\av{|\psi - \av{\psi}{\J}|^2}{\J} \leq \eps^2.
}
Since the requirement~$\varphi \in \Class$ means the above inequality holds true for any interval~$\J \subset \I$, we have
\eq{
\|\psi\|_{\BMO(\I)} \leq \eps,
}
provided we define the~$\BMO(\I)$ norm of a vectorial function by the rule
\eq{\label{BMOnorm}
\|\psi\|_{\BMO(\I)} = \Big(\sup\limits_{\J \subset \I} \frac{1}{|\J|}\int\limits_{\J}\Big|\psi(t) - \frac{1}{|\J|}\int\limits_{\J} \psi(s)\,ds\Big|^2\,dt\Big)^{\frac12},
}
where the supremum is taken over all subintervals of~$\I$.
We refer the reader to Chapter IV of~\cite{Stein1993} for the definition and basic properties of the~$\BMO$ space of scalar functions; the quantitative properties of vectorial~$\BMO$ functions are almost the same as that of scalar functions. With this definition at hand, we state yet another simple lemma.
\begin{Le}
The condition~$\|\psi\|_{\BMO(\I)} \leq \eps$ is equivalent to~$\varphi \in \Class$\textup, where the domains are given in~\eqref{BMOCase} and~$\varphi = (\psi,|\psi|^2)$.
\end{Le}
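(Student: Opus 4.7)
The proof is essentially already carried out in the discussion preceding the lemma statement, so my plan is to package those computations into a clean two-direction argument.

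First, I would verify that the map $\varphi = (\psi, |\psi|^2)$ is automatically valued in $\partial\OmNull$: the boundary of $\OmNull$ in \eqref{BMOCase} is exactly the paraboloid $\{(x,y) : y = |x|^2\}$, so this is immediate. Next, for a subinterval $\J \subset \I$ I would expand
\eq{
\av{\varphi}{\J} = \bigl(\av{\psi}{\J},\, \av{|\psi|^2}{\J}\bigr),
}
and observe that the statement $\av{\varphi}{\J} \notin \OmOne$ is equivalent to the inequality
\eq{
\av{|\psi|^2}{\J} \leq |\av{\psi}{\J}|^2 + \eps^2.
}

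The second step is the standard variance identity
\eq{
\av{|\psi - \av{\psi}{\J}|^2}{\J} = \av{|\psi|^2}{\J} - |\av{\psi}{\J}|^2,
}
obtained by expanding the square and using linearity of averaging. Combining this with the previous display rewrites the condition $\av{\varphi}{\J} \notin \OmOne$ as $\av{|\psi-\av{\psi}{\J}|^2}{\J}\leq \eps^2$.

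Finally, requiring the previous inequality for every subinterval $\J\subset \I$ simultaneously is, by the definition \eqref{BMOnorm}, exactly the condition $\|\psi\|_{\BMO(\I)}\leq \eps$. Reading the chain of equivalences in both directions gives the lemma. There is no genuine obstacle here; the only mildly non-trivial point is the algebraic variance identity above, and checking that $\varphi$ lies on $\partial\OmNull$ (not merely in $\cl\OmNull$) so that the hypothesis $\varphi\in \Class$ is well-posed.
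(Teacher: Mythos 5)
Your argument is correct and follows the paper exactly: the paper proves this lemma through the chain of computations in the paragraph immediately preceding the statement (rewriting $\av{\varphi}{\J}\notin\OmOne$ as $\av{|\psi|^2}{\J}\leq|\av{\psi}{\J}|^2+\eps^2$, applying the variance identity, and taking the supremum over $\J$), which is precisely what you have packaged into a two-direction argument. The only addition you make — the explicit check that $\varphi$ lands on $\partial\OmNull$ rather than merely in its closure — is a sensible piece of bookkeeping that the paper leaves implicit.
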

Note that we use the quadratic norm on~$\BMO$ in~\eqref{BMOnorm}. Usually, the definition of~$\BMO$ is given with the more widespread~$L_1$-based norm and after that it is proved via the John--Nirenberg inequality that the two norms are equivalent. Since we will be working with sharp constants, the choice of the particular norm is crucial. 

\paragraph{Functions of bounded mean oscillation with values in the unit sphere.} Let~$d \geq 2$ and let~$\eps \in (0,1)$.
Consider the case
\eq{\label{SphereBMOCase}
\begin{aligned}
\OmNull &= \set{x \in \R^{d}}{|x| < 1};\\
\OmOne &= \set{x \in \R^{d}}{|x|^2 < 1 - \eps^2}.
\end{aligned}
}
Here and in what follows we use the Euclidean norms in~$\R^d$. We see that the functions~$\varphi \in \Class$ attain values in the unit sphere~$S^{d-1}$. Computations similar to those we did in the case of~$\BMO$ functions lead to the following lemma.
\begin{Le}
Let~$\varphi \colon \I\to S^{d-1}$ be a summable function. The condition~$\|\varphi\|_{\BMO} \leq \eps$ is equivalent to~$\varphi \in \Class$\textup, where the domains are given in~\eqref{SphereBMOCase}.
\end{Le}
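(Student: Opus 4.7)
The plan is a direct pointwise computation. The key observation is that, because $\varphi$ is forced to take values on the unit sphere $S^{d-1} = \partial \OmNull$, its squared norm is identically $1$, so the variance collapses to a simple expression in the mean.

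First, I would unwind the definitions. The requirement $\varphi \colon \I \to \partial \OmNull$ from~\eqref{Class} means exactly that $|\varphi(t)| = 1$ for almost every $t \in \I$, so it is legitimate to view $\varphi$ as a sphere-valued function. For any subinterval $\J \subset \I$, the condition $\av{\varphi}{\J} \notin \OmOne$ in~\eqref{Class} translates, using~\eqref{SphereBMOCase}, into
\eq{\label{newbd1}
|\av{\varphi}{\J}|^2 \geq 1 - \eps^2.
}

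Second, I would compute the $\BMO$ oscillation on $\J$ using the identity $|\varphi|^2 \equiv 1$:
\algg{
\frac{1}{|\J|}\int_{\J}\big|\varphi(t) - \av{\varphi}{\J}\big|^2\,dt
&= \av{|\varphi|^2}{\J} - 2\scalprod{\av{\varphi}{\J}}{\av{\varphi}{\J}} + |\av{\varphi}{\J}|^2 \\
&= 1 - |\av{\varphi}{\J}|^2.
}
Plugging this into the definition~\eqref{BMOnorm}, we find
\eq{
\|\varphi\|_{\BMO(\I)}^2 = \sup_{\J \subset \I}\big(1 - |\av{\varphi}{\J}|^2\big).
}
Hence $\|\varphi\|_{\BMO(\I)} \leq \eps$ is equivalent to $1 - |\av{\varphi}{\J}|^2 \leq \eps^2$ for every subinterval $\J \subset \I$, which is exactly~\eqref{newbd1} holding uniformly in $\J$, i.e., $\varphi \in \Class$.

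There is essentially no obstacle here: once one notices that the sphere constraint makes the variance equal to $1 - |\text{mean}|^2$, the two conditions coincide by definition. The only mild care needed is to confirm that $\OmOne \subset \OmNull$ and that both sets are open convex (both are Euclidean balls, so this is immediate from $\eps \in (0,1)$), so that the framework of Section~\ref{S2} applies.
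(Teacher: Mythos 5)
Your proof is correct and is exactly the computation the paper has in mind (the paper does not spell it out, deferring to the earlier $\BMO$ case): the constraint $|\varphi|\equiv 1$ turns the variance identity $\av{|\varphi-\av{\varphi}{\J}|^2}{\J}=\av{|\varphi|^2}{\J}-|\av{\varphi}{\J}|^2$ into $1-|\av{\varphi}{\J}|^2$, making $\|\varphi\|_{\BMO(\I)}\leq\eps$ identical to the condition $\av{\varphi}{\J}\notin\OmOne$ for all $\J$.
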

Following~\cite{BrezisNirenberg1995}, we will call the class of spherically-valued functions whose~$\BMO$ norm does not exceed~$\eps$ the~$\eps$-ball of the space~$\BMO(\I,S^{d-1})$ and denote it by~$\BMO_\eps(\I,S^{d-1})$. Note that~$\BMO(\I,S^{d-1})$ does not have linear structure.

\paragraph{Domain related to multiplicative inequalities.} Here we set~$d=3$. Pick some~$p \in (1,\infty)$ and~$\eps > 0$. Consider the domains
\eq{\label{MultiDomain}
\begin{aligned}
&\OmNull = \interior\conv\set{(t,t^2,|t|^p)}{t\in \R};\\
&\OmOne = \set{(x,y,z) \in \R^3}{y > x^2 + \eps^2, z > 0}.
\end{aligned}
}
The notation~$\conv$ designates the convex hull of a set. Note that these domains do not fulfill the requirement~$\cl\OmOne \subset \OmNull$. They appeared naturally in the study of multiplicative inequalities involving the~$\BMO$ norm in~\cite{SVZ2020} and~\cite{VZZ2021}. In fact, the class~$\Class$ corresponds to the~$\eps$-ball of the~$\BMO$ space. The additional third coordinate allows to keep track of the~$L_p$ norm. This example will be mostly used to show the limitation of our current tools.

\begin{figure}
    \includegraphics[width=0.48\textwidth]{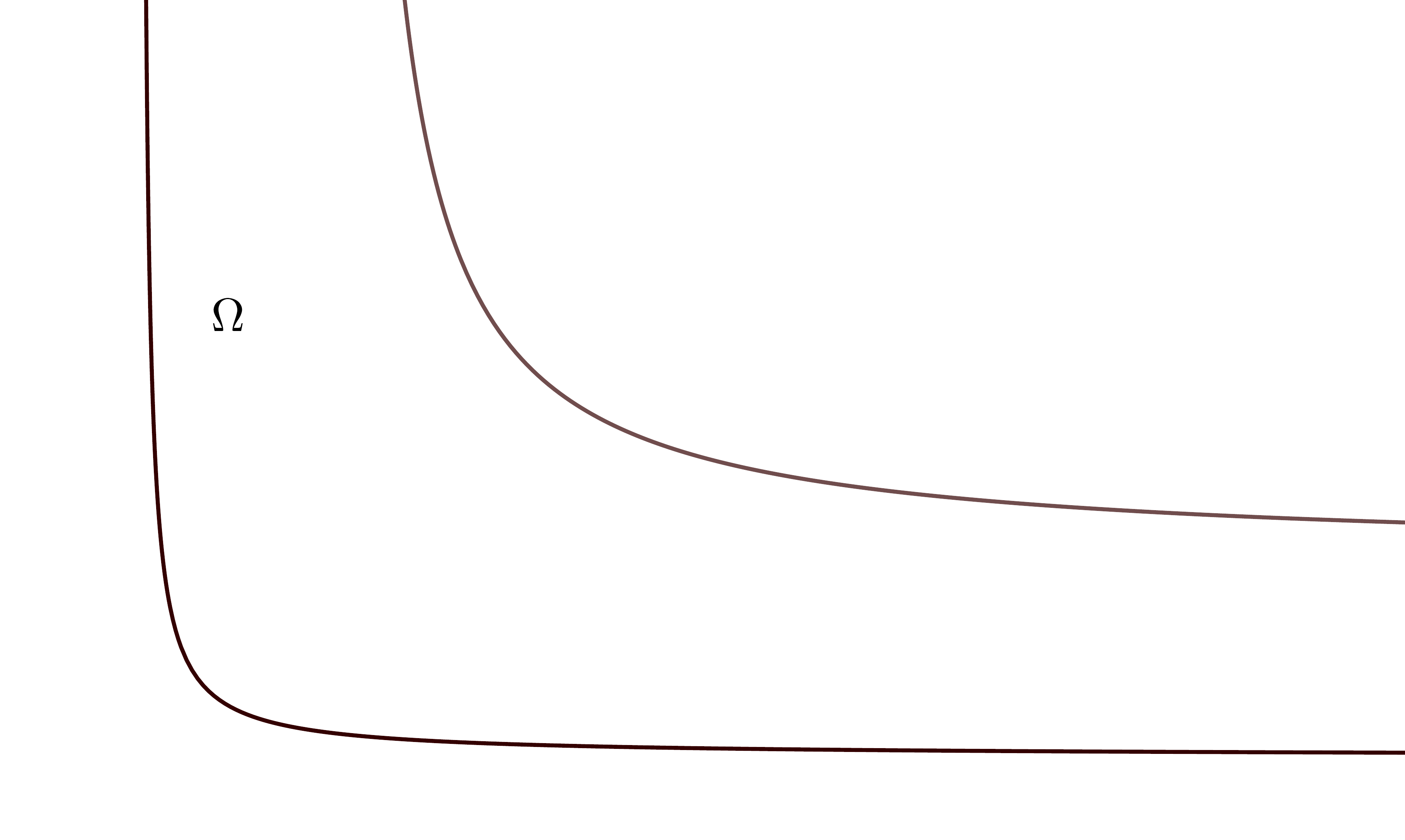}
    \includegraphics[width=0.48\textwidth]{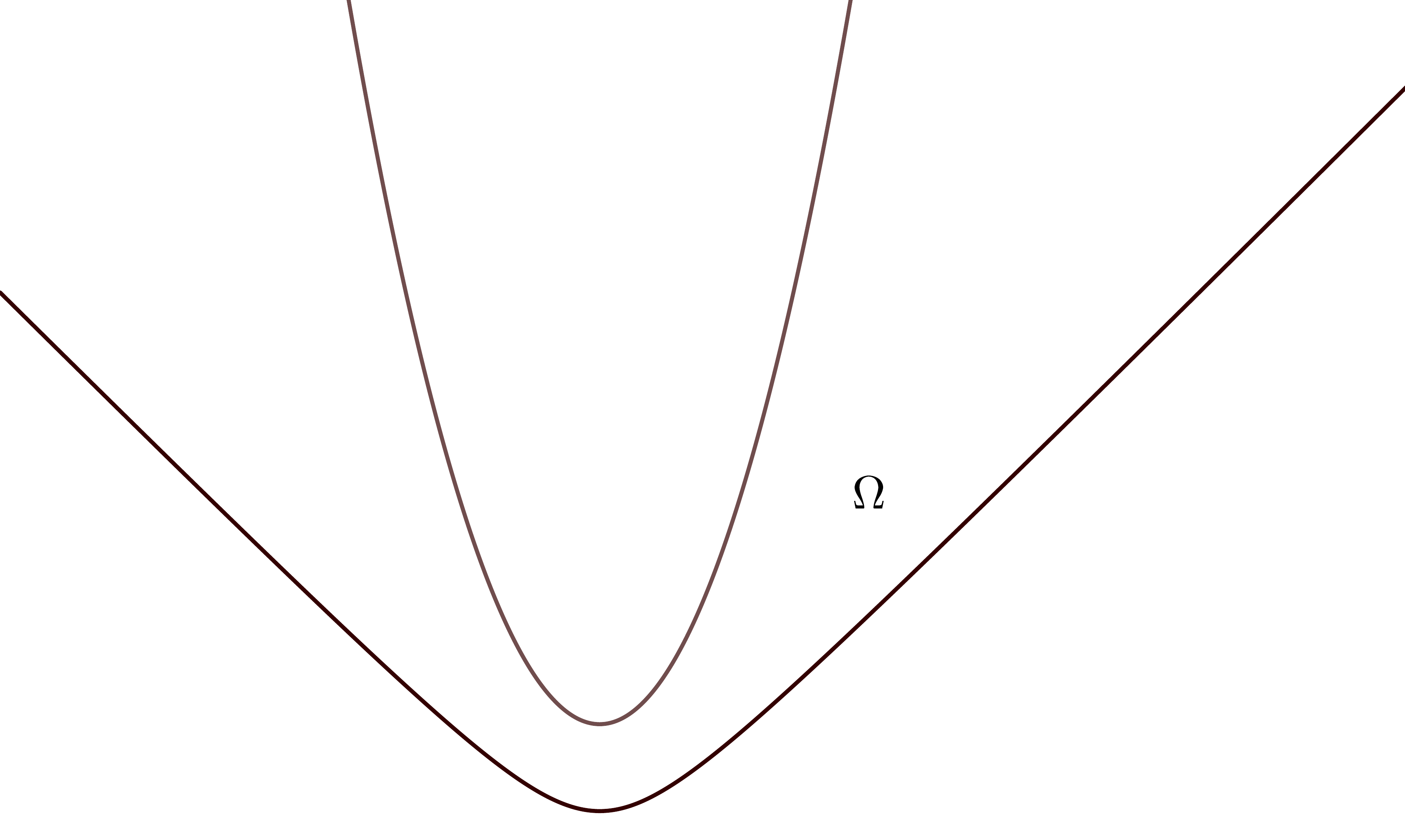}
    \caption{The first domain satisfies the cone condition whereas the second domain does not.}
    \label{RayCondition}
\end{figure}

We need to make further assumptions about the domains~$\OmNull$ and~$\OmOne$. The following two conditions appear naturally in the theory, in particular, the reader may find them in~\cite{StolyarovZatitskiy2016}.

\alg{
\label{StrictConvexity}&\text{{\bf Strict convexity condition:}\ \  the domains~$\OmNull$ and~$\OmOne$ are strictly convex.}\\
\notag\\
\label{ConeCondition} &\text{{\bf Cone condition:}\ \ any ray lying inside~$\OmNull$ has a translate copy that lies inside~$\OmOne$ entirely}.
}

Recall that a convex set is strictly convex provided its boundary does not contain linear segments. Equivalently, a set is strictly convex if and only if any point of its boundary is an exposed point, i.\,e., it is the unique point of intersection of the boundary with a supporting hyperplane. The second assumption somehow says~$\OmNull$ and~$\OmOne$ behave in a similar way at infinity. It may be restated: $\OmNull$ and $\OmOne$ have congruent maximal inscribed cones; this assumption is meaningless if~$\OmNull$ is bounded. The domains on Fig.~\ref{BasicDomains} satisfy conditions~\eqref{StrictConvexity} and~\eqref{ConeCondition} because the corresponding domains $\Omega$ defined in~\eqref{eqdefOmega} do not contain infinite rays. The domain between two shifted hyperbolas (see Fig.~\ref{RayCondition}),
\eq{\label{ExampleWithHyperbolas}
\Om = \Set{(x,y)\in \R^2}{x,y > 0,\quad xy > 1,\ \text{and}\ y<\frac{1}{x-1} + 1\ \text{when}\ x>1}
}
contains infinite rays, e.g., the ones parallel to the coordinate axes. It still satisfies condition~\eqref{ConeCondition}. The second domain on Fig.~\ref{RayCondition},
\eq{
\Om = \Set{(x,y)\in \R^2}{\sqrt{x^2 + 1}\leq y \leq x^2+2},
}
does not satisfy the cone condition. 

The domains of this type (i.\,e., a set theoretic difference of two strictly convex sets, the smaller one lying strictly inside the larger one, and such that~\eqref{ConeCondition} holds true) will be informally called \emph{lenses}. In the proof of the following lemma we will use the notation~$[A,B]$ to denote the straight line segment that connects~$A$ with~$B$.
\begin{Le}\label{DomainLemma}
Assume the domains $\OmNull$ and $\OmOne$ satisfy~\eqref{StrictConvexity}. In the case~$d=2$ we additionally assume~\eqref{ConeCondition}. Then\textup, for any~$x \in \Omega$ there exists a segment~$\ell_x\subset \Omega$ passing through~$x$ and whose endpoints lie on~$\partial \OmNull$.
\end{Le}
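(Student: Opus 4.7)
The plan is to split on whether $x$ lies on $\partial\Omega_0$ or in the open set $\Omega_0$. In the boundary case, the point $x$ sits at positive distance from the closed set $\cl\Omega_1$ (since $\cl\Omega_1 \subset \Omega_0$ and $\Omega_0 \cap \partial\Omega_0 = \emptyset$), so some ball $B(x,r)$ misses $\cl\Omega_1$; because $\partial\Omega_0$ has no isolated points when $d \geq 2$, one finds another boundary point $y \in \partial\Omega_0 \cap B(x,r) \setminus \{x\}$, and the short segment $\ell_x = [x,y]$ lies in $\cl\Omega_0$ by convexity and avoids $\Omega_1$ by the ball condition.

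For $x \in \Omega_0 \setminus \Omega_1$ I would build a full chord through $x$. Since $x \notin \Omega_1$, the supporting hyperplane theorem produces a hyperplane $H$ through $x$ with $\Omega_1$ contained in one of the open half-spaces bounded by $H$, and strict convexity of $\Omega_1$ collapses $H \cap \cl\Omega_1$ to a single point $p$. Any line $L \subset H$ through $x$ then automatically misses $\Omega_1$, so the entire task reduces to choosing $L$ so that $L \cap \cl\Omega_0$ is bounded; once that is arranged, the fact that $x$ is interior to $\cl\Omega_0$ makes this intersection a non-degenerate segment with both endpoints on $\partial\Omega_0$, and we set $\ell_x = L \cap \cl\Omega_0$.

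For $d = 2$, $L$ is forced to equal $H$, and I would argue by contradiction: if $L \cap \cl\Omega_0$ were unbounded in some direction $v$, then $v$ would lie in the recession cone of $\cl\Omega_0$; the cone condition (this is exactly where it enters) then places $v$ into the recession cone of $\Omega_1$, so the parallel ray $\{p + tv : t \geq 0\}$ lies in $\cl\Omega_1$. But this ray also lies on $L$, contradicting the single-point contact $L \cap \cl\Omega_1 = \{p\}$ given by strict convexity of $\Omega_1$. For $d \geq 3$ the cone condition is unavailable but unnecessary: strict convexity of $\Omega_0$ rules out lines in $\cl\Omega_0$, so $\mathrm{rec}(K) = (H - x) \cap \mathrm{rec}(\cl\Omega_0)$ with $K = H \cap \cl\Omega_0$ is a pointed convex cone inside the vector space $H - x$ of dimension $d - 1 \geq 2$; a pointed cone together with its opposite cannot fill a vector space of dimension at least two (a separating hyperplane of $\mathrm{rec}(K)$ and $-\mathrm{rec}(K)$ provides many surviving directions), so some $v$ outside $\mathrm{rec}(K) \cup -\mathrm{rec}(K)$ yields $L = x + \R v$ as required.

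The only delicate step is the boundedness of $L \cap \cl\Omega_0$ in the planar case, where no freedom to tilt $L$ inside $H$ remains and the contradiction has to be produced from a joint use of the cone condition and the strict convexity of $\Omega_1$. Everything else is a soft combination of the supporting hyperplane theorem and standard properties of convex sets.
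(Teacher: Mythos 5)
Your treatment of the case $x\in\partial\OmNull$ and the case $d\ge 3$ is sound. For $d\ge3$ your argument is in fact cleaner and more direct than the paper's, which instead slices $\OmNull$ by a translate of a supporting hyperplane of $\OmNull$ and inducts on dimension down to $d=2$ (where it then invokes the cone condition inside the bounded slice). Your pointed-cone argument avoids the induction entirely and uses strict convexity of $\OmNull$ rather than $\OmOne$.

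However, the $d=2$ case contains a genuine gap. You assert that any separating hyperplane $H$ through $x$ with $\OmOne$ in an open half-space satisfies $H\cap\cl\OmOne=\{p\}$ ``by strict convexity.'' This only follows if $H$ actually touches $\cl\OmOne$; when $x\notin\cl\OmOne$ the separating line may lie at a positive distance from $\cl\OmOne$, and then $H\cap\cl\OmOne=\emptyset$. In that case your contradiction argument has nothing to bite on: the recession direction $v$ is indeed in $\mathrm{rec}(\cl\OmOne)$ and is parallel to $H$, but without a contact point $p$ on $H$ no ray is forced into $H\cap\cl\OmOne$. Moreover the conclusion ``$L\cap\cl\OmNull$ is bounded'' can actually fail for a badly chosen $H$: take
\eq{
\OmOne=\set{(s,t)}{s>0,\ t>0,\ st>1},\qquad
\OmNull=\set{(s,t)}{s>-1,\ t>-1,\ (s+1)(t+1)>\tfrac12},\qquad x=(0,0).
}
Both sets are strictly convex, $\cl\OmOne\subset\OmNull$, and the cone condition holds since both recession cones equal the closed first quadrant. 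Here $H=\{t=0\}$ is a legitimate separating line through $x$, yet $H\cap\cl\OmOne=\emptyset$ and $H\cap\cl\OmNull=\{(s,0):s\ge -\tfrac12\}$ is unbounded, so your $\ell_x$ is not a segment. Worse, in this example \emph{no} supporting line to $\cl\OmOne$ passes through $x$ at all, so even strengthening your choice of $H$ to ``tangent to $\cl\OmOne$'' would not rescue the argument. The paper handles this by choosing the line differently: it takes the supporting line $l$ to $\cl\OmOne$ at the point $x_0\in\cl\OmOne$ nearest to $x$ (this line \emph{does} touch $\cl\OmOne$, so strict convexity gives $l\cap\cl\OmOne=\{x_0\}$), and only afterwards translates $l$ to pass through $x$. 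Unboundedness of the translated line's slice of $\cl\OmNull$ then yields a ray $x_0+\R_+v\subset l\cap\cl\OmOne$, a genuine contradiction. In the example above this produces the line $\{s+t=0\}$, whose intersection with $\cl\OmNull$ is the segment $\{(s,-s):|s|\le1/\sqrt2\}$.
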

\begin{figure}
\begin{center}
\includegraphics[width=0.5\textwidth]{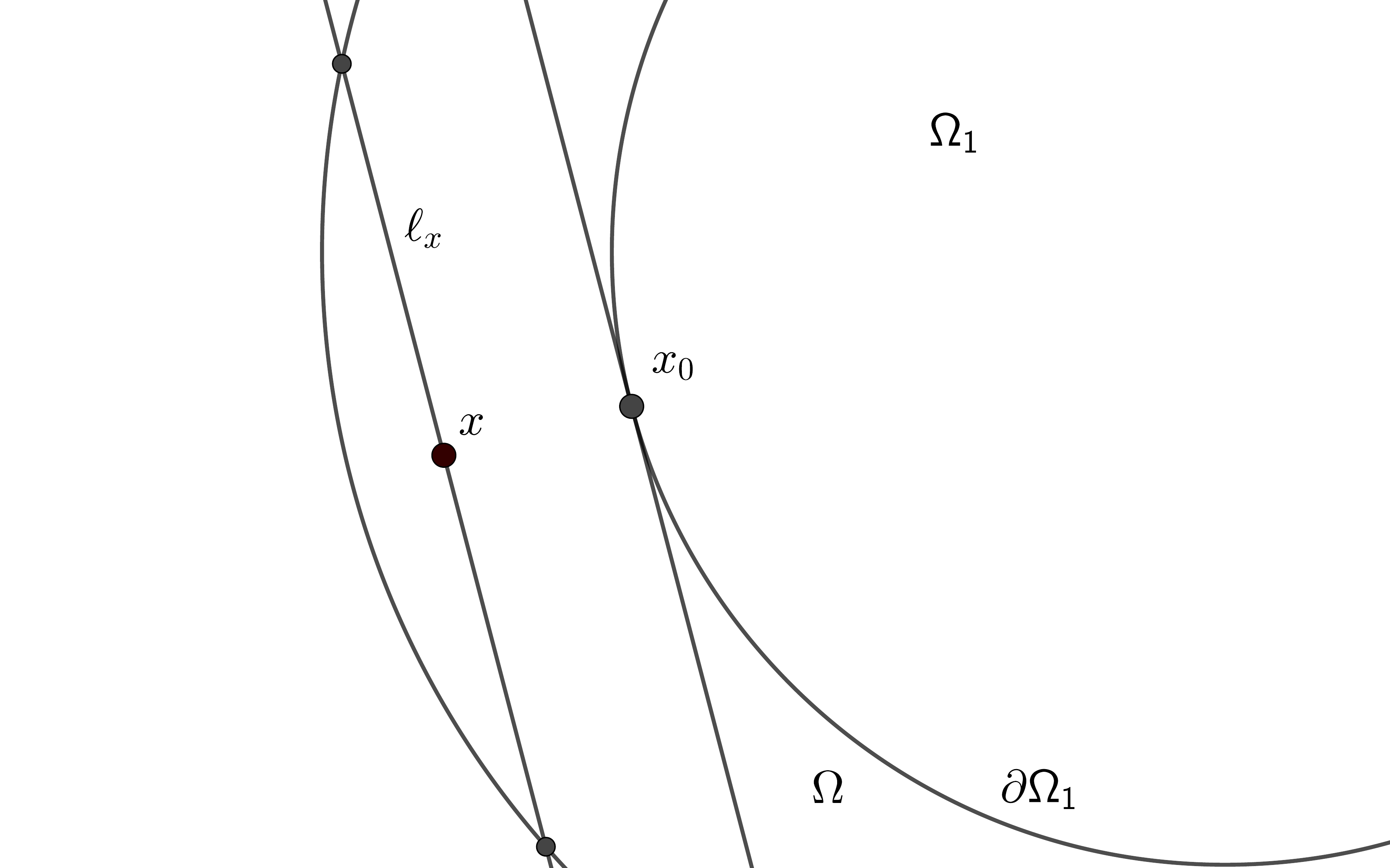}
\end{center}
\caption{Illustration to the proof of Lemma~\ref{DomainLemma}.}
\label{IllLem14}
\end{figure}
\begin{proof}
Consider the case~$d=2$ first. Let~$x_0$ be the closest to~$x$ point in~$\cl\OmOne$, it is clear that~$x_0 \notin \OmOne$. Let~$l$ be a line passing through~$x_0$ that is perpendicular to~$[x,x_0]$ (if $x=x_0 \in \partial\OmOne$, we take $l$ to be a supporting line to~$\cl\OmOne$ at $x_0$). Note that~$l$ does not intersect~$\OmOne$ and separates~$x$ from~$\OmOne$. By~\eqref{StrictConvexity}, the intersection of any translate of~$l$ with~$\OmOne$ is bounded. Thus, by~\eqref{ConeCondition}, the intersection of the translate of~$l$ passing through~$x$ with~$\OmNull$ is a finite segment, let it be~$\ell_x$. It remains to note that~$\ell_x \cap \OmOne = \emptyset$. 

Now let us turn to the case~$d \geq 3$. We will argue by induction over dimension. In fact, it suffices to show that for any~$x\in \Om$ there exists an affine hyperplane~$L$ passing through~$x$ such that~$L\cap \OmNull$ is bounded. If this assertion is proved, we may work inside the~$(d-1)$ dimensional plane~$L$ (condition~\eqref{ConeCondition} holds true for bounded domains). The desired hyperplane~$L$ is also easy to find: pick any supporting hyperplane to~$\OmNull$ and translate it to~$x$ (the intersection of this translated supported hyperplane and~$\OmNull$ is compact by~\eqref{StrictConvexity}).
\end{proof}
\begin{Rem}\label{DomainRemark}
In the assumptions of Lemma~\textup{\ref{DomainLemma},} if~$x\notin \cl \OmOne$\textup, then~$\ell_x$ may be chosen in such a way that~$\ell_x \cap \cl\OmOne = \emptyset$ as well.
\end{Rem}
\begin{Cor}\label{CorollaryOfDomainLemma}
In the assumptions of Lemma~\textup{\ref{DomainLemma},} for any~$x \in \Omega$ there exists~$\varphi \in \Class$ such that~$\av{\varphi}{\I} = x$.
\end{Cor}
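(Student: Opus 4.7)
The plan is to use Lemma~\ref{DomainLemma} directly: pick the segment $\ell_x\subset \Om$ it supplies, let $A,B\in\partial\OmNull$ be its endpoints, and write $x$ as a convex combination $x=\lambda A+(1-\lambda)B$ with $\lambda\in[0,1]$. Then I will split $\I$ into two consecutive subintervals $\J_1,\J_2$ of lengths $\lambda|\I|$ and $(1-\lambda)|\I|$, and define $\varphi$ to equal $A$ on $\J_1$ and $B$ on $\J_2$. Clearly $\varphi\colon\I\to\partial\OmNull$ is summable and $\av{\varphi}{\I}=\lambda A+(1-\lambda)B=x$.

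The only thing to verify is membership in $\Class$, i.e., that $\av{\varphi}{\J}\notin\OmOne$ for every subinterval $\J\subset\I$. Since $\varphi$ takes only two values $A,B$, the average $\av{\varphi}{\J}$ is a convex combination of $A$ and $B$ with weights $|\J\cap\J_1|/|\J|$ and $|\J\cap\J_2|/|\J|$. Every such combination lies on the segment $[A,B]=\ell_x$, which by the conclusion of Lemma~\ref{DomainLemma} is contained in $\Om=\cl\OmNull\setminus\OmOne$; in particular it avoids $\OmOne$. This gives $\varphi\in\Class$ and finishes the proof.

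There is no real obstacle: all of the geometric content has been absorbed into Lemma~\ref{DomainLemma}, and the corollary is essentially a one-line translation from the existence of a chord through $x$ to the existence of a two-valued test function realising $x$ as a Bellman point. The only minor care needed is the degenerate case $\lambda\in\{0,1\}$ (when $x$ itself lies on $\partial\OmNull$), where one of $\J_1,\J_2$ is empty and $\varphi$ is constant; this case is trivial since a constant function with value in $\partial\OmNull\subset\Om$ automatically lies in $\Class$.
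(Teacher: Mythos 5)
Your proof is correct and is essentially identical to the paper's: both build the same two-valued step function supported on $\ell_x$ from Lemma~\ref{DomainLemma} and observe that all Bellman points then lie on $\ell_x\subset\Om$. The only difference is that you spell out the degenerate case $\lambda\in\{0,1\}$, which the paper leaves implicit.
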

\begin{proof}
Construct a segment~$\ell_x$ with the help of Lemma~\ref{DomainLemma}: there exist points~$a,b\in \partial \OmNull$ and non-negative numbers~$\alpha$ and~$\beta$ with sum one such that
\eq{
x = \alpha a + \beta b \quad \text{and} \quad [a,b]\cap \OmOne=\varnothing.
}
Then, the desired function~$\varphi$ may be constructed as
\eq{\label{StepFunction}
\varphi(t) = \begin{cases}
a,\quad t \in [0,\alpha];\\
b,\quad t \in (\alpha,1],
\end{cases}
}
here we set~$\I = [0,1]$ for convenience (all the considerations do not depend on the particular choice of~$\I$). By construction, for any~$\J \subset [0,1]$, the point~$\av{\varphi}{\J}$ lies inside~$\ell_x$. Therefore,~$\varphi \in \Class$.
\end{proof}

\bigskip

\section{Functionals}\label{S3}
Let~$f\colon \partial \Omega_0 \to \mathbb{R}$ be a Borel measurable locally bounded function. We wish to find sharp estimates of the expression~$\av{f(\varphi)}{\I}$ when~$\varphi \in \Class$. Note that a priori it is unclear whether the integral in question exists. 
The function
\eq{\label{Bellman}
\Bell_{\Omega,f}(x) = \sup\Set{\av{f(\varphi)}{\I}}{\varphi \in \Class,\ \av{\varphi}{\I} = x},\quad x\in \Omega,
}
is well defined in the case where~$f$ is bounded from below (though this function may attain the value~$+\infty$). The function
\eq{\label{Bellmanb}
\Bellb_{\Omega,f}(x) = \sup\Set{\av{f(\varphi)}{\I}}{\varphi \in \Class,\ \av{\varphi}{\I} = x, \quad \varphi \in L_{\infty}},\qquad x\in \Omega,
}
is a meaningful object for any~$f$ locally bounded from below. Note that all the Bellman functions in the paper do not depend on the choice of $I$, because the classes of functions~$\Class$ (see~\eqref{Class}) and the Bellman functions themselves are defined in terms of averages. We will often omit the symbols~$\Omega$ and~$f$ in the notation for our Bellman functions and simply write~$\Bell$ and~$\Bellb$. We also use the notation~$\OmStar = \cl\OmNull\setminus \cl\OmOne$. Of course, we expect that the functions~$\Bell$ and~$\Bellb$ coincide in reasonable situations. However, the proof of this assertion might be unexpectedly difficult. 
\begin{Rem}
Assume~\eqref{StrictConvexity} holds true. In the case~$d=2$ we also require~\eqref{ConeCondition}. Corollary~\textup{\ref{CorollaryOfDomainLemma}} says that in this case
\eq{
-\infty < \Bellb(x) \leq \Bell(x),\qquad x\in \Omega.
}
\end{Rem}
Now we pass to the examples and show how the Bellman functions above help to find sharp constants in various inequalities. We do not provide many details for the first two examples since they are discussed in the cited papers. 

\paragraph{Muckenhoupt classes.} Consider the domains~\eqref{MuckenhouptDomain} and the function~$f(x_1,x_1^{-1}) = x_1^p$, where~$p > 1$. The computation of the corresponding Bellman function~$\Bell$ leads to sharp constants in various forms of the Reverse H\"older inequality for Muckenhoupt weights. See~\cite{Vasyunin2004} and~\cite{Vasyunin2009} for details. For weak-type bounds, one makes the choice~$f(x_1,x_1^{-1}) = \chi_{[1,\infty)}(x_1)$, see~\cite{Reznikov2013}.

\paragraph{Scalar~$\BMO$ space.} Consider the domains~\eqref{BMOCase} with~$d=2$. The choice~$f(x_1,x_1^2) = e^{x_1}$ leads to the sharp John--Nirenberg inequality in integral form, see~\cite{SlavinVasyunin2011}. The function~$f(x_1,x_1^2) = |x_1|^p$ was used to obtain sharp results on the constants in the inequalities that express the equivalence of different norms on~$\BMO$, see~\cite{SlavinVasyunin2012}. For weak-type estimates, we may choose the function~$f(x_1,x_1^2) = \chi_{[0,\infty)}(x_1)$, see~\cite{VasyuninVolberg2014}. For quite general boundary values~$f$, the function~\eqref{Bellman} was computed in~\cite{ISVZ2018} (see a simpler version~\cite{IOSVZ2016} and the short report~\cite{IOSVZ2012}).

For larger~$d$ similar Bellman functions will lead to sharp inequalities for vectorial functions. %It is interesting whether the constants will change or remain the same as in the scalar case.

\paragraph{Functions of bounded mean oscillation with values in the unit sphere.}
A version of the John--Nirenberg inequality for~$\BMO$ functions between manifolds may be found in Appendix B of~\cite{BrezisNirenberg1995}. In that paper the inequality is stated in its integral form, here we prefer to work with the classical 'tail estimate' form as in~\cite{JohnNirenberg1961}. For that consider the class~$\Class$ generated by the domains~$\OmNull$ and~$\OmOne$ given in~\eqref{SphereBMOCase}. Pick some point~$x_0 \in S^{d-1}$ and some~$\delta \in (0,1)$. Consider the function
\eq{
f(x) = \chi_{[\delta,2]}(|x-x_0|),\quad x\in S^{d-1},
}
and the Bellman function~\eqref{Bellman} generated by this boundary value. The Bellman function delivers sharp estimate of the amount of points~$t\in \I$ such that
\eq{
|\varphi(t) - x_0| \geq \delta,
}
provided~$\varphi$ belongs to the~$\eps$-ball of~$\BMO(\I,S^{d-1})$ and~$\av{\varphi}{\I} = x$. If we choose~$x = x_0 |x|$, we obtain the sharp estimate for the quantity
\eq{
\frac{1}{|\I|}\Big|\Set{t\in \I}{|\varphi(t) - \av{\varphi}{\I}| \geq \tilde{\delta}}\Big|; \qquad \tilde{\delta}^2 = (1-|x|)^2+|x|\delta^2.
}
The John--Nirenberg inequality says this quantity decays exponentially as~$\eps$ decreases down to zero. The Bellman function allows to find the sharp constants in the corresponding inequality, see the forthcoming paper~\cite{Dobronravov2021}.

\paragraph{Multiplicative inequalities.} 
Consider the domains given by~\eqref{MultiDomain} and the corresponding class~$\Class$. Let~$f(t,t^2,|t|^p) = |t|^r$, where~$r\in (p,\infty)$. The corresponding Bellman function delivers sharp upper estimates for the~$L_r$-norm of a function~$\varphi$ provided its average,~$L_p$-norm, and~$\BMO$ norms are fixed. This, in particular leads to computation of sharp constants~$c_{p,r}$ in the multiplicative inequalities
\eq{
\|\varphi\|_{L_r} \leq c_{p,r}\|\varphi\|_{L_p}^{p/r}\|\varphi\|_{\BMO}^{1-p/r},
}
see~\cite{SVZ2020} and~\cite{VZZ2021}.

\section{Locally concave functions and martingales}\label{S4}
\begin{Def}
Let~$\omega \subset \R^d$. We say that a function~$G \colon\omega \to \R \cup \{+\infty\}$ is locally concave provided its restriction~$G|_\ell$ to any segment~$\ell \subset \omega$ is concave.
\end{Def}
Here and in what follows we will be using the convention that concave functions may attain infinite values, see~\cite{Rockafellar1970}. It is important that we do not allow the value~$-\infty$ (see a pathological example at the end of this section). Locally concave functions play important role in the Bellman function theory; see~\cite{Guan1998} for applications to geometry. In the definition below,~$\FixedBoundary\omega$ is the set of all points~$x\in \omega$ such that there does not exist a segment~$\ell\subset \omega$ with~$x$ lying in the interior of~$\ell$; the latter set is called the fixed boundary (because we fix the boundary values of locally concave functions on this set). The remaining part of the boundary is called the free boundary and is denoted by~$\FreeBoundary\omega$. In our usual examples of a lens~$\Om = \cl \OmNull\setminus \OmOne$, we have~$\FixedBoundary\Omega = \partial \OmNull$ and $\FreeBoundary\Omega = \partial \OmOne$. From now on we will be using this notation.

\begin{Def}
Let~$\omega \subset \R^d$\textup, let~$f\colon \FixedBoundary\omega\to \R$ be a function. By~$\LC{\omega}{f}$ we denote the class of all locally concave on~$\omega$ functions~$G$ that satisfy the boundary inequality~$G(x) \geq f(x)$ for any~$x\in \FixedBoundary\omega$.
\end{Def}
\begin{Rem}
%By Lemma~\textup{\ref{DomainLemma},} if the domains satisfy~\eqref{StrictConvexity} and in the case~$d=2$ the condition~\eqref{ConeCondition} is also fulfilled\textup, then any function~$G\in \LC{\Omega}{f}$ does not attain the value~$-\infty$. 
By Theorem~$10.1$ in~\textup{\cite{Rockafellar1970},} any function~$G\in \LC{\omega}{f}$ is continuous on the interior of~$\omega$ as a mapping with values in~$\R\cup \{+\infty\}$. 
\end{Rem}
We are ready to define the pointwise minimal locally concave function
\eq{\label{DefinitionOfMinimalLocallyConcave}
\BG_{\omega,f}(x) = \inf\set{G(x)}{G \in \LC{\omega}{f}},\quad x\in \omega,
}
(note that~$\BG_{\omega,f}(x) \in \LC{\omega}{f}$ if this function does not attain the value~$-\infty$) and state our first main theorem. The notation~$\FreeBoundary\Om \in C^2$ means that locally~$\FreeBoundary \Om$ coincides with a graph of a~$C^2$-smooth function. Recall~$\OmStar = \cl\OmNull \setminus \cl\OmOne$.
%Our second main theorem has similar formulation. However, the proof will require additional efforts.
\begin{Th}\label{IntervalTheorem}
Let the domains~$\OmNull$ and~$\OmOne$ satisfy the usual requirements~$\cl \OmOne \subset \OmNull$\textup,~\eqref{StrictConvexity}\textup,~\eqref{ConeCondition}\textup, and let also~$\FreeBoundary \Om \in C^2$. Let the function~$f$ be lower semi-continuous and bounded from below. Then\textup,
\eq{\label{CoincidenceFormula}
\Bell_{\Omega,f}(x) = \BG_{\Omega, f}(x),\qquad x\in\OmStar.
}
\end{Th}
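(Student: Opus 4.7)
The plan is to establish the two inequalities $\BG_{\Omega,f}(x) \leq \Bell_{\Omega,f}(x)$ and $\Bell_{\Omega,f}(x) \leq \BG_{\Omega,f}(x)$ separately; both require work.

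For $\BG \leq \Bell$, it suffices to show that $\Bell$ itself belongs to $\LC{\Omega}{f}$, i.e., that $\Bell$ is locally concave on $\Omega$ and satisfies $\Bell \geq f$ on $\FixedBoundary \Omega = \partial \OmNull$. The boundary condition is immediate: for $x \in \partial \OmNull$ the constant function $\varphi \equiv x$ lies in $\Class$ (since $\cl \OmOne \subset \OmNull$) and certifies $\Bell(x) \geq f(x)$. For local concavity along a segment $\ell \subset \Omega$, given $x_1, x_2 \in \ell$, candidate functions $\varphi_1, \varphi_2 \in \Class$ with $\av{\varphi_i}{\I} = x_i$, and any $\alpha \in (0,1)$, I would concatenate $\varphi_1$ and $\varphi_2$ in proportions $\alpha$ and $1-\alpha$ to obtain a function $\varphi$ on $\I$ with $\av{\varphi}{\I} = \alpha x_1 + (1-\alpha) x_2$. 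The subtle point is verifying $\av{\varphi}{\J} \notin \OmOne$ for every subinterval $\J$: if $\J$ lies entirely in one of the two pieces this follows from $\varphi_i \in \Class$, while if $\J$ straddles the junction, $\av{\varphi}{\J}$ lies on a chord joining two Bellman points of $\varphi_1$ and $\varphi_2$, and such a chord stays in $\Omega$ because $\ell \subset \Omega$ and the lens geometry. Passing to the supremum over $\varphi_1, \varphi_2$ yields the concavity of $\Bell$ along $\ell$.

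For $\Bell \leq \BG$, fix $G \in \LC{\Omega}{f}$ and $\varphi \in \Class$ with $\av{\varphi}{\I} = x$; the goal is to prove $\av{f(\varphi)}{\I} \leq G(x)$. Since $G \geq f$ on $\partial \OmNull$, this reduces to the Jensen-type inequality $\av{G(\varphi)}{\I} \leq G(\av{\varphi}{\I})$. The difficulty is that $G$ is only concave along segments contained in $\Omega$; naive dyadic martingale arguments break down whenever two consecutive averages are joined by a chord crossing $\OmOne$. The strategy is to construct a \emph{special martingale} $\{M_n\}_{n \geq 0}$ attached to $\varphi$ whose consecutive atoms are always connected by segments lying in $\Omega$. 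At each splitting step, local concavity of $G$ combined with the location of the chord yields $G(M_n) \geq \E[G(M_{n+1}) \mid \mathcal{F}_n]$, and iterating gives $G(x) \geq \av{G(\varphi)}{\I}$ in the limit. The existence of admissible splittings at each Bellman point is provided by Lemma \ref{DomainLemma} and the cone condition \eqref{ConeCondition}, and convergence of $G(M_n) \to G(\varphi)$ in average would be obtained from the continuity furnished by Rockafellar's theorem together with the $C^2$-smoothness of $\FreeBoundary \Om$.

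The main obstacle is the passage from the discrete martingale picture to the continuous function $\varphi$: one must align the atoms of the special martingale with the actual pointwise values of $\varphi$ so that the terminal distribution really matches $\varphi$ and not just its distribution of averages. In the two-dimensional unbounded setting of~\cite{StolyarovZatitskiy2016} this alignment was effected through the monotonic rearrangement, which orders the values of $\varphi$ in a way compatible with the martingale; but monotonic rearrangement is unavailable in higher dimensions or on bounded (non-simply-connected) domains. Following~\cite{StolyarovZatitskiy2021}, I would substitute the \emph{homogenization} procedure, which produces a function $\tilde{\varphi}$ with the same average as $\varphi$ and the same value of $\av{f(\varphi)}{\I}$ (up to arbitrarily small loss) but with a layered combinatorial structure fitted to the chord splittings of the special martingale. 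Lower semi-continuity and lower boundedness of $f$ would enter at the stage of pushing the Jensen inequality through the approximation provided by homogenization.
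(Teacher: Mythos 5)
Your proposed proof of the easy-looking inequality $\BG \leq \Bell$ contains the central gap. You claim that if $\J$ straddles the junction of the concatenated $\varphi_1$ and $\varphi_2$, then $\av{\varphi}{\J}$ stays out of $\OmOne$ ``because $\ell \subset \Omega$ and the lens geometry.'' This is false in the generality of the theorem. The point $\av{\varphi}{\J}$ is a convex combination of a suffix Bellman point of $\varphi_1$ and a prefix Bellman point of $\varphi_2$; these points trace curves inside $\Omega$ that may wind around opposite sides of the hole $\OmOne$, in which case the connecting chord passes through $\OmOne$ and $\varphi \notin \Class$. In the classical one-dimensional or unbounded-lens settings this is repaired by the monotonic rearrangement, which the paper explicitly notes is unavailable here (Section~\ref{S5}). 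The paper circumvents the concatenation issue entirely: it introduces the circle class $\ClassC$, proves the equality $\BellC_{\Omega,f} = \BG_{\OmStar,f}$ (Theorem~\ref{CircleTheorem}) using the gluing lemma~\ref{GluingLemma} --- which builds a \emph{periodic} function from a simple $\OmStar$-martingale via Theorem~\ref{SZGluingTheorem} from~\cite{StolyarovZatitskiy2021} --- and then combines this with $\BG_{\Om,f} = \BG_{\OmStar,f}$ (Proposition~\ref{CoincidenceProposition}) and the trivial $\BellC \leq \Bell$ (formula~\eqref{Monotonicity}) to get $\BG_{\Om,f} \leq \Bell_{\Om,f}$ on $\OmStar$. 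Note that the gluing/homogenization machinery belongs to \emph{this} direction, not the one where you placed it.

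For the other inequality $\Bell \leq \BG$ your outline is closer to the paper's, but you miss where the hypothesis $\FreeBoundary\Om \in C^2$ actually enters. The splitting lemma~\ref{SplittingLemma} constructs the martingale $M_n = \E(\varphi \mid \F_n)$ directly from $\varphi$ (so no ``alignment'' problem arises), but it only produces an $\tOm$-martingale in a slightly \emph{enlarged} lens $\tOm$ with $\cl\tOmOne \subset \OmOne$; this yields $\av{f(\varphi)}{\I} \leq \BG_{\tOm,f}(x)$, not $\leq \BG_{\Om,f}(x)$. To close the gap one needs the extension theorem~\ref{ExtensionGen}: $\BG_{\Om,f}(x) = \inf_{\tOm}\BG_{\tOm,f}(x)$, whose proof is the bulk of Section~\ref{S8} and is precisely where the $C^2$ smoothness of $\FreeBoundary\Om$ (and the strong-concavity perturbation of $\BG$) is used. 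Your remark about $C^2$ smoothness granting ``convergence of $G(M_n) \to G(\varphi)$ in average'' points at the wrong step.
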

This theorem generalizes, up to minor modifications, the main result of~\cite{StolyarovZatitskiy2016}; in that paper we had~$d=2$ and unbounded~$\OmOne$ (and, therefore, unbounded~$\OmNull$). The methods of~\cite{StolyarovZatitskiy2016} relied upon the notion of monotonic (non-decreasing) rearrangement, which, seemingly, does not exist in the case where~$\Omega$ is not simply connected. A good replacement for monotonic rearrangements was found in~\cite{StolyarovZatitskiy2021}. The disadvantage of this new method is that it does not allow to work with~$x\in \FreeBoundary\Omega$. The main theorem in~\cite{StolyarovZatitskiy2016} states identity~\eqref{CoincidenceFormula} for all~$x\in \Omega$. Seemingly, in the larger generality Theorem~\ref{IntervalTheorem} is also true for~$x\in \Omega$, however, the methods from~\cite{StolyarovZatitskiy2021} do not allow to prove it (see Section~\ref{S9} below for an explanation). 

We need to survey the notions from~\cite{StolyarovZatitskiy2016} we will be using. We will be working with discrete time martingales adapted to a filtration~$\F = \{\F_n\}_n$. We refer the reader to~\cite{Shiryaev2019} for the general martingale theory and present here the simplified definitions we will use. 

By a filtration we mean a sequence of increasing set algebras (we consider finite algebras only), that is if~$A \in \F_n$, then~$A \in \F_{n+1}$ as well. A sequence~$\{M_n\}_n$ of random variables taking values in some linear space is called a martingale adapted to~$\F$~provided, first, each~$M_n$ is~$\F_n$-measurable, and second, for each~$n$ we have~$M_n = \E(M_{n+1}\mid \F_n)$. Note that since our algebras are simple, we may freely work in infinite dimensional spaces (in the case of general martingales, there are difficulties with the definition of conditional expectation, see Section~$1.3$ in~\cite{HNVW2016}). All martingales we will be working with have the limit value~$M_\infty\in L_1$, a random variable that generates the martingale:
\eq{
M_n = \E(M_\infty\mid \F_n),\qquad n \in \mathbb{N}\cup \{0\}.
}
Here we should take care,~$M_\infty$ should attain values in a finite dimensional space since we wish to omit the theory of integration of functions taking values in infinite-dimensional spaces. 

The main property of~$M_\infty$ may be restated: for any atom~$a\in \F_n$ (by an atom of a set algebra~$\F$ we mean a set~$a\in \F$ of positive measure that is minimal by inclusion), one may restore the value of~$M_n$ on this atom by the formula
\eq{
M_n(a) = \frac{1}{P(a)}\int\limits_{a} M_{\infty}.
}
We cite an important definition from~\cite{StolyarovZatitskiy2016}. See Fig.~\ref{ExampleOmMart} for an example.
\begin{Def}\label{OmegaMart}
Let~$\omega \subset \R^d$. An~$\R^d$-valued martingale~$M = \{M_n\}_n$ adapted to~$\F = \{\F_n\}$ is called an~$\omega$-martingale\textup, provided
\begin{enumerate}[1\textup)]
\item the algebra~$\F_0$ is trivial\textup, i.\,e.\textup, consists of the whole probability space and the empty set\textup;
\item there exists a random variable~$M_\infty$ with the values in $\FixedBoundary \omega$ such that~$M_n\to M_{\infty}$ in~$L_1$ and almost everywhere \textup(in particular\textup,~$M_{\infty}$ is summable itself\textup)\textup;
\item for any atom~$a \in \F_n$ there exists a convex set~$C_a\subset \omega$ such that~$M_{n+1}|_a$ lies in~$C_a$ almost surely.
\end{enumerate}
\end{Def}
Sometimes, we will need to use a slightly modified notion of an~$(\omega, \mathfrak{D})$-martingale introduced in~\cite{StolyarovZatitskiy2021}.
\begin{Def}\label{OmegaDeltaMart}
Let~$\omega\subset \R^d$ and let~$\mathfrak{D} \subset \FixedBoundary\omega$. An~$\omega$-martingale~$M$ is called an~$(\omega,\mathfrak{D})$-martingale\textup, provided~$M_\infty \in \mathfrak{D}$ almost surely.
\end{Def}
\begin{Rem}\label{InfDimRem}
We may also consider~$\omega$ or~$(\omega, \mathfrak{D})$-martingales for infinite-dimensional domains~$\omega$. However\textup, in this case we require that~$M_\infty$ attains values in the intersection of~$\FixedBoundary \omega$ with a finite dimensional space. For example\textup, this happens if~$M_\infty$ attains finitely many values.
\end{Rem} 
\begin{figure}\label{ExampleOmMart}
\begin{center}
   \includegraphics[width = 0.35 \textwidth]{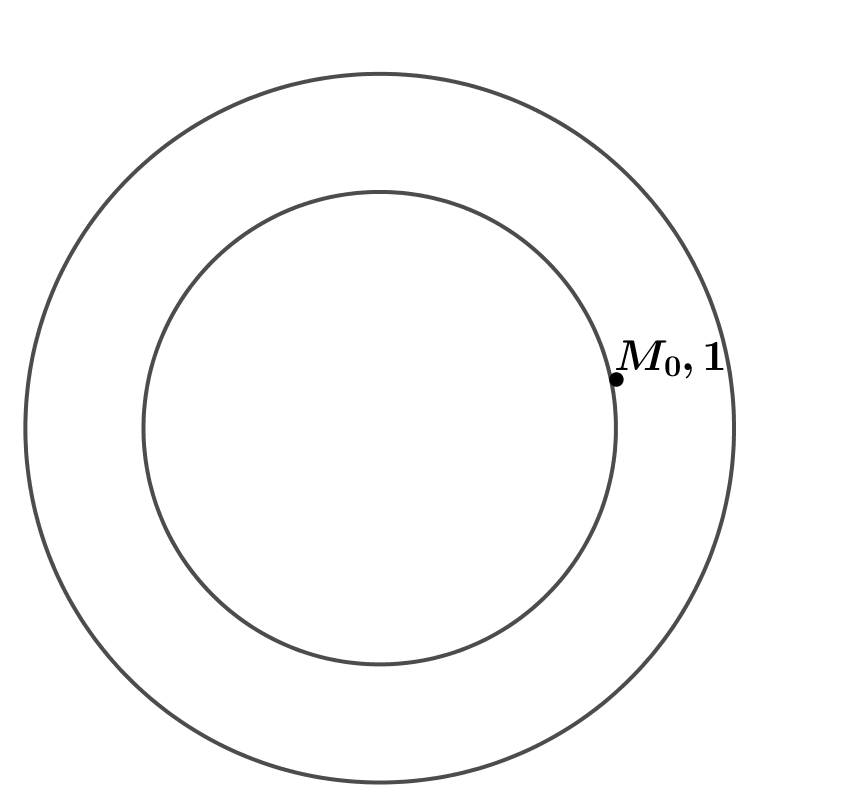}\hspace{-20pt}
   \includegraphics[width = 0.35 \textwidth]{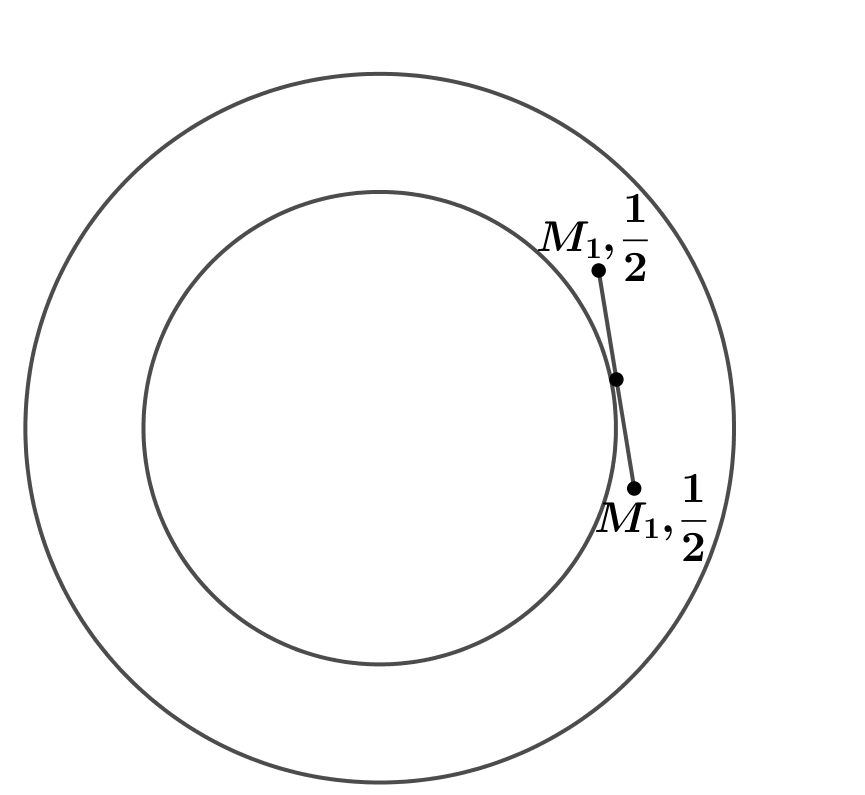}\hspace{-20pt}
   \includegraphics[width = 0.35 \textwidth]{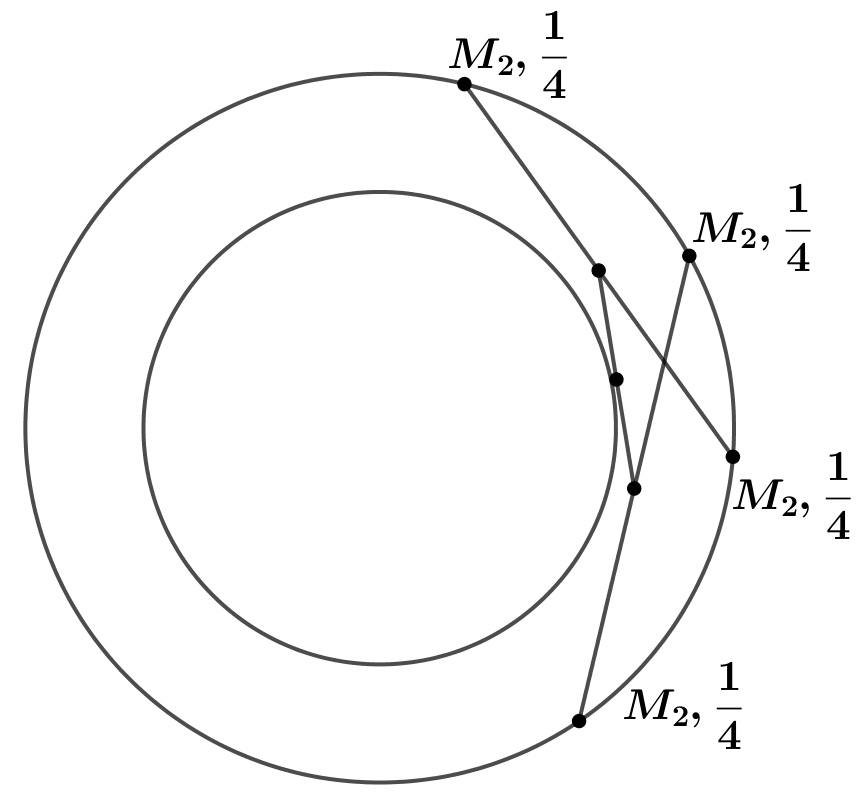}
\end{center}
    \caption{An example of an~$\omega$-martingale where~$\omega$ is given in~\eqref{SphereBMOCase}; the numbers assigned to points denote the probability of the specific value.}
\end{figure}

Consider two other Bellman functions on~$\omega$: the first one is defined for~$f\colon \FixedBoundary\omega \to \R$ measurable and bounded from below,
\eq{
\BM(x) = \sup\Set{\E f(M_{\infty})}{M_0 = x,\ M\ \text{is an~$\omega$-martingale}},\quad x\in \omega,
}
and the second one is defined for arbitrary locally bounded from below measurable~$f$:
\eq{
\BMb(x) = \sup\Set{\E f(M_{\infty})}{M_0 = x,\ M\ \text{is an~$\omega$-martingale},\ M_{\infty}\in L_{\infty}},\quad x\in \omega.
} 
Recall that a martingale~$M$ is called simple if there exists a natural number~$N$ such that~$M_k = M_{N}$ for all~$k \geq N$. 
We will use Theorem~$2.21$ from~\cite{StolyarovZatitskiy2016}. It uses the notion of a \emph{strongly martingale connected domain}. A set~$\omega \subset \R^d$ is called strongly martingale connected if for every~$x\in \omega$ there exists a simple~$\omega$-martingale~$M$ that starts at~$x$, that is~$M_0 = x$. 
\begin{Le}\label{Le52}
Assume~\eqref{StrictConvexity}. The domains~$\Omega$ and~$\Omega^*$ are strongly martingale connected if either~$d \geq 3$ or~\eqref{ConeCondition} holds true.
\end{Le}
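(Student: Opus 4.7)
My plan is to build the required simple martingale explicitly from the segment produced by Lemma~\ref{DomainLemma}, using only one non-trivial step. The hypothesis ``$d\geq 3$ or~\eqref{ConeCondition}'' is needed precisely to invoke Lemma~\ref{DomainLemma}, and the strict convexity assumption~\eqref{StrictConvexity} is inherited from there.

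First, I would dispose of the trivial case. If $x \in \FixedBoundary\Omega = \partial \OmNull$, then the constant sequence $M_n \equiv x$, adapted to the trivial filtration $\F_n = \F_0$ for all $n$, satisfies all three items of Definition~\ref{OmegaMart} vacuously and is a simple $\Omega$-martingale starting at $x$; the same is true for $\Omega^*$ since $\FixedBoundary \OmStar = \partial\OmNull$ as well.

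For the main case, fix $x \in \Om$ with $x \notin \partial\OmNull$. Apply Lemma~\ref{DomainLemma} to obtain a segment $\ell_x \subset \Om$ passing through $x$ whose endpoints $a,b$ lie on $\partial\OmNull$. Since $x$ is interior to $\ell_x$, we can write $x = \alpha a + \beta b$ with $\alpha,\beta > 0$ and $\alpha + \beta = 1$. Now define a filtration $\F$ with $\F_0 = \{\emptyset,\Omega_{\mathrm{prob}}\}$ trivial and $\F_n$ for $n \geq 1$ equal to the algebra generated by a partition of the probability space into two atoms $E_a, E_b$ of measures $\beta$ and $\alpha$, respectively. Set $M_0 = x$, and for $n \geq 1$ let $M_n = a$ on $E_a$ and $M_n = b$ on $E_b$. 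Then $\E(M_1\mid\F_0) = \beta a + \alpha b = x = M_0$ and $M_n = M_1$ for all $n\geq 1$, so $M$ is a simple martingale. Conditions~(1) and~(2) of Definition~\ref{OmegaMart} are immediate, with $M_\infty = M_1 \in \{a,b\} \subset \partial\OmNull = \FixedBoundary\Omega$. For condition~(3), the unique atom of $\F_0$ requires a convex set $C\subset \Om$ containing the (almost sure) values of $M_1$, namely $a$ and $b$; take $C = \ell_x$, which is convex and contained in $\Om$ by construction. All subsequent atoms are singletons on which $M_{n+1}$ is constant, so the condition is automatic.

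The argument for $\OmStar$ proceeds identically, except that for $x \in \OmStar \setminus \partial\OmNull$ we must choose the segment inside $\OmStar$ rather than $\Om$. Here we invoke Remark~\ref{DomainRemark}: since $\OmStar = \cl\OmNull\setminus\cl\OmOne$ excludes $\cl\OmOne$, every point $x \in \OmStar \setminus \partial\OmNull$ satisfies $x \notin \cl\OmOne$, and the remark upgrades $\ell_x$ to a segment with $\ell_x \cap \cl\OmOne = \emptyset$, hence $\ell_x \subset \OmStar$. The rest of the construction carries over verbatim. I do not anticipate a serious obstacle: the whole content of the lemma is the existence of the segment $\ell_x$, which has already been supplied by Lemma~\ref{DomainLemma} and Remark~\ref{DomainRemark}; the present lemma merely packages it as a martingale.
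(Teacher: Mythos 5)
Your proof is essentially the paper's own argument: split at a single step along the segment from Lemma~\ref{DomainLemma} (resp.\ Remark~\ref{DomainRemark} for $\OmStar$), with the constant martingale handling the fixed boundary. One small clerical slip: you assign $P(E_a)=\beta$ and $P(E_b)=\alpha$, which gives $\E(M_1\mid\F_0)=\beta a+\alpha b$, whereas you need $\alpha a+\beta b=x$; the atom carrying value $a$ should have measure $\alpha$, and likewise for $b$. With that trivial swap, the proposal matches the paper exactly.
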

\begin{proof}
Consider the case of the domain~$\Omega$ first. Let~$x\in \Omega$, we wish to construct a simple~$\Omega$-martingale starting at~$x$. If~$x\in \FixedBoundary \Omega$, then the desired martingale is constantly equal~$x$. So, we may assume~$x\notin \FixedBoundary\Omega$. By Lemma~\ref{DomainLemma}, there exists a segment~$\ell_x\subset \Om$ with the endpoints~$A$ and~$B$ lying on~$\FixedBoundary\Omega$, passing through~$x$. In other words,
\eq{
x = \alpha A + \beta B,\qquad \alpha + \beta = 1,\ \text{and}\ \alpha, \beta \geq 0.
}
Let us construct the martingale~$M$ by the formula,
\eq{
M_0 = x,\quad M_1 = \begin{cases}
A,\quad \text{with probability}\ \alpha;\\
B,\quad \text{with probability}\ \beta,
\end{cases}
M_n = M_1,\quad n \geq 1.
}
In other words,~$M$ is a martingale that starts at~$x$, splits into~$A$ and~$B$, and stops there. Since~$\ell_x \subset \Omega$,~$M$ is the desired simple~$\Omega$-martingale. 

The reasoning for the domain~$\Omega^*$ is completely similar. One relies upon Remark~\ref{DomainRemark} instead of Lemma~\ref{DomainLemma} in this case. 
\end{proof}

%We cite Theorem~$2.21$ from~\cite{StolyarovZatitskiy2016} (see Remark~$2.22$ therein as well).
\begin{Th}[Theorem~$2.21$ in~\cite{StolyarovZatitskiy2016}]\label{Th53}
Let~$\omega \subset \R^d$ be a strongly martingale connected domain. Let $f\colon \FixedBoundary\omega\to \R$ be a bounded from below function such that~$\BG_{w,f}$ is continuous at every point of the fixed boundary. Then\textup,~$\BG(x) = \BM(x)$ for all~$x\in \omega$. 
\end{Th}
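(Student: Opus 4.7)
The plan is to establish the two inequalities $\BM(x) \leq \BG(x)$ and $\BG(x) \leq \BM(x)$ separately. The first is a concavity-plus-Jensen argument carried along a martingale trajectory; the second amounts to checking that $\BM$ itself belongs to $\LC{\omega}{f}$ and so majorizes $\BG$ from above.

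For $\BM \leq \BG$, I fix $x \in \omega$, an $\omega$-martingale $M$ with $M_0 = x$, and an arbitrary $G \in \LC{\omega}{f}$. At each atom $a \in \F_n$, part (3) of Definition~\ref{OmegaMart} furnishes a convex set $C_a \subset \omega$ containing $M_{n+1}|_a$ almost surely; concavity of $G$ on $C_a$ and Jensen's inequality yield $G(M_n(a)) \geq \E[G(M_{n+1}) \mid \F_n](a)$. Taking expectations and iterating, $\E G(M_n) \leq G(x)$ for every~$n$. Specializing to $G = \BG$: by hypothesis $\BG$ is continuous at each point of $\FixedBoundary\omega$, and it is continuous on $\interior\omega$ by Rockafellar's theorem cited earlier, so the almost-sure convergence $M_n \to M_\infty$ guaranteed by Definition~\ref{OmegaMart}(2) upgrades to $\BG(M_n) \to \BG(M_\infty)$ almost surely. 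Because $\BG \geq \inf f > -\infty$, Fatou's lemma applied to the non-negative sequence $\BG(M_n) - \inf f$ gives
\[
\E \BG(M_\infty) \leq \liminf_n \E \BG(M_n) \leq \BG(x).
\]
Since $M_\infty \in \FixedBoundary\omega$ and $\BG \geq f$ there, we conclude $\E f(M_\infty) \leq \BG(x)$; taking the supremum over $M$ proves $\BM(x) \leq \BG(x)$.

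For $\BG \leq \BM$, I verify that $\BM$ itself belongs to $\LC{\omega}{f}$, so that $\BM$ is an admissible competitor in the infimum defining $\BG$. Finiteness ($\BM > -\infty$) follows from strong martingale connectedness: every $x \in \omega$ admits an $\omega$-martingale, whose expectation $\E f(M_\infty) \geq \inf f$ bounds $\BM(x)$ from below. The boundary inequality $\BM(x) \geq f(x)$ for $x \in \FixedBoundary\omega$ is witnessed by the constant martingale $M_n \equiv x$, which is trivially an $\omega$-martingale. Local concavity follows from a concatenation argument: given $[y,z] \subset \omega$, a point $x = \alpha y + (1-\alpha) z$ with $\alpha \in [0,1]$, and $\omega$-martingales $M^y$, $M^z$ starting at~$y$ and~$z$, build an $\omega$-martingale starting at $x$ by first splitting into $y$ with probability $\alpha$ and $z$ with probability $1-\alpha$ (the root convex set $C_0 = [y,z]$ lies in $\omega$ by assumption), and then continuing as $M^y$ or $M^z$ on the respective branches. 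Passing to the supremum yields $\BM(x) \geq \alpha\BM(y) + (1-\alpha)\BM(z)$.

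The delicate step is the limit passage in the first inequality. For a simple $\omega$-martingale the bound $\E \BG(M_\infty) \leq \BG(x)$ is immediate because $M_\infty = M_N$ for some~$N$, but a general $\omega$-martingale may only reach $\FixedBoundary\omega$ in the limit; here the continuity hypothesis on $\BG$ at the fixed boundary, combined with Fatou's lemma (applicable thanks to the lower bound $\BG \geq \inf f$), is what permits the interchange and closes the argument. Everything else in the proof is essentially bookkeeping around Jensen's inequality and around gluing of martingales along segments.
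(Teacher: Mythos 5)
Your proof is correct and follows the approach the paper cites (Theorem~$2.21$ of the earlier work): the two-sided comparison via Jensen and Fatou for~$\BM\le\BG$, and the concatenation argument showing~$\BM\in\LC{\omega}{f}$ for the reverse; this is the natural extension of the paper's proof of Lemma~\ref{SimpleMartingale} to general (non-simple) $\omega$-martingales, with the boundary-continuity hypothesis and Fatou supplying the missing limit passage.

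One small gap to close: before you specialize the Jensen bound~$\E G(M_n)\le G(x)$ to~$G=\BG$ and invoke Fatou, you must know that~$\BG>-\infty$ (so that~$\BG\in\LC{\omega}{f}$) and that~$\BG\ge\inf f$ on all of~$\omega$. Neither is automatic from the definition of~$\BG$, since the constraint~$G\ge f$ on competitors only holds on~$\FixedBoundary\omega$ and says nothing about the size of~$G$ in the interior. Both facts follow from the Jensen bound you have already derived, applied to an arbitrary~$G\in\LC{\omega}{f}$ together with a simple $\omega$-martingale~$M$ started at~$x$ (which exists by strong martingale connectedness): one gets~$G(x)\ge\E G(M_N)\ge\E f(M_N)\ge\inf f$, hence~$\BG(x)\ge\inf f>-\infty$. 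Inserting this observation before the Fatou step removes the apparent circularity and completes the argument.
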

Remark~$2.22$ of the same paper says that if~$f$ is only locally bounded from below,~$\BG_{\omega,f}$ is continuous at the points of the fixed boundary, then~$\BG=\BMb$. We will need a slightly stronger statement. Let us introduce yet another Bellman function
\eq{
\BMs(x) = \sup\Set{\E f(M_{\infty})}{M_0 = x,\ M\ \text{is a simple~$\Omega$-martingale}},\quad x\in \omega.
} 
Clearly,~$\BMs \leq \BMb \leq \BM$. We claim that this chain of inequalities often turns into a chain of equalities.
\begin{Le}\label{SimpleMartingale}
Let~$\omega \subset \R^d$ be a strongly martingale connected domain. Let~$f\colon \FixedBoundary\omega\to \R$ be any measurable function. Then\textup,~$\BG_{w,f}(x) = \BMs_{w,f}(x)$ for any~$x\in \omega$.
\end{Le}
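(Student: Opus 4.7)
The plan is to prove the two inequalities $\BMs(x) \leq \BG(x)$ and $\BG(x) \leq \BMs(x)$ separately, the first via a Jensen-type iteration along a simple martingale, and the second by verifying that $\BMs$ itself belongs to the class $\LC{\omega}{f}$ and then invoking the definition of $\BG$ as a pointwise infimum.

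For the direction $\BMs(x) \leq \BG(x)$, I would fix an arbitrary $G \in \LC{\omega}{f}$ and a simple $\omega$-martingale $M$ starting at $x$. Since $M$ is simple, there is some $N$ past which $M_n = M_\infty \in \FixedBoundary\omega$. I plan to prove the pointwise inequality $G(M_n) \geq \E(G(M_{n+1}) \mid \F_n)$ atom by atom: on an atom $a \in \F_n$, the conditional distribution of $M_{n+1}$ is finitely supported in a convex set $C_a \subset \omega$ and has barycenter $M_n(a)$, so a standard reduction (writing a discrete convex combination as an iterated sequence of two-point combinations, each lying on a segment inside $C_a \subset \omega$) together with the concavity of $G$ along each such segment yields the required inequality. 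Iterating from $n = N$ down to $n = 0$ and using $G \geq f$ on $\FixedBoundary\omega$, I obtain $G(x) \geq \E f(M_\infty)$; taking supremum over $M$ and infimum over $G$ delivers the inequality.

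For the reverse direction I would verify that $\BMs$ itself lies in $\LC{\omega}{f}$. The boundary condition $\BMs(x) \geq f(x)$ on $\FixedBoundary\omega$ is immediate, since the constant sequence $M_n \equiv x$ is an admissible simple $\omega$-martingale there. To check local concavity along a segment $[A, B] \subset \omega$ at a point $x = \alpha A + \beta B$, I fix $\eps > 0$ and pick simple $\omega$-martingales $M^A$ and $M^B$ starting at $A$ and $B$ whose expected terminal values approximate $\BMs(A)$ and $\BMs(B)$ to within $\eps$ (with the obvious adaptation when either value equals $+\infty$, where I replace the approximate equality by the requirement that $\E f(M^A_\infty) \geq 1/\eps$). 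Concatenating---split $x$ into $A$ with probability $\alpha$ and $B$ with probability $\beta$ on the first step, which is admissible because $[A, B] \subset \omega$, and then continue with $M^A$ or $M^B$ accordingly---produces a simple $\omega$-martingale starting at $x$ with expected terminal value at least $\alpha \BMs(A) + \beta \BMs(B) - \eps$. Letting $\eps \to 0$ gives the desired concavity inequality.

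The argument is mostly routine bookkeeping; the step I expect to be most delicate is ensuring that $\BMs$ is a genuine member of $\LC{\omega}{f}$, which forbids the value $-\infty$. The strong martingale connectedness hypothesis on $\omega$ is exactly what is needed here: it guarantees at every $x \in \omega$ the existence of some simple $\omega$-martingale starting at $x$, which, since $f$ is real valued and $M_\infty$ takes finitely many values, forces $\BMs(x) > -\infty$. Handling the $+\infty$ case uniformly in the concavity argument (where one needs the concatenation to deliver arbitrarily large expected values when $\BMs(A)$ or $\BMs(B)$ is infinite) is also a minor point I plan to address explicitly rather than leave implicit.
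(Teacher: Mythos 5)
Your argument is correct and follows essentially the same route as the paper's proof: show $\BMs\in\LC{\omega}{f}$ (using strong martingale connectedness to rule out $-\infty$, and a concatenation argument for local concavity) to get $\BG\leq\BMs$, and a Jensen-type iteration along a simple martingale (with $\BMs$'s simplicity removing any need for a limiting step) to get $\BMs\leq\BG$. The paper merely cites Lemma~2.17 and Lemma~2.10 of~\cite{StolyarovZatitskiy2016} for the two pieces you spell out explicitly.
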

\begin{proof}
The proof is a simplification of the proof of Theorem~$2.22$ in~\cite{StolyarovZatitskiy2016}; we provide a comment about simplifications. It suffices to prove the inequalities~$\BG(x) \leq \BMs(x)$ and~$\BMs(x) \leq \BG(x)$ for any~$x\in \omega$. 

To show the first inequality, it suffices to check the inclusion~$\BMs \in \LC{\omega}{f}$ and use the definition of~$\BG$. It is clear that~$\BMs$ satisfies the boundary condition. By the requirement that~$\omega$ is strongly martingale connected,~$\BMs$ does not attain the value~$-\infty$. The local concavity may be verified similar to the proof of Lemma~$2.17$ in~\cite{StolyarovZatitskiy2016}.

To show the reverse inequality~$\BMs(x) \leq \BG(x)$, it suffices to prove that for any simple~$\omega$-martingale~$M$ with~$M_0 = x$ and any~$G\in \LC{\omega}{f}$, the inequality
\eq{
G(M_0) \geq \E G(M_\infty) \geq \E f(M_{\infty})
}
holds true. This inequality follows from Lemma~$2.10$ in~\cite{StolyarovZatitskiy2016} that says that the quantity~$\E G(M_n)$ is non-increasing in this case; note that the assumption that~$M$ is simple cancels the need for the limit argument (compare with the proof of Lemma~2.19 in~\cite{StolyarovZatitskiy2016}).
\end{proof}
A simple modification of the proof above allows to prove a version of Theorem~\ref{Th53}.
\begin{Th}\label{Th53bis}
Let~$\omega \subset \R^d$ be a strongly martingale connected domain. Let~$f\colon \FixedBoundary\omega\to \R$ be a bounded from below function such that~$\BG_{w,f}$ is lower semi-continuous at every point of the fixed boundary. Then\textup,~$\BG(x) = \BM(x)$ for all~$x\in \omega$. 
\end{Th}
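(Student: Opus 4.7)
My plan is to combine Lemma~\ref{SimpleMartingale} with a Fatou-type limit argument. The inequality $\BG(x) \leq \BM(x)$ is for free: Lemma~\ref{SimpleMartingale} yields $\BG = \BMs$ under the sole hypothesis that $\omega$ is strongly martingale connected, and every simple $\omega$-martingale is in particular an $\omega$-martingale, so $\BMs \leq \BM$. It therefore suffices to prove the reverse inequality $\BM(x) \leq \BG(x)$ for every $x \in \omega$.

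To obtain this, I will apply the standard supermartingale argument with the distinguished choice $G = \BG$. The function $\BG$ itself belongs to $\LC{\omega}{f}$: as a pointwise infimum of locally concave functions it is locally concave where finite, it majorises $f$ on $\FixedBoundary \omega$ by construction, and the identity $\BG = \BMs$ together with the lower bound $f \geq -C$ ensures $\BG \geq -C$ on $\omega$, so $\BG$ never attains $-\infty$. Fix an arbitrary $\omega$-martingale $M$ with $M_0 = x$. By Lemma $2.10$ of~\cite{StolyarovZatitskiy2016} the sequence $\E \BG(M_n)$ is non-increasing, whence $\BG(x) \geq \E \BG(M_n)$ for every $n \geq 0$. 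Fatou's lemma applies to the non-negative variables $\BG(M_n) + C$, and combined with the almost sure convergence $M_n \to M_\infty \in \FixedBoundary \omega$ and the assumed lower semi-continuity of $\BG$ at every boundary point it gives
\[
\BG(x) \geq \liminf_{n \to \infty} \E \BG(M_n) \geq \E \liminf_{n \to \infty} \BG(M_n) \geq \E \BG(M_\infty) \geq \E f(M_\infty).
\]
Taking the supremum over all $\omega$-martingales $M$ starting at $x$ then yields $\BG(x) \geq \BM(x)$, as required.

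The essential obstacle compared with the simple-martingale case treated in Lemma~\ref{SimpleMartingale} is precisely this limit transition, which the simplicity hypothesis allowed one to bypass. Without some one-sided regularity of $\BG$ at the fixed boundary there is no reason for $\liminf_n \BG(M_n)$ to dominate $\BG(M_\infty)$ almost surely; the lower semi-continuity hypothesis on $\BG$ supplies exactly this, while the lower bound on $f$ plays the complementary role of making Fatou's lemma applicable. Note that one cannot in general hope to replace $\BG$ in the Fatou step by an arbitrary $G \in \LC{\omega}{f}$, since the lower semi-continuity assumption is placed on $\BG$ rather than on each competitor, which is why it is important to apply the estimate directly to $G = \BG$.
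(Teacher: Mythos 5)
Your proof is correct and fills in exactly the details the paper leaves implicit: the paper dismisses Theorem~\ref{Th53bis} with the remark that a ``simple modification of the proof above'' suffices, and your argument is precisely the intended modification. The inequality $\BG \leq \BM$ via $\BG = \BMs \leq \BM$ (Lemma~\ref{SimpleMartingale}) is the right cheap half; and for the reverse direction, applying the supermartingale estimate to the specific function $G = \BG$ (rather than an arbitrary competitor), invoking Fatou on the uniformly lower-bounded variables $\BG(M_n) + C$, and then using the assumed lower semi-continuity of $\BG$ on the fixed boundary to get $\liminf_n \BG(M_n) \geq \BG(M_\infty)$ almost surely is exactly the limit transition that replaces the continuity used in Theorem~\ref{Th53}. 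Your observations that $\BG \geq -C$ follows from $\BG = \BMs$ together with $f \geq -C$, and that the Fatou step cannot be run with an arbitrary $G \in \LC{\omega}{f}$ because the semi-continuity hypothesis is only imposed on $\BG$, are both the right cautions; no gap.
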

We will also need a technical statement in the spirit of Proposition~$2.7$ in~\cite{StolyarovZatitskiy2016}. The proof is also very similar, so, we omit it.
\begin{St}\label{LowerSemicont}
Let~$\omega \subset \R^d$\textup, let~$x\in \FixedBoundary\omega$. Suppose there exists a closed ball~$B_r(x)$ such that~$B_r(x)\cap \omega$ is a closed strictly convex set. Suppose~$\BG_{\omega,f}$ nowhere equals~$+\infty$. Then\textup,~$\BG$ is lower semi-continuous at the point~$x$ provided~$f$ is.
\end{St}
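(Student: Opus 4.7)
The plan is to argue by contradiction. I will set $L := \liminf_{y \to x,\, y \in \omega} \BG(y)$ and assume $L < \BG(x)$; the goal will be to build a function $\tilde{G} \in \LC{\omega}{f}$ with $\tilde{G}(x) < \BG(x)$, contradicting the minimality of $\BG$. The candidate will be the pointwise surgery $\tilde{G}(z) := \BG(z)$ for $z \neq x$ and $\tilde{G}(x) := L$. Two things then have to be verified: the boundary inequality $L \geq f(x)$ (the substantive step, where strict convexity enters), and the local concavity of $\tilde{G}$ (a soft consequence of $x \in \FixedBoundary\omega$).

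For the bound $L \geq f(x)$, I intend to produce, for each interior point $y$ of $K := B_r(x) \cap \omega$ sufficiently close to $x$, a segment $[a_y, b_y] \subset K$ through $y$ whose endpoints lie on $\FixedBoundary\omega$ and converge to $x$ as $y \to x$. The construction will be as follows: fix a supporting hyperplane $H$ of $K$ at $x$, which by strict convexity satisfies $H \cap K = \{x\}$; fix a nonzero vector $v$ parallel to $H$; and take $[a_y, b_y] := (y + \R v) \cap K$. Compactness of $K$ together with $H \cap K = \{x\}$ forces $a_y, b_y \to x$. For $y$ close to $x$ the endpoints therefore lie in the open ball $B_r(x)$, hence on $\partial\omega$; and strict convexity of $K$ makes them extreme points of $K$, so they cannot lie in the relative interior of any segment of $\omega$, i.e., $a_y, b_y \in \FixedBoundary\omega$. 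Local concavity of $\BG$ on $[a_y, b_y] \subset \omega$ combined with $\BG \geq f$ on $\FixedBoundary\omega$ then yields $\BG(y) \geq \lambda f(a_y) + (1-\lambda) f(b_y)$ for the appropriate barycentric $\lambda$; lower semi-continuity of $f$ at $x$ upgrades this to $\BG(y) \geq f(x) - \eps$ for $y$ near $x$, giving $L \geq f(x)$. Boundary points $y \in \partial K$ near $x$ already belong to $\FixedBoundary\omega$, so $\BG(y) \geq f(y)$ handles them directly.

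To finish, I will verify that $\tilde{G} \in \LC{\omega}{f}$ and conclude. The value $+\infty$ is never attained by $\tilde{G}$ because $\BG$ does not attain it and $\tilde{G}(x) = L \leq \BG(x)$. The boundary inequality $\tilde{G} \geq f$ on $\FixedBoundary\omega$ is immediate from $L \geq f(x)$ at $x$ and from $\BG \geq f$ elsewhere. Local concavity will be checked segment by segment: if $x \notin \ell$, then $\tilde{G}|_\ell = \BG|_\ell$ is concave; if $x \in \ell$, then $x$ must be an endpoint of $\ell$, because $x \in \FixedBoundary\omega$ precludes $x$ from being in the relative interior of any segment of $\omega$, and lowering the value of a concave function at a single endpoint only weakens the defining inequality. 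Hence $\tilde{G} \in \LC{\omega}{f}$ with $\tilde{G}(x) = L < \BG(x)$, the sought contradiction. The hardest step is the construction of the family $\ell_y$: having its endpoints land on $\FixedBoundary\omega$ (rather than on $\partial B_r(x)$) and shrink to $x$ with $y$ is precisely what the hypothesis that $B_r(x) \cap \omega$ be closed strictly convex is engineered to deliver.
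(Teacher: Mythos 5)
Your proof is correct, and it follows the standard and almost certainly the intended approach (the paper omits the argument, referring to the analogous Proposition~2.7 in~\cite{StolyarovZatitskiy2016}): one lowers the value of~$\BG$ at the single point~$x$ and checks that the result is still in~$\LC{\omega}{f}$, which forces~$\BG(x)\leq L$. The two technical ingredients are handled correctly: the chords~$(y+\R v)\cap K$ taken parallel to a supporting hyperplane of the strictly convex cap~$K=B_r(x)\cap\omega$ at~$x$ do shrink to~$\{x\}$ (any limit of their endpoints would lie in~$H\cap K=\{x\}$), and the endpoints land in~$\FixedBoundary\omega$ because an extreme point of~$K$ lying strictly inside~$B_r(x)$ cannot sit in the relative interior of any segment of~$\omega$.
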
 

Surprisingly, the assertion of Lemma~\ref{SimpleMartingale} becomes false without the assumption~$\omega$ is strongly martingale connected. We use a little bit of complex variable notation to indicate points in the plane. Let~$\omega$ be given by the rule
\begin{equation}\label{Example}
\omega = \{z\in \mathbb{R}^2\mid |z|\leq 1\}\setminus \bigcup_{j=0,1,2}\Set{z\in \mathbb{R}^2}{\big|z-\frac12 e^{\frac{2\pi i j}{3}}\big| < \frac{\sqrt{3}}{4}-\frac{1}{239}}.
\end{equation}
\begin{figure}
\begin{center}
\includegraphics[width=0.4\textwidth]{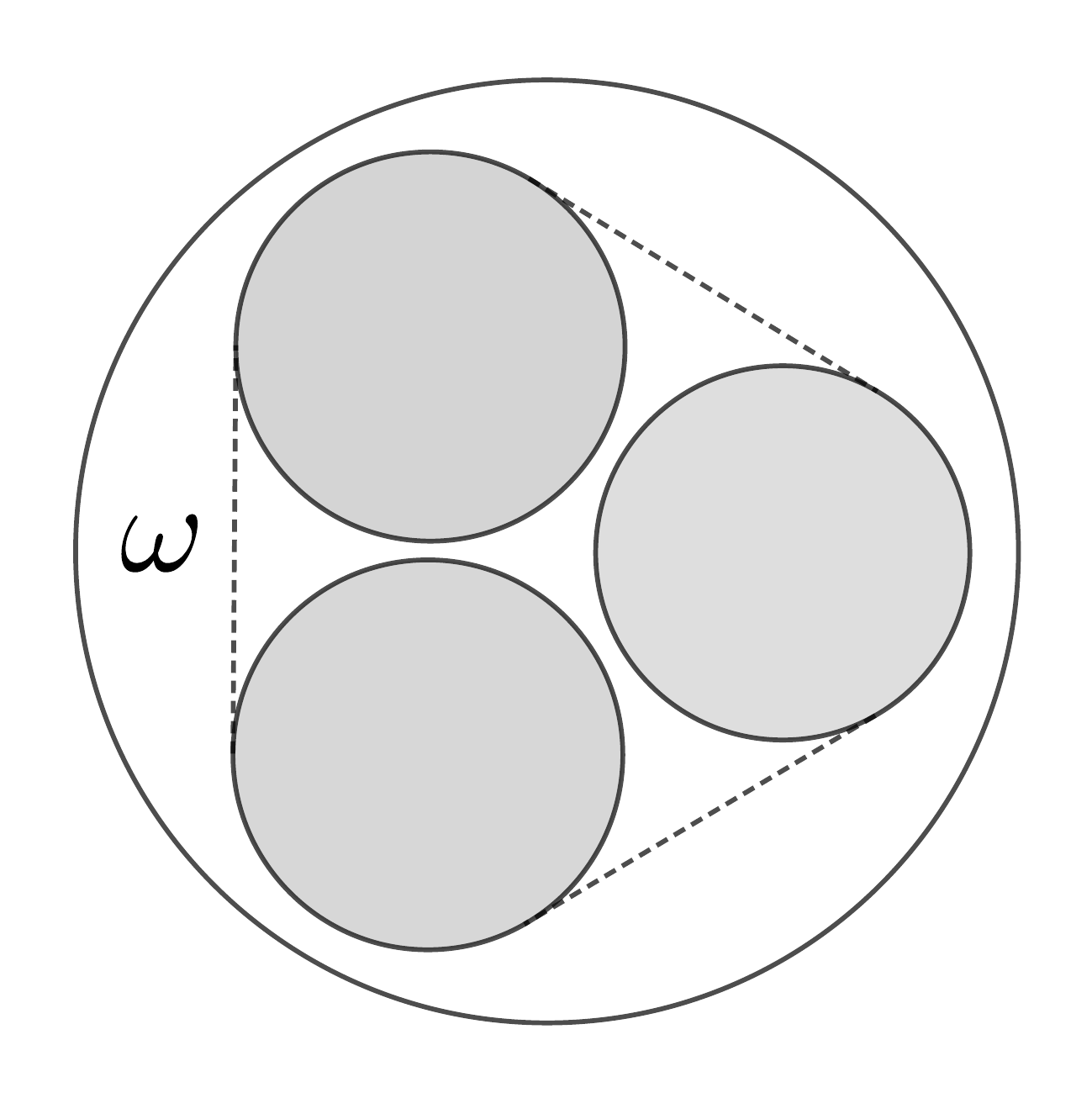}
\end{center}
\caption{An example of a domain that is not strongly martingale connected. The dotted lines mark the convex hull.}
\label{ThreeCircles}
\end{figure}
The main feature of the specific numbers in~\eqref{Example} is that the three small circles almost touch (see Fig.~\ref{ThreeCircles}). By definition,~$\FixedBoundary \omega = \{|z|=1\}$. Let~$f \equiv 0$ on the fixed boundary. In this case, 
\begin{equation*}
\BMs_{\omega,f}(x) = \begin{cases}
-\infty, \quad &x\in \omega \cap \conv\bigg(\bigcup_{j=0,1,2}\Set{z\in \mathbb{R}^2}{\big|z-\frac12 e^{\frac{2\pi i j}{3}}\big| < \frac{\sqrt{3}}{4}-\frac{1}{239}}\bigg);\\
0, \quad &\hbox{otherwise}.
\end{cases}
\end{equation*}
On the other hand, Lemma~$\mathrm{C}.5$ in~\cite{StolyarovZatitskiy2016} says~$\BG_{\omega,f} \geq 0$ (the domain~$\omega$ is a cheese domain in the terminology of that paper). Therefore,~$\BG\equiv 0$ and does not coincide with~$\BMs$. The effect of similar nature arises when one defines rank-one or separate convex hulls, see~\cite{KMS2003}.

\section{Functions on the circle and the line}\label{S5}
During the proof, we will need to work with functions defined on the circle~$\T$ of unit length (in other words, its radius equals~$1/(2\pi)$). Let us equip~$\T$ with the natural arc length measure. We will think of functions on~$\T$ as of periodic functions on the line, i.\,e., identify the function~$\varphi \colon \T \to \R^d$ with its periodic version~$\varphi\per\colon \R\to \R^d$, here
\eq{
\varphi\per(t) = \varphi\Big(\frac{1}{2\pi}e^{2\pi i t}\Big),\qquad t \in \R.
}
Consider a version of the class~\eqref{Class} formed from summable~$\R^d$-valued functions on~$\T$:
\eq{\label{ClassC}
\ClassC = \SSet{\varphi \colon \T \to \partial \OmNull}{\begin{aligned}\exists \ \text{open set}\ \hOmOne\ \text{such that}\ \cl \OmOne\subset \hOmOne\ \text{and}\ \cl\hOmOne\subset \OmNull \\ \text{and for any interval}\  \J\subset \R \quad \av{\varphi\per}{\J}\notin \hOmOne\end{aligned}}.
}
Note that in this definition we require the 'boundedness of oscillation' over arcs that may wind around the circle several times. The additional domain~$\hOmOne$ is mostly needed for technical purposes, it seems to be unavoidable in Lemma~\ref{SplittingLemma} below.

\begin{Rem}\label{rem240401}
By Theorem~\ref{OuterApproximation} below\textup, we may assume~$\hOmOne$ in~\eqref{ClassC} is strictly convex. In this case\textup, the domain $\hOm = \cl \OmNull \setminus \hOmOne$ satisfies~\eqref{StrictConvexity} and~\eqref{ConeCondition}.
\end{Rem}
 Recall~$\Omega^* = \cl\OmNull\setminus \cl\OmOne$.  
\begin{Le}
Assume the domains satisfy~\eqref{StrictConvexity}. In the case~$d=2$ we additionally assume~\eqref{ConeCondition}. For any~$x \in \OmStar$ there exists~$\varphi \in \ClassC$ such that~$\av{\varphi}{\T} = x$.
\end{Le}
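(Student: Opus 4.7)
The plan is to adapt the construction of Corollary~\ref{CorollaryOfDomainLemma} to the circle, addressing two new features of $\ClassC$: the segment through $x$ must avoid $\cl\OmOne$ (not merely $\OmOne$), and the averages of the periodic extension over \emph{all} intervals of the real line must stay outside an open neighborhood $\hOmOne$ of $\cl\OmOne$ satisfying $\cl\hOmOne \subset \OmNull$.

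First, since $x \in \OmStar$ we have $x \notin \cl\OmOne$, so I would apply Lemma~\ref{DomainLemma} combined with Remark~\ref{DomainRemark} to obtain a segment $\ell_x = [a,b]$ with $a, b \in \partial\OmNull$, $x \in \ell_x$, and $\ell_x \cap \cl\OmOne = \varnothing$. Writing $x = \alpha a + \beta b$ with $\alpha, \beta \geq 0$ and $\alpha + \beta = 1$, I would split $\T$ into two disjoint arcs $E_a, E_b$ of arclengths $\alpha$ and $\beta$ and define $\varphi\colon \T \to \partial\OmNull$ by $\varphi \equiv a$ on $E_a$ and $\varphi \equiv b$ on $E_b$. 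By construction $\av{\varphi}{\T} = \alpha a + \beta b = x$, and because $\varphi\per$ attains only the two values $a$ and $b$, every average $\av{\varphi\per}{\J}$ for an interval $\J \subset \R$ is a convex combination of $a$ and $b$ and therefore lies on the segment $\ell_x$, regardless of how many times $\J$ wraps around the circle.

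The only real obstacle is producing the auxiliary domain $\hOmOne$, which is nontrivial because $\cl\OmOne$ can be unbounded (as in the Muckenhoupt and $\BMO$ examples), ruling out a uniform tubular neighborhood of fixed radius. The segment $\ell_x$ is compact and disjoint from the closed set $\cl\OmOne$, and the standing assumption $\cl\OmOne \subset \OmNull$ places $\cl\OmOne$ inside the open set $U = \OmNull \setminus \ell_x$. Because $\R^d$ is normal, one can interpose an open $\hOmOne$ between them so that $\cl\OmOne \subset \hOmOne \subset \cl\hOmOne \subset U$; an explicit choice is $\hOmOne = \{y \in \R^d : \dist(y, \cl\OmOne) < \dist(y, \R^d \setminus U)\}$, for which one verifies directly that points of $\cl\hOmOne$ satisfy $\dist(y,\cl\OmOne) \leq \dist(y, \R^d \setminus U)$ and therefore cannot lie in $\R^d \setminus U$. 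This yields $\cl\hOmOne \subset \OmNull$ and $\hOmOne \cap \ell_x = \varnothing$, so $\av{\varphi\per}{\J} \in \ell_x$ forces $\av{\varphi\per}{\J} \notin \hOmOne$, and $\varphi \in \ClassC$ follows.
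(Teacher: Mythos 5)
Your proposal is correct and follows essentially the same route as the paper: use Remark~\ref{DomainRemark} to place $x$ on a segment $\ell_x \subset \OmStar$ with endpoints on $\partial\OmNull$, take the two-valued step function extended periodically so all Bellman points lie on $\ell_x$, and interpose an open $\hOmOne$ between $\cl\OmOne$ and $\OmNull\setminus\ell_x$. The paper simply asserts the existence of such $\hOmOne$ without comment, whereas you spell out a normality-style construction; that added detail is sound and harmless, but the underlying argument is identical.
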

\begin{proof}
Construct a segment~$\ell_x$ with the help of Remark~\ref{DomainRemark}: there exist points~$a,b\in \partial \OmNull$ and non-negative numbers~$\alpha$ and~$\beta$ with sum one such that~$x = \alpha a + \beta b$
and~$\ell_x = [a,b] \subset \OmStar$. Take any open $\hOmOne$ such that $\cl \OmOne\subset \hOmOne$ and $\cl\hOmOne\subset \OmNull \setminus \ell_x$. Construct the function~$\varphi$ as in~\eqref{StepFunction} and extend it periodically to the entire line. Then, for any~$\J \subset \R$ the point~$\av{\varphi_{\per}}{\J}$ lies on~$\ell_x$  and therefore not in $\hOmOne$. %It suffices to construct the set~$\tOmOne$ as in~\eqref{ClassC}. For example, we may find a hyperplane that separates~$\ell_x$ from~$\OmOne$ and intersect~$\OmNull$ with the corresponding half-space and use Theorem~\ref{OuterApproximation}.
\end{proof}
Consider the Bellman functions
\eq{\label{BellmanC}
\BellC_{\Omega,f}(x) = \sup\Set{\av{f(\varphi)}{\I}}{\varphi \in \ClassC,\ \av{\varphi}{\T} = x},\quad x\in \OmStar,
}
and
\eq{
\BellCb_{\Omega,f}(x) = \sup\Set{\av{f(\varphi)}{\I}}{\varphi \in \ClassC,\ \av{\varphi}{\T} = x, \quad \varphi \in L_{\infty}},\quad x\in \OmStar.
}
Similar to~\eqref{Bellman} and~\eqref{Bellmanb}, we define the function~\eqref{BellmanC} for~$f$ that is bounded from below, whereas the function~$\BellCb$ is a meaningful object for any~$f$ locally bounded from below. Since~$\varphi_{\per}|_{[0,1]} \in \Class$  for any~$\varphi \in \ClassC$, we have
\eq{\label{Monotonicity}
\BellC_{\Omega,f}(x) \leq \Bell_{\Omega,f}(x)\quad \text{and}\quad \BellCb_{\Omega,f}(x) \leq \Bellb_{\Omega,f}(x),\qquad x\in \OmStar.
}
The theorem below is our second main result.
\begin{Th}\label{CircleTheorem}
Let the domains~$\OmNull$ and~$\OmOne$ satisfy the usual requirements~$\cl \OmOne \subset \OmNull$\textup,~\eqref{StrictConvexity}\textup, and~\eqref{ConeCondition}. Let the function~$f$ be lower semi-continuous and bounded from below. Then\textup,
\eq{
\BellC_{\Omega,f}(x) = \BG_{\Omega^*, f}(x),\qquad x\in\Omega^*.
}
\end{Th}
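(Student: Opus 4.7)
\emph{Proof plan.} I would prove the two inequalities $\BellC_{\Omega,f}(x)\le\BG_{\Omega^*,f}(x)$ and $\BG_{\Omega^*,f}(x)\le\BellC_{\Omega,f}(x)$ separately. The upper bound is a reduction to the already-established interval case (Theorem~\ref{IntervalTheorem}); the lower bound combines the simple-martingale description of Lemma~\ref{SimpleMartingale} with the homogenization technique of~\cite{StolyarovZatitskiy2021}.

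\emph{Upper bound.} Fix $\varphi\in\ClassC$ with $\av{\varphi}{\T}=x$ and let $\hOmOne$ be the auxiliary open set witnessing $\varphi\in\ClassC$. By Remark~\ref{rem240401}, together with a $C^2$-smoothing approximation, I may arrange that $\hOmOne$ is strictly convex with $C^2$ boundary. A slight further shrinking produces $\hOmOne_1$ with $\cl\OmOne\subset\hOmOne_1$ and $\cl\hOmOne_1\subset\hOmOne$, so that all arc averages of $\varphi_\per$ still lie outside $\hOmOne_1$ and moreover $x\notin\cl\hOmOne_1$. Set $\hOm:=\cl\OmNull\setminus\hOmOne_1$; this lens satisfies all hypotheses of Theorem~\ref{IntervalTheorem}, while $\varphi$ restricted to one period lies in the class $\Class$ built from $(\OmNull,\hOmOne_1)$, so
\eq{
\av{f(\varphi)}{\T}\le\Bell_{\hOm,f}(x)=\BG_{\hOm,f}(x).
}
Because $\hOm\subset\Omega^*$ and both domains share the fixed boundary $\partial\OmNull$, restriction sends $\LC{\Omega^*}{f}$ into $\LC{\hOm}{f}$, so $\BG_{\hOm,f}(x)\le\BG_{\Omega^*,f}(x)$; taking supremum over $\varphi$ concludes this direction.

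\emph{Lower bound.} Lemma~\ref{Le52}, together with the standing assumptions~\eqref{StrictConvexity} and~\eqref{ConeCondition}, gives that $\Omega^*$ is strongly martingale connected, and Lemma~\ref{SimpleMartingale} therefore identifies $\BG_{\Omega^*,f}(x)$ with $\BMs_{\Omega^*,f}(x)$. Given $\eps>0$, I pick a simple $\Omega^*$-martingale $M$ with $M_0=x$ and $\E f(M_\infty)>\BG_{\Omega^*,f}(x)-\eps$ and aim to realise $M$ by a test function $\varphi\in\ClassC$ with $\av{\varphi}{\T}=x$ and $\av{f(\varphi)}{\T}$ within $\eps$ of $\E f(M_\infty)$. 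A naive unfolding of the finite tree of $M$ into a step function $\psi$ on $[0,1]$ gives $\av{\psi}{[0,1]}=x$, but sub-interval averages of $\psi$ may enter $\cl\OmOne$, because a segment joining leaves from distinct branches of $M$ need not lie in $\Omega^*$. The remedy is the homogenization of~\cite{StolyarovZatitskiy2021}: replace $\psi$ by a highly oscillating $\psi_N$ in which every branch is subdivided and interleaved, so that sub-interval averages of length $\lesssim 1/N$ stay close to values on individual martingale segments (hence safely in $\Omega^*$), while averages over sub-intervals of length $\gtrsim 1/N$ are forced close to the global mean~$x$.

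\emph{Main obstacle.} The decisive step, and the point at which the circular geometry genuinely enters, is to verify that the periodic extension of $\psi_N$ to~$\T$ belongs to $\ClassC$. The definition of $\ClassC$ quantifies over every interval $\J\subset\R$, including those of length larger than~$1$; for such $\J$ the average $\av{\varphi_\per}{\J}$ is a convex combination of the global mean~$x$ with a short-arc average. Exploiting the positive distance $\dist(x,\cl\OmOne)>0$, the strict convexity condition~\eqref{StrictConvexity}, and a sufficiently large choice of~$N$, I would produce a single strictly convex open set $\hOmOne\supset\cl\OmOne$ with $\cl\hOmOne\subset\OmNull$ that is disjoint from every arc average of the periodic extension; this certifies $\varphi\in\ClassC$ and, by the homogenization estimate, yields $\av{f(\varphi)}{\T}\ge\E f(M_\infty)-\eps$ for $N$ large, completing the proof.
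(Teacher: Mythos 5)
Your lower bound follows the paper's route: strong martingale connectedness of~$\Omega^*$ (Lemma~\ref{Le52}), identification of~$\BG_{\Om^*,f}$ with~$\BMs_{\Om^*,f}$ (Lemma~\ref{SimpleMartingale}), and realization of a near-optimal simple~$\Omega^*$-martingale by a test function on the circle via the homogenization technique of~\cite{StolyarovZatitskiy2021}. The paper packages this last step as a ``gluing lemma'' (Lemma~\ref{GluingLemma}): one passes to the measure-valued martingale~$\mathbb M_n(w)=\mu_{M_\infty|_w}$ indexed by atoms~$w$, checks it is an~$(\OMM,\DD)$-martingale in~$\M(\FixedBoundary\Om)$ for the set~$\OMM$ from~\eqref{OurOMM}, and applies Theorem~\ref{SZGluingTheorem} to manufacture~$\varphi\in\ClassC$ directly, with no limit over an oscillation parameter~$N$. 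Your heuristic of interleaved oscillations with~$N\to\infty$ is morally the same idea, but as written it is a plan rather than a proof: the quantitative estimate ensuring arc averages of~$\varphi_\per$ over intervals of all lengths avoid a fixed~$\hOmOne$ is exactly what Theorem~\ref{SZGluingTheorem} provides and what your sketch leaves open.

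The genuine gap is in the upper bound. You reduce to ``the already-established interval case (Theorem~\ref{IntervalTheorem}),'' but in this paper Theorem~\ref{IntervalTheorem} is \emph{deduced from} Theorem~\ref{CircleTheorem}, together with Proposition~\ref{CoincidenceProposition} and the extension Theorem~\ref{ExtensionGen} (see the first lines of Section~\ref{S8}); it is not independently available. Hence the middle equality in your chain $\av{f(\varphi)}{\T}\le\Bell_{\hOm,f}(x)=\BG_{\hOm,f}(x)\le\BG_{\Om^*,f}(x)$ is circular. The paper avoids this by using only the Splitting Lemma (Lemma~\ref{SplittingLemma}) for the upper bound: given a near-optimal $\psi\in\ClassC$ with auxiliary domain~$\hOmOne$ (taken strictly convex via Remark~\ref{rem240401}), the restriction~$\psi_\per|_{[0,1]}$ lies in~$\Class$ built from~$(\OmNull,\hOmOne)$, the Splitting Lemma produces a martingale~$M$ equidistributed with~$\psi$ in a suitable extension lens, and one bounds~$\E f(M_\infty)$ by~$\BG_{\Om^*,f}(x)$ via the identification~$\BG=\BM$ on~$\Om^*$ (Theorem~\ref{Th53bis}, Proposition~\ref{LowerSemicont}, Lemma~\ref{Le52}). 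In short, the martingale Bellman function~$\BM$ replaces any appeal to the interval Bellman function~$\Bell$; you should rewrite the upper bound along those lines, or else supply an independent proof of the inequality~$\Bell_{\hOm,f}\le\BG_{\hOm,f}$ before invoking it.
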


The geometric functions on the right hand sides of Theorems~\ref{IntervalTheorem} and~\ref{CircleTheorem} are closely related.
\begin{St}\label{CoincidenceProposition}
Let the domains~$\OmNull$ and~$\OmOne$ satisfy the requirements~$\cl \OmOne \subset \OmNull$ and~\eqref{StrictConvexity}.  Let the function~$f$ be locally bounded. Then\textup,
\eq{
\BG_{\Omega, f}(x) = \BG_{\Omega^*, f}(x),\qquad x\in\Omega^*.
}
\end{St}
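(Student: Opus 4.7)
The plan is to prove the two inequalities $\BG_{\OmStar, f} \le \BG_{\Om, f}$ and $\BG_{\Om, f} \le \BG_{\OmStar, f}$ on $\OmStar$ separately. The first is immediate: for any $G \in \LC{\Om}{f}$, the restriction $G|_{\OmStar}$ lies in $\LC{\OmStar}{f}$ because every segment in $\OmStar$ is also a segment in $\Om$ and the fixed boundary $\partial \OmNull$ is common to both domains; taking the infimum over $G$ gives the bound.

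For the reverse inequality, I would show that every $G \in \LC{\OmStar}{f}$ extends to some $\tilde G \in \LC{\Om}{f}$ with $\tilde G|_{\OmStar} = G$, so that $\BG_{\Om, f}(x) \le \tilde G(x) = G(x)$ and taking the infimum over $G$ concludes. I propose defining $\tilde G(z) := \limsup_{\OmStar \ni y \to z} G(y)$ for $z \in \partial \OmOne = \Om \setminus \OmStar$; this is never $-\infty$ because $G$ is not, and the boundary inequality $\tilde G \ge f$ on $\partial \OmNull$ is inherited from $G$. The crux is then to verify local concavity of $\tilde G$ on $\Om$.

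The key geometric input from strict convexity of $\OmOne$ is that any segment $\ell \subset \Om$ intersects $\partial \OmOne$ in at most one point $z$, and if $z$ lies in the relative interior of $\ell$ then $\ell$ must be contained in a supporting hyperplane to $\OmOne$ at $z$ (otherwise $\ell$ would cross into $\OmOne$). For $z' = \alpha p + (1-\alpha) q$ with $p, q, z' \in \ell$, I would verify $\tilde G(z') \ge \alpha \tilde G(p) + (1-\alpha) \tilde G(q)$ by perturbation: pick a normal $\nu$ to a supporting hyperplane containing $\ell$ that points away from $\OmOne$, so that the shifted segment $[p + \eps \nu, q + \eps \nu]$ lies in $\OmStar$ for all small $\eps > 0$. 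Concavity of $G$ there gives $G(z' + \eps \nu) \ge \alpha G(p + \eps \nu) + (1-\alpha) G(q + \eps \nu)$, and passing to the limit $\eps \to 0^+$ produces the required inequality for $\tilde G$, using continuity of $G$ on the interior of $\OmStar$ when $z' \ne z$ and the definition of $\limsup$ when $z' = z$.

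The main obstacle will be executing this perturbation cleanly in two awkward situations. First, if $p$ or $q$ lies on $\partial \OmNull$, the shift $p + \eps \nu$ may leave $\cl \OmNull$; I would handle this by first approximating such endpoints by nearby interior points along $\ell$ and passing to an iterated limit, legitimate because concave functions are continuous along segments up to the fixed boundary. Second, at corners of $\partial \OmOne$ the outward normal is not unique and one must choose $\nu$ inside the particular supporting hyperplane containing $\ell$; strict convexity guarantees such a choice exists. A companion observation, ensuring that $\tilde G(z)$ is exactly the right value for concavity across $z$, is that the two one-sided limits of $G$ along a tangent segment at $z$ automatically coincide: this follows from a midpoint-concavity argument applied in the $2$-plane spanned by $\ell$ and $\nu$ to segments of the form $[z - tv + \eps \nu, z + tv + \eps \nu]$ whose midpoint is $z + \eps \nu$.
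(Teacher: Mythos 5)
Your overall strategy matches the paper's: restriction gives $\BG_{\Om,f} \ge \BG_{\OmStar,f}$, and for the converse one extends $\BG_{\OmStar,f}$ across the free boundary and proves the extension is locally concave. Your first inequality is correct. But your choice of \emph{how} to extend creates a genuine gap.

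The paper defines the boundary value at $x\in\FreeBoundary\Om$ as a \emph{directional} limit along a fixed transversal segment $\ell_x$, whereas you define $\tilde G(z)=\limsup_{\OmStar\ni y\to z}G(y)$. The difference matters precisely in the case you do not address: a segment $[p,q]\subset\Om$ whose \emph{endpoint} $p$ lies on $\partial\OmOne$ while $z',q\in\OmStar$. Concavity of $\tilde G|_{[p,q]}$ then requires an \emph{upper} bound $\tilde G(p)\le\bigl(G(z')-(1-\alpha)G(q)\bigr)/\alpha$. Your perturbation produces, after $\eps\to 0^+$, only $G(z')\ge\alpha\lim_{\eps\to 0^+}G(p+\eps\nu)+(1-\alpha)G(q)$, i.e. a bound on the directional limit along $\nu$, and the $\limsup$ sits on the wrong side of that directional limit ($\limsup\ge\lim_\nu$). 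Your ``companion observation'' only argues that the two one-sided tangential limits agree with each other; it does not show that the $\limsup$ coincides with the directional limit along $\nu$, nor with the limits along transversal segments, which is exactly what is needed. Moreover, when $p\in\partial\OmOne$ is an endpoint and $[p,q]$ is \emph{transversal}, the segment is not contained in any supporting hyperplane to $\OmOne$ at $p$, so there is no admissible choice of $\nu$ and your perturbation scheme is unavailable; this subcase is simply missing. The paper sidesteps both problems by (i) defining the boundary value as the limit along a chosen transversal $\ell_x$, so the value is \emph{by construction} compatible with perturbation toward $\ell_x$, and (ii) proving Lemma~\ref{TransLemma} (the convex hull of a transversal segment $\ell_x$ and any other segment $s\subset\Om$ sharing the endpoint $x$ lies in $\OmStar\cup\{x\}$), which legitimizes perturbing \emph{all} three points $a,b,x$ toward $\ell_x$ (or toward $\ell_a$ in the endpoint case) and taking limits. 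To repair your argument you would either need to prove that $\limsup_{\OmStar\ni y\to z}G(y)$ equals the limit along transversal segments at $z$ (a real lemma, not a formality), or switch to the directional-limit definition and replace the normal perturbation by the convex-hull reduction.
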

Before we pass to the proofs, we need to survey the theory from~\cite{StolyarovZatitskiy2021}. 
\begin{Def}
We say that two~$Y$-valued random variables~$\zeta_1$ and~$\zeta_2$ are equidistributed provided they have the same distributions\textup, which means
\eq{
P(\zeta_1 \in A) = P(\zeta_2 \in A)
} 
for any measurable set~$A \subset Y$.
\end{Def}
Note that if~$\zeta_1$ and~$\zeta_2$ are equidistributed, then~$\E f(\zeta_1) = \E f(\zeta_2)$ for any function~$f$ such that one of these mathematical expectations exists.
If~$\varphi\in \ClassC$ and~$\J\subset \R$ is an interval, then by~$\mu_{\varphi|_{\J}}$ we denote the distribution of the random variable~$\varphi_{\per}|_{\J}$. In other words,
\eq{
\mu_{\varphi|_{\J}}(A) = \frac{1}{|\J|}\big|\set{t\in \J}{\varphi_{\per}(t) \in A}\big|,\qquad A \ \text{is a Borel subset of}\ \FixedBoundary\Om.
} 
Note that~$\mu_{\varphi|_{\J}}$ is a probability measure on~$\FixedBoundary\Om$. Consider the set~$\M(\FixedBoundary\Om)$ of all probability measures on~$\FixedBoundary\Om$ with bounded first moment. The set~$\M(\FixedBoundary\Om)$ is a convex subset of the space of all finite signed measures on~$\FixedBoundary\Om$  with bounded first moment. Let~$\OMM$ be a subset of~$\M(\FixedBoundary\Om)$. We will always impose two conditions on this set:
\eq{\label{TwoConditions}\begin{aligned}
&1) \text{ The set }\OMM\text{ contains all delta measures, i.\,e., }
\DD = \set{\delta_x}{x\in \FixedBoundary\Om} \subset \OMM; \\
&2) \text{ For any finite collection }x_1,x_2,\ldots, x_N \in \FixedBoundary\Om\text{ the intersection of } \OMM\\
&\quad\text{ with the linear space generated by }\delta_{x_1},\delta_{x_2},\ldots,\delta_{x_N}\\
&\quad\text{ is open in the topology of the latter space.}
 \end{aligned}
}
Note that~$\DD \subset \FixedBoundary\OMM$. We will be working with simple martingales that attain their values in the space~$\M(\FixedBoundary\Om)$; these martingales are easy to define since a simple martingale attains values in a finite dimensional linear space. We denote by~$\mu_{\varphi}$ the distribution of the function~$\varphi$ itself, i.\,e.,~$\mu_\varphi = \mu_{\varphi|_{[0,1]}}$.
\begin{Th}[Theorem $2.3$ in~\cite{StolyarovZatitskiy2021} with slight modifications]\label{SZGluingTheorem}
Let~$\OMM$ be a subset of~$\M(\FixedBoundary\Om)$ that satisfies conditions~\eqref{TwoConditions}. Let~$\mathbb{M}$ be a simple~$(\OMM,\DD)$-martingale in the sense of Remark~\ref{InfDimRem}. Then\textup, there exists~$\varphi\colon \T \to \FixedBoundary\Om$ such that~$\mu_{\varphi} = \mathbb{M}_0$ and for any interval~$\J \subset \R$ we have~$\mu_{\varphi|_{\J}} \in \OMM$.
\end{Th}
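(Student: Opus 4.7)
The plan is to argue by induction on the depth $N$ of the tree of atoms underlying the simple martingale $\mathbb{M}$, constructing $\varphi$ by a homogenization procedure that places scaled copies of functions obtained at the previous inductive step. The base case $N=0$ is immediate: then $\mathbb{M}_0 = \mathbb{M}_\infty = \delta_x$ for some $x\in\FixedBoundary\Om$, and the constant function $\varphi\equiv x$ satisfies $\mu_{\varphi|_\J} = \delta_x \in \DD \subset \OMM$ for every interval $\J$.

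For the inductive step, suppose the statement holds for all simple $(\OMM,\DD)$-martingales of depth at most $N$ and let $\mathbb{M}$ have depth $N+1$. Let $a_1,\ldots,a_k$ be the atoms of $\F_1$, with probabilities $q_i = P(a_i)$ and values $\nu_i = \mathbb{M}_1|_{a_i}$, so that $\mathbb{M}_0 = \sum_i q_i\nu_i$. By condition~3 of Definition~\ref{OmegaMart} applied to the trivial atom at time zero, there is a convex set $C\subset\OMM$ with $\nu_1,\ldots,\nu_k\in C$, hence $\conv\{\nu_1,\ldots,\nu_k\}\subset C\subset\OMM$. The restriction of $\mathbb{M}$ to $a_i$ is a simple $(\OMM,\DD)$-martingale of depth at most $N$ starting from $\nu_i$, so by induction one obtains $\varphi_i\colon\T\to\FixedBoundary\Om$ with $\mu_{\varphi_i}=\nu_i$ and $\mu_{\varphi_i|_\J}\in\OMM$ for every $\J\subset\R$.

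Next, I homogenize. Fix a large integer $K$, partition $\T$ into $K$ periods of length $1/K$, and subdivide each period into $k$ sub-arcs of lengths $q_1/K,\ldots,q_k/K$ in a fixed order; on each sub-arc of type $i$ place a copy of the periodic function $\varphi_i$ compressed so that one full period of $\varphi_i$ occupies this sub-arc. Let $\varphi^{(K)}$ denote the result; by construction $\mu_{\varphi^{(K)}} = \sum_i q_i\nu_i = \mathbb{M}_0$. For any interval $\J\subset\R$ the measure $\mu_{\varphi^{(K)}|_\J}$ is the length-weighted average of (a) a bulk coming from the full sub-arcs of $\varphi^{(K)}$ lying inside $\J$, whose distribution is a convex combination of the $\nu_i$ and therefore lies in $C\subset\OMM$; and (b) at most two boundary contributions at the endpoints of $\J$, each a restriction measure $\mu_{\varphi_i|_{\tilde\J}}\in\OMM$ by induction. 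When $\J$ is contained in a single sub-arc only (b) appears and the claim is immediate.

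The main obstacle is the intermediate regime, where $\J$ straddles several sub-arcs and $\mu_{\varphi^{(K)}|_\J}$ becomes a nontrivial convex combination of restriction measures $\mu_{\varphi_i|_{\tilde\J_i}}$ with different $i$; such a combination is a priori only known to lie in $\OMM$ summand by summand. The way out exploits that, since $\mathbb{M}$ is simple, $\varphi^{(K)}$ takes values in a fixed finite set of points of $\FixedBoundary\Om$, so every measure $\mu_{\varphi^{(K)}|_\J}$ lies in a common finite-dimensional space, where the openness hypothesis of~\eqref{TwoConditions} supplies a genuine open neighborhood of every measure of $\OMM$. Choosing $K$ large enough makes the relative weight of (b) uniformly small whenever (a) is present, forcing $\mu_{\varphi^{(K)}|_\J}$ into an arbitrarily small neighborhood of the bulk distribution, which lies in $C\subset\OMM$. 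Uniformity in $\J$ is secured because, up to translation inside $\T$, only finitely many combinatorial configurations of partial sub-arcs can occur at the endpoints of $\J$; together with the openness of $\OMM$ this completes the inductive step.
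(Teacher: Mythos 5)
The paper does not actually reprove Theorem~\ref{SZGluingTheorem}: it is imported from~\cite{StolyarovZatitskiy2021} with a one-line remark that the argument there goes through after restricting to the finite-dimensional span of the atoms. So I can only judge your argument on its own terms, and there is a genuine gap in the final step, precisely where you sensed trouble.

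The claim that increasing $K$ makes the boundary contribution~(b) relatively small whenever a bulk~(a) is present is false. Both the full sub-arcs inside~$\J$ and the two partial sub-arcs at its ends have lengths of order~$1/K$; their ratio does not depend on~$K$ at all. Worse, your case analysis silently omits the most dangerous regime: an interval~$\J$ of length of order~$1/K$ that meets no complete sub-arc but straddles the boundary between two adjacent sub-arcs of different types $i\ne j$. In that case $\mu_{\varphi^{(K)}|_\J}$ is a nontrivial convex combination of $\mu_{\varphi_i|_{\tilde\J_i}}$ and $\mu_{\varphi_j|_{\tilde\J_j}}$, there is no bulk to lean on, and $\OMM$ is not a convex set, so membership in $\OMM$ can and does fail. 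A concrete counterexample: let $\OmNull$ be the disk of radius~$2$, $\OmOne$ the disk of radius~$0.3$, and take the depth-two $\Om^*$-martingale $M_0=(1.25,0.25)$, $M_1\in\{(2,0),(0.5,0.5)\}$ each with probability $1/2$, and from $(0.5,0.5)$ split $M_2$ into $(1.85,0.77)$ with probability $0.65$ and $(-2,0)$ with probability $0.35$. One checks directly that the segments $[(2,0),(0.5,0.5)]$ and $[(1.85,0.77),(-2,0)]$ avoid a disk of radius $0.3$, so this is a legitimate $\hOm$-martingale and the induction does produce $\varphi_1\equiv(2,0)$ and a two-valued $\varphi_2\in\ClassC$ with distribution $\nu_2$. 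But in the homogenized $\varphi^{(K)}$, a $\varphi_2$-sub-arc ending with the value $(-2,0)$ is followed by a $\varphi_1$-sub-arc with value $(2,0)$; an interval $\J$ covering equal amounts of both has $\int x\,d\mu_{\varphi^{(K)}|_\J}=(0,0)$, which lies strictly inside $\OmOne\subset\hOmOne$. Since the failure is strict, the openness condition~\eqref{TwoConditions} cannot rescue it; no choice of $K$ can either, because this straddling interval simply rescales with $K$. To make an argument of this type work one must do something genuinely more delicate at the sub-arc boundaries (e.g.\ control which value each compressed $\varphi_i$ exhibits at the endpoints, or use a different, non-linear arrangement) and verify compatibility there; the naive linear concatenation of one compressed period per sub-arc is insufficient.
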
 
The theorem above will serve as a tool to construct functions~$\varphi \in \ClassC$ with prescribed distributions. We must say about the difference between the formulations above and in~\cite{StolyarovZatitskiy2021}. We require weaker openness condition on~$\OMM$ here, in~\cite{StolyarovZatitskiy2021} the set~$\OMM$ was open in weak-* topology. One may go through the proof in~\cite{StolyarovZatitskiy2021} and realize that everything happens in the finite dimensional space spanned by the values of~$\mathbb{M}_{\infty}$. We will apply the theorem to the sets
\eq{\label{ExampleOfOMM}
\OMM = \Set{\mu \in \M(\FixedBoundary\Om)}{\int\limits_{\R^d} x\,d\mu(x) \notin \cl\OmOne}.
}
Note that the intersection of~$\OMM$ with any finite dimensional space~$V$ spanned by delta measures is open. By definition, the condition~$\varphi\in \ClassC$ is almost equivalent to the condition that~$\mu_{\varphi|_J}\in \OMM$ for any subinterval~$J\subset \R$ (the word `almost' corresponds to the presence of the set~$\hOmOne$ in~\eqref{ClassC}). 
We will later show that any simple~$\Om$-martingale~$M$ generates a simple martingale~$\mathbb{M}$ with values in~$\M(\FixedBoundary\Om)$ that satisfies the conditions of Theorem~\ref{SZGluingTheorem} with~$\OMM$ given in~\eqref{ExampleOfOMM}. This observation will allow us to construct a function $\varphi \in \ClassC$ that has the same distribution as~$M_{\infty}$ (see Lemma~\ref{GluingLemma} below).

\section{Proof of Theorem~\ref{CircleTheorem}}\label{S6}
The proof will be based on two lemmas below.
\begin{Le}[Splitting lemma]\label{SplittingLemma}
Assume~$\Omega$ satisfies~\eqref{StrictConvexity} and~\eqref{ConeCondition}. Let~$\varphi \in \Class$ and let~$\tOmOne$ be an open set such that~$\cl \tOmOne \subset \OmOne$\textup, let~$\tOm = \OmNull \setminus \tOmOne$.  There exists an~$\tOm$-martingale~$M$ such that~$M_\infty$ is equidistributed with~$\varphi$.
\end{Le}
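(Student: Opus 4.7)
The plan is to realize the desired $\tOm$-martingale $M$ as a conditional-expectation martingale $M_n=\E(\tilde\varphi\mid\F_n)$ for some function $\tilde\varphi$ equidistributed with $\varphi$ and some filtration $\F=\{\F_n\}$ on $\I$ whose atoms are finite unions of subintervals. Provided $\bigvee_n\F_n$ is the full Borel algebra, martingale convergence yields $M_n\to\tilde\varphi$ a.e.\ and in $L_1$, so $M_\infty=\tilde\varphi$ is equidistributed with $\varphi$; conditions~1) and~2) of Definition~\ref{OmegaMart} are then automatic, and the whole content of the lemma is condition~3): for every atom $a$ of $\F_n$ one must produce a convex set $C_a\subset\tOm$ containing both values of $M_{n+1}|_a$.

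The naive choice $\tilde\varphi=\varphi$ with the dyadic filtration would take $C_a$ to be the chord between the two half-interval averages $\av{\varphi}{\J_1}$ and $\av{\varphi}{\J_2}$; although these endpoints lie in $\Omega\subset\tOm$, their chord can dip into $\OmOne$ and, a priori, even into $\tOmOne$. To remedy this I would allow $\F_n$ to be non-dyadic: given a parent interval $\J=[t_0,t_1]$ with $x=\av{\tilde\varphi}{\J}$, I would choose a split point $t^*\in(t_0,t_1)$ so that the chord joining $\av{\tilde\varphi}{[t_0,t^*]}$ and $\av{\tilde\varphi}{[t^*,t_1]}$ lies along (or close to) a segment $\ell_x\subset\tOm$ supplied by Lemma~\ref{DomainLemma} or Remark~\ref{DomainRemark}. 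Existence of such $t^*$ should follow from continuity of the prefix-average path $s\mapsto\av{\tilde\varphi}{[t_0,s]}$ in $\Omega$, combined with an intermediate-value argument and the slack afforded by the strict inclusion $\cl\tOmOne\subset\OmOne$. To keep $\F_n$ generating, I would interleave these chord-preserving splits with ordinary midpoint bisections, which become automatically chord-valid once all atom-averages sit well inside $\OmStar$ (with a uniform distance bound from $\cl\tOmOne$ supplied by the strict convexity and cone condition).

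The main obstacle, namely producing such a chord-respecting split for an arbitrary $\varphi\in\Class$ whose average may sit very close to $\partial\OmOne$, is exactly what the homogenization tool from~\cite{StolyarovZatitskiy2021} is designed to handle: by replacing $\varphi$ with an equidistributed $\tilde\varphi$ whose Bellman points are better spread across $\I$, one arranges that no relevant sub-interval average comes too near $\partial\OmOne$, so that chords of children averages cannot reach $\tOmOne$. This plays the role that the monotonic (non-decreasing) rearrangement plays in~\cite{StolyarovZatitskiy2016}, and is exactly the place where the methods of the present paper deviate from the simply-connected two-dimensional setup. With chord validity in hand, condition~3) holds, and the remaining verifications are the standard arguments recalled in Section~\ref{S4}.
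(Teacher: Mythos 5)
Your overall framework — realizing $M$ as a conditional-expectation martingale $\E(\cdot\mid\F_n)$ with an adaptively chosen binary filtration, and choosing each split point so that the chord between children averages avoids $\tOmOne$ — matches the paper's construction in outline. But two ingredients you invoke are off the mark, and a third, essential one is missing.

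First, the homogenization tool from~\cite{StolyarovZatitskiy2021} plays no role in this lemma: the paper takes $M_n=\E(\varphi\mid\F_n)$ with $\varphi$ itself, not an equidistributed surrogate. That machinery is what powers the Gluing Lemma (the converse direction, from martingale to circle function); the Splitting Lemma is carried over essentially unchanged from~\cite{StolyarovZatitskiy2016}. Second, interleaving chord-respecting splits with ``ordinary midpoint bisections, which become automatically chord-valid once all atom-averages sit well inside $\OmStar$'' does not work: a midpoint bisection can separate two far-apart boundary values of $\varphi$ and produce a chord crossing $\tOmOne$ no matter how far the parent average is from $\cl\tOmOne$, so this interleaving would destroy condition~3) of Definition~\ref{OmegaMart}. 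Third — and this is the missing idea — the paper's split is chosen so that it \emph{simultaneously} keeps the chord off $\tOmOne$ and keeps the ratio of children lengths bounded by $\Delta(\av{\varphi}{\J})$, where $\Delta$ is a specific function shown to be uniformly bounded on compacta precisely because of the cone condition~\eqref{ConeCondition} together with strict convexity. This boundedness of $\Delta$ is what guarantees the atoms shrink to zero and the filtration generates the Borel $\sigma$-algebra, so that $M_n\to\varphi$; you cannot decouple ``shrinking'' from ``chord-validity'' into alternate steps. The intermediate-value mechanism you gesture at is essentially Lemma~3.9 of~\cite{StolyarovZatitskiy2016}, but it has to produce both properties in a single split.
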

We will call the domain~$\tOm$ as in lemma above an \emph{extension} of~$\Om$.
%\begin{Def}
%We say that a measurable function is simple if there exists a set of full measure such that the function attains a finite number of values on this set. 
%\end{Def}
%{\bf [Comment on conditions on domains.]}
\begin{Le}[Gluing lemma]\label{GluingLemma}
Assume~$\Omega$ satisfies~\eqref{StrictConvexity} and~\eqref{ConeCondition}. Let~$M$ be a simple~$\Om^*$-martingale. There exists a function~$\varphi \in \ClassC$ that is equidistributed with~$M_\infty$.
\end{Le}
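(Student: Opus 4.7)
The plan is to reduce the gluing assertion to Theorem~\ref{SZGluingTheorem} by lifting the simple $\OmStar$-martingale $M$ to a simple measure-valued martingale $\mathbb{M}$ with values in $\M(\partial\OmNull)$. Since $M$ is simple, there is an $N$ with $M_k = M_N$ for $k \geq N$, and $M_\infty = M_N$ takes finitely many values $x_1, \ldots, x_k \in \partial\OmNull$. For each atom $a \in \F_n$ define $\mathbb{M}_n(a)$ to be the conditional distribution of $M_\infty$ given $a$, namely
\[
\mathbb{M}_n(a) \;=\; \sum_{j} \frac{P(a \cap \{M_\infty = x_j\})}{P(a)}\, \delta_{x_j}.
\]
A direct computation (averaging sub-atoms of $a$ in $\F_{n+1}$) shows that $\mathbb{M}$ is a simple martingale with values in the finite-dimensional linear space spanned by $\delta_{x_1}, \ldots, \delta_{x_k}$; its initial value $\mathbb{M}_0$ is the distribution of $M_\infty$, and $\mathbb{M}_N = \delta_{M_N} \in \DD$ almost surely since $M_\infty$ is $\F_N$-measurable. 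It remains to install an ambient set $\OMM$ for which $\mathbb{M}$ qualifies as an $(\OMM,\DD)$-martingale and the resulting function lies in $\ClassC$.

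The main technical obstacle is that the $\varphi$ produced by Theorem~\ref{SZGluingTheorem} must satisfy not merely $\av{\varphi\per}{\J} \notin \cl\OmOne$ but the stronger condition in~\eqref{ClassC} demanding an open buffer $\hOmOne$ with $\cl\OmOne \subset \hOmOne$ and $\cl\hOmOne \subset \OmNull$ that all averages avoid. Simplicity saves us: there are only finitely many atoms $a$, and for each the $\OmStar$-martingale condition provides a convex $C_a \subset \OmStar$ containing the finitely many values of $M_{n+1}|_a$. Replace $C_a$ by the convex hull of these values, so $C_a$ becomes a compact convex subset of $\OmStar = \cl\OmNull \setminus \cl\OmOne$, whence $\dist(C_a, \cl\OmOne) > 0$. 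Pick $\delta > 0$ smaller than all of these finitely many distances and small enough that $\hOmOne := \{x \in \OmNull : \dist(x, \cl\OmOne) < \delta\}$ satisfies $\cl\hOmOne \subset \OmNull$, and set
\[
\OMM \;:=\; \Set{\mu \in \M(\partial\OmNull)}{\int_{\R^d} x\, d\mu(x) \notin \cl\hOmOne},
\]
a minor variant of~\eqref{ExampleOfOMM}. Since $\cl\hOmOne$ is closed, $\OMM$ meets each finite-dimensional subspace spanned by delta measures in an open set, and $\DD \subset \OMM$ because $\partial\OmNull \cap \cl\hOmOne = \emptyset$; condition \eqref{TwoConditions} holds. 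For each atom $a$ the convex set $\{\mu : \int x\, d\mu \in C_a\}$ lies inside $\OMM$ by the choice of $\delta$ and contains every $\mathbb{M}_{n+1}(a')$ with $a' \subset a$, since its mean equals $M_{n+1}(a') \in C_a$. Hence $\mathbb{M}$ is a simple $(\OMM, \DD)$-martingale.

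Applying Theorem~\ref{SZGluingTheorem} yields $\varphi \colon \T \to \partial\OmNull$ with $\mu_\varphi = \mathbb{M}_0$ equal to the distribution of $M_\infty$, so $\varphi$ is equidistributed with $M_\infty$; and for every interval $\J \subset \R$, $\mu_{\varphi|_\J} \in \OMM$, i.e.\ $\av{\varphi\per}{\J} \notin \cl\hOmOne \supset \hOmOne$. The open set $\hOmOne$ thus witnesses $\varphi \in \ClassC$, completing the proof. The only delicate step is this buffer construction: one must enlarge $\cl\OmOne$ slightly before invoking the gluing theorem, which is possible precisely because simplicity of $M$ reduces the geometry to finitely many compact convex sets $C_a$ that can be uniformly separated from $\cl\hOmOne$.
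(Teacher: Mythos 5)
Your lifting of $M$ to the measure-valued martingale $\mathbb{M}$, the replacement of the $C_a$ by compact convex hulls, the choice of $\OMM$, and the invocation of Theorem~\ref{SZGluingTheorem} are all correct and match the paper's proof essentially step for step. The one genuine gap is the construction of the buffer domain $\hOmOne$.

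You define $\hOmOne = \{x \in \OmNull : \dist(x, \cl\OmOne) < \delta\}$ and ask that $\delta$ be ``small enough that $\cl\hOmOne \subset \OmNull$.'' For unbounded domains satisfying the cone condition~\eqref{ConeCondition}, no such $\delta>0$ need exist: the boundaries $\partial\OmNull$ and $\partial\OmOne$ are allowed to approach one another at infinity, and in that case every uniform $\delta$-thickening of $\cl\OmOne$ has closure points on $\partial\OmNull$. The Muckenhoupt domains~\eqref{MuckenhouptDomain} already exhibit this: $\dist$ between $\{xy=1\}$ and $\{xy=\delta\}$ tends to $0$ as the first coordinate grows, so for every $\delta>0$ your set $\hOmOne$ has closure points on $\{xy=1\} = \partial\OmNull$, violating the requirement $\cl\hOmOne\subset\OmNull$ in the definition~\eqref{ClassC}. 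The paper sidesteps this by taking $\hOmOne = \Om_\eps := (1-\eps)\OmNull + \eps\OmOne$ for $\eps$ close to $1$ (a set whose thickness is adapted to the local geometry rather than uniform), or alternatively by invoking Theorem~\ref{OuterApproximation} applied to $\OmOne$ inside the open set $U = \OmNull\setminus C$ (which does contain $\cl\OmOne$, because $C\subset\OmStar$ is compact and disjoint from $\cl\OmOne$). Either of these produces a strictly convex open $\hOmOne$ with $\cl\OmOne\subset\hOmOne$, $\cl\hOmOne\subset\OmNull$, and $\hOmOne\cap C=\emptyset$, after which the remainder of your argument goes through unchanged. So the issue is isolated and fixable, but the distance-based $\hOmOne$ as written does not work.
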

%Note that Lemma~\ref{GluingLemma} is very general, we do not require conditions~\eqref{StrictConvexity} or~\eqref{ConeCondition} here.

\begin{proof}[Proof of Theorem~\ref{CircleTheorem}.]
It suffices to prove the inequalities
\alg{
\label{CircleThFirstIneq}\BellC_{\Omega,f}(x) \leq &\BG_{\Omega^*, f}(x),\qquad x\in\Omega^*,\\
\label{CircleThSecondIneq}\BellC_{\Omega,f}(x) \geq &\BG_{\Omega^*, f}(x),\qquad x\in\Omega^*.
}

Without loss of generality assume that $\BG_{\Omega^*, f}$ is finite. Let us first prove~\eqref{CircleThFirstIneq}.  Fix~$x\in \OmStar$. By~\eqref{BellmanC}, for any~$\eps > 0$ there exists~$\psi \in \ClassC$ such that
\eq{
\av{\psi}{\T} = x\qquad \text{and}\qquad \av{f(\psi)}{\T} \geq \BellC(x) - \eps. 
}
Let~$\hOmOne$ be the set that corresponds to~$\psi$ in~\eqref{ClassC}. By Remark~\ref{rem240401} we assume~$\hOmOne$ is strictly convex and $\hOm = \cl\OmNull \setminus \hOmOne$ satisfies~\eqref{StrictConvexity} and~\eqref{ConeCondition}. Then,~$\Om$ is an extension of~$\hOm$. We apply Lemma~\ref{SplittingLemma} to the function~$\psi_{\per}|_{[0,1]}\in\Class(\hOm)$ with~$\Om$ in the role of extension of~$\hOm$ and obtain an~$\Om$-martingale~$M$ such that~$M_{\infty}$ is equidistributed with~$\psi$. Then,
\eq{
\BellC_{\Om,f}(x) \leq \av{f(\psi)}{\T} + \eps  = \E f(M_\infty) + \eps \leq \BG_{\OmStar,f}(x)+\eps,
}
the last inequality follows from Theorem~\ref{Th53bis}, Proposition~\ref{LowerSemicont}, and Lemma~\ref{Le52}. It remains to choose arbitrarily small~$\eps$.

Let us now prove~\eqref{CircleThSecondIneq}. By Lemmas~\ref{Le52} and~\ref{SimpleMartingale}, for any $x \in \Omega^*$ there exists a simple~$\OmStar$-martingale~$M$ such that
\eq{
\E f(M_\infty) \geq \BG_{\Omega^*, f}(x) -\eps,\qquad M_0 = x.
}
We apply Lemma~\ref{GluingLemma} and obtain a function~$\varphi \in \ClassC$ equidistributed with $M_{\infty}$. Then, 
\eq{
\av{\varphi}{\T} = \E M_{\infty} = x, \qquad \av{f(\varphi)}{\T} = \E f(M_{\infty}) \geq \BG_{\Omega^*,f}(x) - \eps.
}
It remains to choose arbitrarily small~$\eps$ to prove~\eqref{CircleThSecondIneq}.
\end{proof}

The proof of Lemma~\ref{SplittingLemma} follows the lines of the proof of Theorem~$3.7$ in~\cite{StolyarovZatitskiy2016}. We introduce the function~$\Delta\colon \Omega\setminus \FixedBoundary\Omega \to \R$:
\eq{
\Delta(x) = \sup\Set{\max\Big(1,\frac{|x-y|}{|x-z|}\Big)}{ x\in [y,z],\ y\in\Omega,\ z\in \cl\tOmOne}.
}
We will also often use the notion of a \emph{transversal} segment. See Fig.~\ref{TransSegm} for visualization.
\begin{figure}\label{TransSegm}
\begin{center}
    \includegraphics[width=0.6\textwidth]{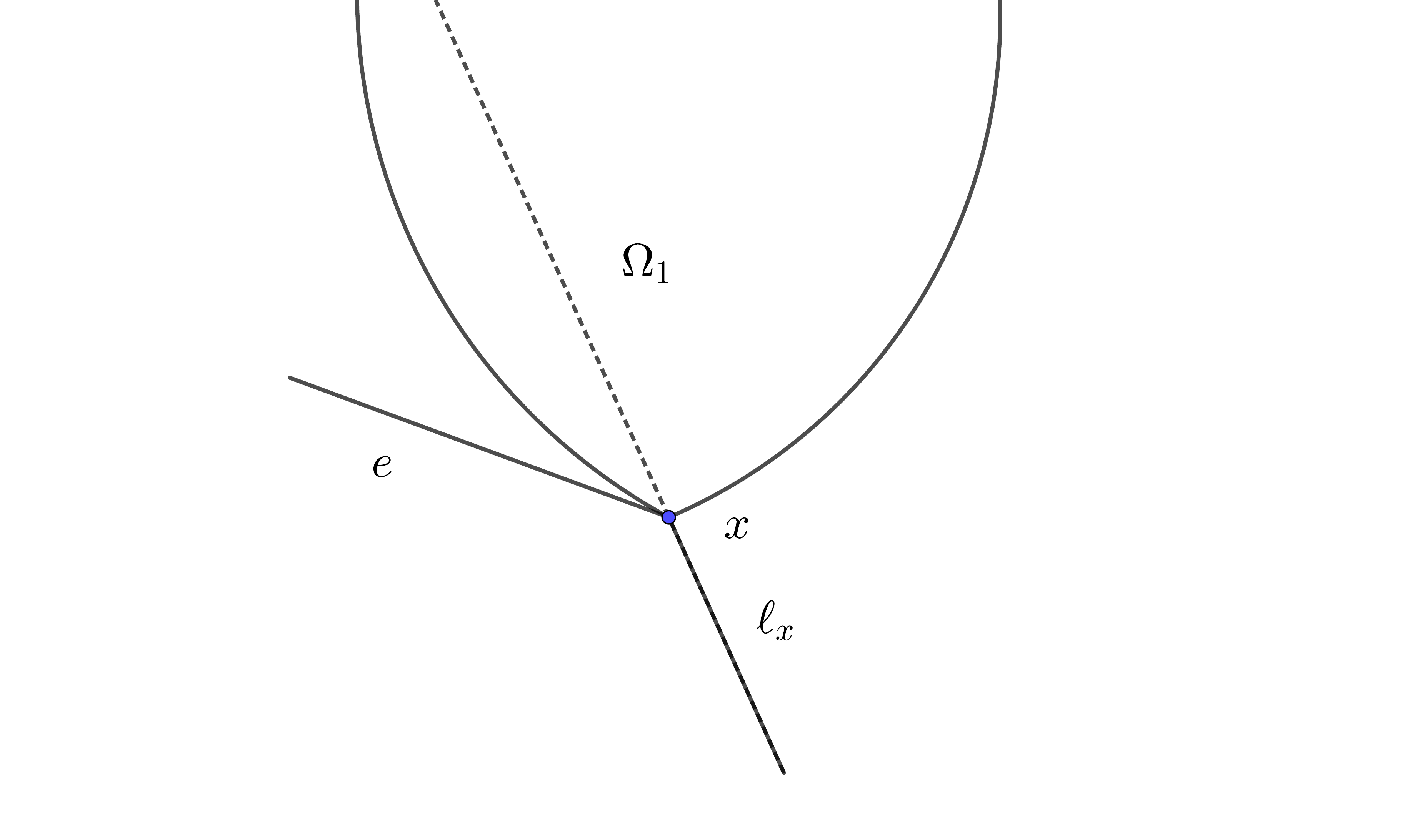}
     \caption{The segment~$\ell_x$ is transversal, the segment~$e$ is not transversal in this case.}
\end{center}		
\end{figure}
\begin{Def}\label{TransversalDef}
Let~$x \in \FreeBoundary \Om$\textup, let~$\ell\subset \Om$ be a segment with the endpoint~$x$. We say that~$x$ is transversal provided the line containing it intersects~$\OmOne$. 
\end{Def}
\begin{Le}
Let~$\OmNull$ and~$\OmOne$ satisfy~\eqref{StrictConvexity}. Let~$\tOmOne\subset \OmOne$ be such that~$\cl\tOmOne\subset \OmOne$. Then\textup, the condition~\eqref{ConeCondition} is equivalent to the fact that for any choice of~$\tOmOne$ the function~$\Delta$ is uniformly bounded on any compact subset of~$\Om$.
\end{Le}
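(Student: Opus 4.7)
The argument is a pair of compactness-based reductions, both resting on the fact that for a convex set $C$ the recession cones of $C$ and $\cl C$ coincide, together with the observation that if $v$ is a recession direction of an open convex set $C$ and $z \in C$, then the entire ray $z + \mathbb{R}_+ v$ lies in $C$ (which follows from translating a small ball around $z$).

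For the easy direction, suppose \eqref{ConeCondition} fails: some direction $v$ is a recession direction of $\OmNull$ but not of $\OmOne$. Fix any $\tOmOne$ with $\cl\tOmOne \subset \OmOne$ and any $z \in \tOmOne$. The ray $\{z+tv : t \geq 0\}$ lies in $\OmNull$ by recession, while convexity of $\OmOne$ forces it to exit $\OmOne$ at a unique finite time $T > 0$ and never re-enter. Setting $x_* := z + (T+\eps)v$ for small $\eps > 0$ gives a point of $\Om$; for every $s > T+\eps$ the point $y_s := z + sv$ also lies in $\Om$, satisfies $x_* \in [z, y_s]$, and yields the ratio $|x_*-y_s|/|x_*-z| = (s-T-\eps)/(T+\eps) \to \infty$. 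Hence $\Delta(x_*) = +\infty$, and the compact singleton $\{x_*\} \subset \Om$ already witnesses failure of uniform boundedness of $\Delta$.

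For sufficiency, assume \eqref{ConeCondition} and suppose toward a contradiction that $\Delta$ is unbounded on some compact $K$ in the domain of $\Delta$. Extract witnesses $x_n \in K$, $y_n \in \Om$, $z_n \in \cl\tOmOne$ with $x_n = (1-\lambda_n) z_n + \lambda_n y_n$ and $(1-\lambda_n)/\lambda_n \to \infty$. Since $K$ is compact and $\cl\tOmOne$ is closed and disjoint from $K$, $|x_n - z_n| \geq \dist(K, \cl\tOmOne) > 0$; combined with $\lambda_n \to 0$ this forces $|y_n - z_n| \to \infty$. Passing to subsequences gives $x_n \to x_* \in K$ and $v_n := (y_n - z_n)/|y_n - z_n| \to v_*$; since $[x_n, y_n] \subset \cl\OmNull$ has length tending to infinity in direction $v_n$, the ray $x_* + \mathbb{R}_+ v_*$ lies in $\cl\OmNull$, so $v_*$ is a recession direction of $\OmNull$ and, by \eqref{ConeCondition}, also of $\OmOne$.

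Finally split on the boundedness of $|x_n - z_n|$. If it is bounded, extract $z_n \to z_* \in \cl\tOmOne \subset \OmOne$, observe $v_* = (x_* - z_*)/|x_* - z_*|$, and apply the recession property at the interior point $z_*$ to put $x_* = z_* + |x_* - z_*| v_*$ in $\OmOne$, contradicting $x_* \in \Om$. If instead $|x_n - z_n| \to \infty$, then $z_n - x_n = -|x_n - z_n| v_n$ sends $z_n$ to infinity in direction $-v_*$ inside the convex open set $\OmOne$; fixing any $p \in \OmOne$ and passing to the limit along the segments $[p, z_n] \subset \OmOne$ shows that $-v_*$ is also a recession direction of $\cl\OmOne$. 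Having both $\pm v_*$ as recession directions makes $\cl\OmOne$ translation-invariant along the whole line $\mathbb{R} v_*$, which forces $\partial\OmOne$ to contain lines and contradicts the strict convexity assumption~\eqref{StrictConvexity}. The delicate point is this second subcase: because $\cl\tOmOne$ may itself be unbounded one cannot simply extract a limit of $z_n$, so the proof relies on the convex-combination argument above, which is also where strict convexity of $\OmOne$ is genuinely invoked.
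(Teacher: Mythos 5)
Your proof is correct, and the overall architecture (compactness/limit-direction argument for sufficiency, ray construction for necessity) matches the paper's, but both halves are executed somewhat differently and your necessity argument is cleaner. For necessity, the paper first normalizes the offending ray $L \subset \OmNull$ so that it emanates from a point of $\FreeBoundary\Om$ and is \emph{transversal} (which takes a small perturbation using strict convexity), and then builds a \emph{specific} $\tOmOne$ meeting the continuation of $L$ past that point. You instead work directly with a recession direction $v$ of $\OmNull$ that is not a recession direction of $\OmOne$: starting the ray at an arbitrary $z$ in an arbitrary $\tOmOne$ and placing $x_*$ just past the exit time from $\OmOne$ gives $\Delta(x_*)=+\infty$ for \emph{every} choice of $\tOmOne$ — a stronger statement, obtained without any appeal to transversality. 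For sufficiency, both you and the paper extract a limit direction $v_*$ of $(y_n - z_n)/|y_n - z_n|$ and note that $v_*$ lies in the recession cone of $\cl\OmNull$; the divergence is in how $|z_n|$ is controlled. The paper rules out $|z_n|\to\infty$ at once: the segments $[z_n,y_n]\subset\cl\OmNull$ then stretch to infinity in both directions from $x_*$, so $\cl\OmNull$ would contain a full line, contradicting strict convexity of $\OmNull$. You instead first invoke the cone condition to transfer $v_*$ into the recession cone of $\OmOne$, and in the unbounded subcase show $-v_*$ is \emph{also} a recession direction of $\cl\OmOne$, forcing a line in $\cl\OmOne$ and contradicting strict convexity of $\OmOne$. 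Both routes are valid; the paper's is a bit shorter because it kills the unbounded subcase before ever touching the cone condition, whereas yours uses the cone condition in both subcases and leans on the strict convexity of $\OmOne$ rather than of $\OmNull$.
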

\begin{proof}
Let us first verify the necessity of~\eqref{ConeCondition}. Assume it does not hold and there exists a ray~$L \subset \OmNull$ such that it cannot be shifted inside~$\OmOne$. Without loss of generality, we may assume~$L$ starts from $x\in \FreeBoundary\Om$ and does not intersect~$\OmOne$. By~\eqref{StrictConvexity}, we may also assume~$L$ is a transversal segment. Let us choose~$\tOmOne$ in such a manner that it intersects the continuation of~$L$ beyond~$x$, let~$z$ belong to that intersection. Choosing~$y$ as far as we wish on~$L$, we obtain that the ratio~$|x-y|/|x-z|$, and thus, the value~$\Delta(x)$, is unbounded.

Now we turn to the sufficiency of~\eqref{ConeCondition}. Let~$C\subset \Om$ be a compact set. First, we note that
\eq{
|x-z| \geq \dist(C,\tOmOne) > 0,\qquad x\in C, \ z\in \tOmOne.
}
Second, it suffices to prove that the quantity~$|y-x|$ is uniformly bounded whenever~$x\in C$,~$y\in \Om$, and there exists~$z\in \cl\tOmOne$ such that~$x\in [y,z]$. %We warn the reader that the condition~$z\in \cl\tOmOne$ is important here\footnote{Without this assumption, the quantity~$|y-x|$ need not be bounded even if~$x\in C$ and~$y\in \Om$. An example is provided by the domain~\eqref{ExampleWithHyperbolas}.}. 
Assume the contrary. Let there exist sequences~$\{x_n\}_{n}$, ~$\{y_n\}_{n}$, and~$\{z_n\}_{n}$ such that
\eq{
x_n\in C,\ y_n\in\Om,\ z_n \in \cl\tOmOne,\quad x_n \in [y_n,z_n],\quad \text{and}\quad |y_n - x_n| \to \infty.
}
Without loss of generality, we may assume that~$x_n \to x$ and~$(y_n - x_n)/|y_n-x_n|\to e$, where~$|e|=1$. By the closedness of~$\Om$, the ray~$L = x+e\cdot \R_+$ lies inside~$\Om$ entirely. By~\eqref{StrictConvexity},~$\Om$ does not contain lines, so~$|z_n|$ is uniformly bounded. We may assume~$z_n \to z \in \cl\tOmOne$. This means~$L \subset \Om$ cannot be shifted to lie inside~$\OmOne$, which contradicts~\eqref{ConeCondition}.
\end{proof}

\begin{proof}[Proof of Lemma~\ref{SplittingLemma}.]
Given a function~$\varphi \in \Class$, there exists a partition~$\I = \I_{1}\cup\I_{2}$, with~$\I_1, \I_2$ being disjoint (up to a common point) intervals such that
\eq{
\big[\av{\varphi}{\I_1},\av{\varphi}{\I_2}\big] \cap\tOmOne = \varnothing\quad \text{and}\quad \max\Big(\frac{|\I_1|}{|\I_2|}, \frac{|\I_2|}{|\I_1|}\Big) \leq \Delta(\av{\varphi}{\I}).
} 
The proof of this statement is completely similar to the proof of Lemma~$3.9$ in~\cite{StolyarovZatitskiy2016}. We apply it inductively to build a sequence~$\{\{\I_k^n\}_{k=1}^{2^n}\}_n$ of partitions of~$\I$ such that
\begin{enumerate}[1)]
\item for each~$n$ the partition~$\{\I_k^{n+1}\}_k$ is a subpartition of~$\{\I_k^n\}_k$, moreover, for each~$n$ and~$k$,~ $1 \leq k \leq 2^n$, one has~$\I^{n+1}_{2k-1} \cup \I^{n+1}_{2k} = \I^{n}_k$;
\item for each~$n$ and~$k$,~ $1 \leq k \leq 2^{n}$, the segment~$\Big[\av{\varphi}{\I_{2k-1}^{n+1}},\av{\varphi}{\I^{n+1}_{2k}}\Big]$ lies in~$\tilde{\Omega}$;
\item for each~$n$ and~$k$,~ $1 \leq k \leq 2^n$,~$\max\Big(\frac{|\I^{n+1}_{2k-1}|}{|\I^{n+1}_{2k}|},\frac{|\I^{n+1}_{2k}|}{|\I^{n+1}_{2k-1}|}\Big) \leq \Delta(\av{\varphi}{\I^n_k})$.
\end{enumerate}
Let~$\F_n$ be generated by~$\{\I_k^n\}_{k=1}^{2^n}$, let~$M_n = \E(\varphi\mid \F_n)$. The martingale~$M = \{M_n\}_n$ is the desired~$\tilde{\Omega}$-martingale (the proof of this assertion is identical to the proof of Theorem~$3.7$ in~\cite{StolyarovZatitskiy2016}). 
\end{proof}
\begin{Rem}
The assertion of Lemma~\ref{SplittingLemma} remains true if~$\Omega$ does not necessarily satisfy~\eqref{ConeCondition}\textup, however\textup,~$\varphi \in L_{\infty}$. The proof should be modified as follows. We choose a compact convex set~$C \subset \R^d$ such that~$\varphi\in C$ almost surely. We consider the function
\eq{
\Delta_C(x) = \sup\Set{\max\Big(1,\frac{|x-y|}{|x-z|}\Big)}{ x\in [y,z],\ y\in\Omega\cap C,\ z\in \cl\tOmOne \cap C},\quad x\in C\cap \Om.
}
This function is bounded since~$|x-z|$ is separated away from zero and $|x-y|$ is bounded. Now we may repeat the proof of Lemma~\ref{SplittingLemma} with~$\Delta_C$ in the role of~$\Delta$. 
\end{Rem}
\begin{proof}[Proof of Lemma~\ref{GluingLemma}.] Let~$M$ be a simple~$\OmStar$ martingale. Note that we may choose the sets~$C_a$ in Definition~\ref{OmegaMart} to be closed simplices. Let~$C$ be the union of such simplices over all atoms of all algebras~$\F_n$. In fact, this is union of a finite number of simplices. Thus,~$C$ is a compact subset of~$\OmStar$. Therefore,~$C$ is separated from~$\OmOne$ and does not intersect with the sets~$\Om_\eps$ for~$\eps$ sufficiently close to~$1$, here
\eq{
\Om_\eps = (1-\eps)\OmNull + \eps\OmOne;
}
we use the standard Minkowski addition. Fix some~$\eps$ close to~$1$ such that~$C\cap \Om_{\eps} = \varnothing$ and set~$\hOmOne = \Om_\eps$. Note that the corresponding lens~$\hOm = \cl\OmNull\setminus \hOmOne$ satisfies~\eqref{StrictConvexity} and~\eqref{ConeCondition}\footnote{Alternatively, the set~$\hOmOne$ may be constructed with the help of Theorem~\ref{OuterApproximation} below.}. What is more,~$\cl\OmOne \subset \hOmOne$. Thus,~$\hOmOne$ fits into formula~\eqref{ClassC} and~$M$ is an~$\hOm$-martingale. Consider the set
\eq{\label{OurOMM}
\OMM = \Set{\mu \in \M(\FixedBoundary \Om)}{\int\limits_{\R^d} x\,d\mu(x) \notin \cl\hOmOne}.
}
This set satisfies the two requirements on the set~$\OMM$ listed in~\eqref{TwoConditions}. 
%By definition, the condition~$\varphi\in \ClassC$ implies~$\mu_{\varphi|_{\J}}\in \OMM$ for any~$\J\subset \R$ (any subinterval of the line). 
It is high time to make our choice for the martingale~$\mathbb{M}$. Recall the martingale~$M$ is simple. The desired martingale is defined by the formula
\eq{
\mathbb{M}_n(w) = \mu_{M_{\infty}|_w},\qquad w\ \text{is an atom of}\ \F_n,
}
here~$\mu_{\zeta}$ denotes the distribution of the random variable~$\zeta$; we treat~$M_{\infty}|_w$ as a random variable on the probability space~$w$ equipped with the measure~$(P(w))^{-1}P|_w$. Let us prove that~$\mathbb{M}$ is an~$(\OMM,\DD)$-martingale with~$\OMM$ given in~\eqref{OurOMM}. We will firstly show that~$\mathbb{M}$ is indeed a martingale. For that we choose an arbitrary~$n$ and an atom~$w\in \F_n$. Let~$w_1,w_2,\ldots,w_j \in \F_{n+1}$ be the kids of~$w$. The martingale property of~$\mathbb{M}$ is
\eq{
P(w)\mu_{M_\infty|w} = \sum\limits_j P(w_j)\mu_{M_{\infty}|_{w_j}}.
}
To prove this identity in measures, we may test it against a Borel set~$A \subset \R^d$:
\eq{
P(w)P(M_{\infty}|_w\in A) = \sum\limits_j P(w_j)P(M_{\infty}|_{w_j}\in A),
}
which is true. It remains to verify the third property in Definition~\ref{OmegaMart}. We need to check that any convex combination~$\sum_{j}\alpha_j \mu_{M_\infty|_{w_j}}$ lies inside the set~$\OMM$. This means
\eq{
\sum\limits_j\int\limits_{\partial\OmNull} \alpha_j x\,d\mu_{M_{\infty}|_{w_j}}(x) \notin \cl\hOmOne.
}
This reduces to the fact that~$M$ is an~$\hOm$-martingale since
\eq{
\int\limits_{\partial\OmNull}x\,d\mu_{M_{\infty}|_{w_j}}(x) = M_{n+1}(w_j).
}
We apply Theorem~\ref{SZGluingTheorem} to~$\mathbb{M}$ and~$\OMM$ and obtain a function~$\varphi \in \ClassC$ with~$\mu_{\varphi} = \mathbb{M}_0$. It remains to notice that~$\mathbb{M}_0$ is the distribution of~$M_\infty$.
\end{proof}

\section{Proof of Proposition~\ref{CoincidenceProposition}}\label{S7}
First, the inequality
\eq{\label{eq71}
\BG_{\Omega, f}(x) \geq \BG_{\Omega^*, f}(x),\qquad x\in\Omega^*,
}
follows from~\eqref{DefinitionOfMinimalLocallyConcave} since whenever~$G\in \LC{\Om}{f}$, its restriction~$G|_{\OmStar}$ to~$\OmStar$ belongs to~$\LC{\OmStar}{f}$. Second, to prove the reverse inequality to~\eqref{eq71}, it suffices to construct a function~$G\in \LC{\Om}{f}$ such that
\eq{
G(x) = \BG_{\OmStar,f}(x),\qquad x\in \OmStar.
}
The construction of the function~$G$ is fairly straightforward, however, the verification of its local concavity will take some time. To construct~$G$, we will use special segments~$\ell \subset\Om$. For any~$x\in\FreeBoundary\Om$ let us choose some transversal (see Def.~\ref{TransversalDef}) segment~$\ell_x$ with the endpoint~$x$. Define the function~$G$ by the formula
\eq{\label{Continuation}
G(x) = 
\begin{cases}
\BG_{\OmStar,f}(x),\qquad &x\in \OmStar;\\
\lim\limits_{\genfrac{}{}{0pt}{-2}{y\to x}{y\in \ell_x}} \BG_{\OmStar,f}(y),\qquad &x\in\FreeBoundary\Om,
\end{cases}
}
where~$y$ approaches~$x$ along~$\ell_x$. Note that the limit in the formula always exists (though it might be equal to~$-\infty$) since~$\BG_{\OmStar,f}|_{\ell_x}$ is a concave function. 

\begin{Le}\label{TransLemma}
Let~$\Omega$ satisfy~\eqref{StrictConvexity}\textup, let~$x\in \FreeBoundary\Omega$. Let~$\ell$ be a transversal segment with the endpoint~$x$. Let~$s \subset \Om$ be another segment with the endpoint~$x$. Then\textup, the convex hull of~$\ell$ and~$s$ also belongs to~$\Om$ entirely.
\end{Le}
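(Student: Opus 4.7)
The plan is to let $a$ and $b$ denote the endpoints of $\ell$ and $s$ different from $x$ and to show that the triangle $T = \conv\{x, a, b\}$ lies entirely in $\Om$. Since $\cl\OmNull$ is convex and contains $x, a, b$, the inclusion $T \subset \cl\OmNull$ is automatic, so the whole task reduces to verifying that $T \cap \OmOne = \varnothing$.

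The main idea is to produce a single hyperplane $H$ through $x$ that separates both $\ell$ and $s$ from $\OmOne$. I will obtain $H$ by applying the Hahn--Banach separation theorem to the disjoint convex sets $\OmOne$ (open) and $s$: the result is a hyperplane with $\OmOne \subset H^+$ open and $s \subset H^-$ closed. Since $x$ lies both in $s$ and in $\partial\OmOne$, it must sit on $H$, so $H$ supports $\cl\OmOne$ at $x$; by~\eqref{StrictConvexity}, $H \cap \cl\OmOne = \{x\}$.

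It remains to show $\ell \subset H^-$ as well, and this is where the transversality hypothesis of Definition~\ref{TransversalDef} enters. Let $L$ be the line containing $\ell$; transversality says $L \cap \OmOne$ is a non-empty open interval on $L$, and since $x$ is one of its endpoints this interval has the form $(x, q)$ with $q \in \partial\OmOne \setminus \{x\}$. Strict convexity together with $H \cap \cl\OmOne = \{x\}$ forces $q \in H^+$, so on $L$ the ray out of $x$ toward $q$ lies in $H^+$ and the opposite ray lies in $H^-$. Because $\ell \cap \OmOne = \varnothing$ but $(x, q) \subset \OmOne$, the endpoint $a$ cannot lie on the $q$-side of $x$; hence it lies on the opposite ray, $a \in H^-$, and so $\ell \subset H^-$.

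Putting everything together, $\ell \cup s \subset H^-$, and convexity gives $T = \conv(\ell \cup s) \subset H^-$; since $H^+ \cap H^- = \varnothing$ and $\OmOne \subset H^+$, we conclude $T \cap \OmOne = \varnothing$, as needed. I expect the main conceptual obstacle to be the transversality step: without it, $\ell$ could in principle hug the $\OmOne$-side of $H$ without entering $\OmOne$ (this is especially concerning when $\partial\OmOne$ has corners at $x$), and the single-hyperplane separation scheme would collapse. Strict convexity plays the companion role of ensuring that $H$ meets $\cl\OmOne$ only at $x$, which is precisely what makes the line-based transversality argument go through cleanly.
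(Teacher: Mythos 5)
Your proof is correct, and it takes a genuinely different route from the paper's. The paper argues directly by contradiction: if some $y \in \OmOne$ lay in $\conv(\ell\cup s)$, one picks $z \in \OmOne$ on the extension of $\ell$ beyond $x$ (this exists by transversality), notes $[z,y]\subset\OmOne$ by convexity, and observes that $[z,y]$ must cross the side $s$ of the triangle $\conv\{x,a,b\}$ --- contradicting $s\subset\Om$. That argument is shorter, lives entirely in the $2$-plane of the triangle, and in fact does not use strict convexity of $\OmOne$ at all (only convexity), so it proves a slightly more general statement. Your argument instead manufactures a global witness: a separating hyperplane $H$ with $\OmOne$ on the strict side and $s\cup\ell$ on the other, from which the disjointness of $\conv(\ell\cup s)$ with $\OmOne$ falls out immediately. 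This is a clean, structural alternative; note, though, that the step ``$H\cap\cl\OmOne=\{x\}$ forces $q\in H^+$'' is exactly where you invoke strict convexity. Without it one would also need to handle $q\in H$ (in which case the entire line $L$ lies in $H$ and $\ell\subset H^-$ still follows), which is essentially what the paper's construction sidesteps. Both proofs are valid; the paper's is leaner, yours is more conceptual and exposes the supporting half-space explicitly.
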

\begin{proof}
It suffices to prove that the said convex hull is disjoint with~$\OmOne$. Assume the contrary, let~$y\in \OmOne$ lie in the convex hull of~$\ell$ and~$s$. Let~$z$ be a point on the continuation of~$\ell$ over the point~$x$ that is sufficiently close to~$x$. Since~$\ell$ is a transversal segment,~$z\in \OmOne$. The segment~$[z,y]$ then lies inside~$\OmOne$ since~$\OmOne$ is convex. On the other hand,~$[z,y]$, clearly, intersects~$s$, which contradicts~$s\subset \Om$.
\end{proof}
\begin{Rem}\label{TransRem}
In fact\textup, the said convex hull lies inside~$\OmStar,$ except for the point~$x$ itself.
\end{Rem}
\begin{proof}[Proof of Proposition~\ref{CoincidenceProposition}]
As we have said, it suffices to show~$G$ given in~\eqref{Continuation} is locally concave on~$\Om$ (in particular, we need to verify that~$G$ does not attain the value~$-\infty$). The verification of local concavity consists of checking the inequalities
\eq{\label{DesiredConvexity}
G(x) \geq\alpha G(a) + \beta G(b),\qquad x = \alpha a + \beta b,\ \alpha + \beta = 1,\ \alpha,\beta >0,  \quad [a,b]\subset \Om.
}
We are interested in the cases where one of the points~$x, a$, or~$b$ lies on~$\FreeBoundary \Omega$. Let~$a,b,x$ be distinct points.

\paragraph{Case~$x \in \FreeBoundary\Om$.} Note that in this case~$a$ and~$b$ do not lie on~$\FreeBoundary\Om$ by~\eqref{StrictConvexity}. Thus, we may assume they are interior points of~$\Om$. Consider the segment~$\ell_x$, some point~$y \in \ell_x$ (let~$y\ne x$), and the points~$a_\gamma$,~$b_\gamma$, and~$x_\gamma$ given by the rule
\eq{\label{GammaDefinition}
z_\gamma = \gamma y + (1-\gamma) z,\quad \gamma \in (0,1],
}
here~$z$ stands either for~$a$, or for~$b$, or for~$x$. Note that~$[a_{\gamma},b_{\gamma}] \subset \OmStar$ by Lemma~\ref{TransLemma} (with Remark~\ref{TransRem}). Thus,
\eq{\label{PreDesiredConvexity}
\BG_{\OmStar,f} (x_\gamma) \geq \alpha \BG_{\OmStar,f}(a_\gamma) + \beta \BG_{\OmStar,f}(b_\gamma).
}
Note that~$z_\gamma \to z$ as~$\gamma \to 0$. Thus,~\eqref{DesiredConvexity} is proved in this case since~$\BG_{\OmStar,f}$ is continuous at~$a$ and~$b$. 

The reasoning above also shows that~$G(x) > -\infty$. Indeed, we need to choose some points~$a,b\in \OmStar$ such that~$x\in [a,b]$ and use~\eqref{DesiredConvexity}.

\paragraph{Case~$a\in \FreeBoundary\Om$.} Consider the segment~$\ell_a$. Let~$y \in \ell_a$. We consider the points~$a_\gamma$, $b_\gamma$, and~$x_\gamma$ defined by the same formula~\eqref{GammaDefinition}. By Lemma~\ref{TransLemma} (Remark~\ref{TransRem}), these points lie inside~$\OmStar$ together with the segment~$[a_\gamma, b_\gamma]$. Thus,~\eqref{PreDesiredConvexity} holds true and~\eqref{DesiredConvexity} follows by a limit argument.
\end{proof}
\begin{Rem}
One may prove that the definition of the function~$G$ by~\eqref{Continuation} does not depend on the particular choice of transversal segments~$\ell_x,$~$x\in \FreeBoundary\Om$.
\end{Rem}

\section{Proof of Theorem~\ref{IntervalTheorem}}\label{S8}
Theorem~\ref{IntervalTheorem} immediately follows from~\eqref{Monotonicity}, Theorem~\ref{CircleTheorem}, and Proposition~\ref{CoincidenceProposition} once we prove the 'extension' theorem below.
\begin{Th}\label{ExtensionGen}
Let~$\Omega$ be a lens that satisfies~\eqref{StrictConvexity} and~\eqref{ConeCondition}. Assume~$\FreeBoundary\Omega$ is~$C^2$-smooth and~$f$ is an arbitrary function. Then\textup,
\eq{\label{ExtensionIdentity}
\BG_{\Om,f}(x) = \inf\Set{\BG_{\tOm,f}(x)}{\,\tOm\ \text{is an extension of}\ \Om}, \quad x\in \Om.
}
\end{Th}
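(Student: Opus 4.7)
The inequality $\BG_{\Om,f}(x) \leq \inf_{\tOm} \BG_{\tOm,f}(x)$ is immediate: every extension $\tOm$ satisfies $\Om \subset \tOm$ and $\FixedBoundary\tOm = \FixedBoundary\Om = \partial\OmNull$, so any $G \in \LC{\tOm}{f}$ restricts to an element of $\LC{\Om}{f}$ (local concavity passes to segments in $\Om$, and the boundary condition $G \geq f$ on $\partial\OmNull$ is unchanged). Taking infima first over $G$ and then over extensions yields the bound.

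For the reverse inequality $\inf_{\tOm}\BG_{\tOm,f}(x) \leq \BG_{\Om,f}(x)$, it suffices to show that given $\eps > 0$ and $G \in \LC{\Om}{f}$ with $G(x) \leq \BG_{\Om,f}(x) + \eps/2$, one can produce an extension $\tOm$ and a function $\tG \in \LC{\tOm}{f}$ with $\tG(x) \leq G(x) + \eps/2$. The $C^2$-smoothness of $\FreeBoundary\Om = \partial\OmOne$ together with~\eqref{StrictConvexity} allows us, for each small $\delta > 0$, to construct a strictly convex set $\tOmOne \subset \OmOne$ with $C^2$ boundary and $\cl\tOmOne \subset \OmOne$ (for example, a regularized inner parallel set at depth $\delta$, in the spirit of Theorem~\ref{OuterApproximation}). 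The corresponding $\tOm = \cl\OmNull \setminus \tOmOne$ is a lens satisfying~\eqref{StrictConvexity} and~\eqref{ConeCondition}, differing from $\Om$ only on a collar $\OmOne \setminus \tOmOne$ of width $O(\delta)$.

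I propose to define $\tG = G$ on $\Om$ and, for $y$ in the collar,
\[
\tG(y) = \sup\Set{\alpha G(a) + \beta G(b)}{a, b \in \Om,\ y = \alpha a + \beta b,\ [a,b] \subset \tOm,\ \alpha + \beta = 1,\ \alpha,\beta \geq 0}.
\]
By Lemma~\ref{DomainLemma} and Remark~\ref{DomainRemark} applied inside $\tOm$, every $y$ in the collar lies on a transversal chord with both endpoints in $\Om$, so $\tG(y) > -\infty$. The boundary inequality $\tG \geq f$ on $\partial\OmNull \subset \Om$ is automatic because it is already true for $G$, and for $\delta$ small enough that $x \notin \OmOne \setminus \tOmOne$ we obtain $\tG(x) = G(x)$, which delivers the required bound.

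The main obstacle is verifying that $\tG$ is locally concave on $\tOm$. On segments lying in $\Om$ this is inherited from $G$; on segments $[p,q] \subset \tOm$ that enter or traverse the collar, one has to compare chord-sup witnesses at different interpolating points. The plan is to use the $C^2$-smoothness of $\partial\tOm$ to produce a continuous selection of witness chords through each point of $[p,q]$, so that $t \mapsto \tG(p + t(q-p))$ becomes a pointwise limit of concave interpolations; strict convexity controls the uniqueness of supporting hyperplanes to $\tOmOne$, and the cone condition guarantees that the selected chords reach $\partial\OmNull$. Should this direct verification run into difficulty at tangential or collar-grazing segments, a fallback is to translate the argument into the martingale picture: using Lemmas~\ref{Le52} and~\ref{SimpleMartingale}, any simple $\tOm$-martingale whose trajectories enter the collar can be resplit along transversal chords that exit straight to $\partial\OmNull$, and since the collar has width $O(\delta)$ the corresponding modification of $\E f(M_\infty)$ is small. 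Either way, the combination of $C^2$-smoothness and strict convexity is what makes the approximation effective, and this continuous-selection step is the technical heart of the proof.
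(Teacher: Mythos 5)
Your easy direction is fine, but the reverse direction has several genuine gaps, and I think the central one is the choice of the extension formula. You define $\tG$ on the collar as a \emph{supremum} of chord interpolations $\alpha G(a)+\beta G(b)$. Local concavity of such a sup is not automatic and does not look recoverable by a ``continuous selection'' argument: for three collinear points $y_0$, $y_1=\tfrac12(y_0+y_2)$, $y_2$ in the collar, the witness chords $[a_0,b_0]$ and $[a_2,b_2]$ that nearly realize $\tG(y_0)$ and $\tG(y_2)$ need not produce a chord through $y_1$ whose interpolation dominates $\tfrac12(\tG(y_0)+\tG(y_2))$ --- the midpoints $\tfrac12(a_0+a_2)$, $\tfrac12(b_0+b_2)$ need not be in $\Om$, and even when they are, the barycentric coefficients do not combine correctly. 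The paper sidesteps this entirely by defining the extension as an \emph{infimum of supporting affine functions} coming from the superdifferential on $\FreeBoundary\Om$ (formula~\eqref{LightExtensionOut}). An infimum of affine functions is tautologically concave on each segment where the same witness $y$ is active; the nontrivial work is then to show that the infimum is finite and that a locally active supporting function exists (Lemma~\ref{SdLemma}), which is precisely where the $C^2$ estimates enter.

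A second gap is that you start from an arbitrary $G\in\LC{\Om}{f}$ close to $\BG_{\Om,f}(x)$ at the one point $x$. Such a $G$ may be $+\infty$ somewhere, may have no superdifferential at $\FreeBoundary\Om$, and in general carries no usable regularity near the free boundary. The paper works with $\BG_{\Om,f}$ itself, proves it admits a supporting affine function at every $x\in\FreeBoundary\Om$ (Proposition~\ref{ExistenceOfTangent}, using local Lipschitz control and minimality), and --- this is the crucial quantitative ingredient you are missing --- perturbs $\BG_{\Om,f}$ by adding $\eps g$ with $g$ strongly convex (Corollary~\ref{QuadratischeCor}) so that the supporting planes satisfy a strict inequality $G(y)\leq G(x)+L[G,x](y-x)-c_K|x-y|^2$. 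That quadratic margin is exactly what absorbs the error terms from Proposition~\ref{CubicProp} and makes the extension locally concave on a strictly larger lens. Without some such strictification, the extension has no room to breathe; the boundary estimate~\eqref{C2Estimate2} only gives $O(|x-y|^3)$, so the $\eps|x-y|^2$ slack is what wins.

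Finally, Theorem~\ref{ExtensionGen} assumes $f$ is merely ``arbitrary'' and $\FixedBoundary\Om$ is not assumed smooth, whereas the core extension step (Theorem~\ref{Extension}) needs $C^2$ smoothness of \emph{both} boundaries and of $f$. The paper bridges this by first extending $\BG_{\Om,f}$ \emph{outward through the fixed boundary} to a larger domain $\cl\Om_{-1}\setminus\OmOne$ with real-analytic fixed boundary (Theorem~\ref{OuterApproximation}, Lemma~\ref{LocalConcavityOfOuterExtension}), then feeding the restriction of that extension to $\partial\Om_{-1}$ as new boundary data --- now continuous --- into Theorem~\ref{Extension}. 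Your proposal has no counterpart to this reduction, so even if the collar extension were fixed, it would only apply under much stronger smoothness hypotheses than the theorem states. The martingale ``fallback'' is too vague to fill any of these holes.
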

When proving Theorem~\ref{ExtensionGen}, we may assume, without loss of generality, that~$\BG_{\Omega,f}$ is finite. Then, the identity~\eqref{ExtensionIdentity} may be reformulated: for any~$x\in\Om$ and any~$\eps > 0$ there exists an extension~$\tOm$ such that 
\eq{
\BG_{\Om,f}(x) \leq \BG_{\tOm,f}(x) \leq \BG_{\Om,f}(x) + \eps.
}
Note that~$\tOm$ may depend on~$\eps$ and~$x$. In the case of smooth boundaries and sufficiently regular~$f$, we will prove a stronger statement, which is the main step toward the proof of Theorem~\ref{ExtensionGen}.
\begin{Th}\label{Extension}
Let~$\Omega$ be a lens that satisfies~\eqref{StrictConvexity} and~\eqref{ConeCondition}. Assume the boundaries~$\FixedBoundary\Omega,$~$\FreeBoundary\Omega$ are~$C^2$-smooth and the function~$f$ is~$C^2$-smooth as well. Then\textup, for any~$\eps > 0$ there exists an extension~$\tOm$ such that
\eq{
\BG_{\Om,f}(x) \leq \BG_{\tOm,f}(x) \leq \BG_{\Om,f}(x) + \eps
}
for any~$x\in \Om$.
\end{Th}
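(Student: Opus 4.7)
The lower bound $\BG_{\Om,f}(x) \leq \BG_{\tOm,f}(x)$ holds for every extension $\tOm$, since any $G \in \LC{\tOm}{f}$ restricted to $\Om \subset \tOm$ lies in $\LC{\Om}{f}$ (the fixed boundary is unchanged). So the task is to produce, for each $\eps>0$, a single extension $\tOm$ together with a function $\tilde G \in \LC{\tOm}{f}$ satisfying $\tilde G \leq \BG_{\Om,f}+\eps$ on $\Om$; then $\BG_{\tOm,f} \leq \tilde G$ finishes the argument. I may and do assume $G_0 := \BG_{\Om,f}$ is finite.

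I will build $\tOm$ by shrinking $\OmOne$ slightly along the inward normal. The $C^2$-smoothness and strict convexity of $\FreeBoundary\Om = \partial\OmOne$ guarantee that for small $h>0$ the inner parallel body $\tOmOne = \{x \in \OmOne \colon \dist(x,\partial\OmOne)>h\}$ is strictly convex with $C^2$ boundary, $\cl\tOmOne \subset \OmOne$, and the map $\Psi(x,s)=x-s\,n(x)$, where $n(x)$ is the outward unit normal to $\OmOne$ at $x$, is a $C^1$-diffeomorphism from $\partial\OmOne \times [0,h]$ onto the closure of the shell $S = \tOm \setminus \Om$. To construct $\tilde G$, I first approximate $G_0$ from above on a one-sided neighborhood of $\partial\OmOne$ inside $\Om$ by a $C^2$-smooth locally concave function $g$ with $G_0 \leq g \leq G_0 + \eps/4$ there, obtained by mollifying $G_0 + \eps/8$ in boundary-fitted coordinates, using that mollification preserves local concavity and that $G_0$ is locally Lipschitz in the interior of $\Om$. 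Then I set $\tilde G = G_0+\eps$ on $\Om$ and $\tilde G(\Psi(x,s)) = g(x) + \tfrac{\eps}{2} - Cs$ on $S$, with a large constant $C>0$ to be chosen. The boundary condition $\tilde G \geq f$ on $\FixedBoundary\tOm = \partial\OmNull$ is trivial from $\tilde G|_\Om = G_0 + \eps \geq f + \eps$.

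Local concavity is the substance of the proof. On segments inside $\Om$ it is inherited from $G_0$. On segments inside $S$, the coordinate form $\tilde G = g(x) - Cs + \eps/2$ has Hessian along any segment dominated by $-C$ on the $s$-direction together with $\|g\|_{C^2}$ and a curvature contribution from $\partial\OmOne$; taking $C$ large makes this strictly negative. The main obstacle is concavity across $\FreeBoundary\Om$: for $[a,b]\subset \tOm$ crossing $\partial\OmOne$ transversally at $x_0$ in direction $v$ pointing into $S$, the problem reduces to the one-sided derivative inequality $\tilde G'(x_0^+) \leq \tilde G'(x_0^-)$. The right derivative equals $\nabla g(x_0)\cdot v - C\langle v,n(x_0)\rangle$; the left derivative of $G_0+\eps$ along the $\Om$-part is at least $-\|\nabla g\|_\infty$, by concavity together with $G_0 \leq g$ and a limit argument that compares a concave function to a smooth majorant at a boundary point. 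Strict convexity of $\OmOne$ and $b \in \tOm$ force a quantitative lower bound on $\langle v,n(x_0)\rangle$, so $C$ large in terms of $\|g\|_{C^1}$ and the geometry of $\partial\OmOne$ produces the inequality. The delicate subcase is near-tangent crossings, where $\langle v,n(x_0)\rangle$ is small; there the second-order geometry rescues us: the length of $[a,b]\cap S$ and its penetration depth are both controlled by the curvature of $\partial\OmOne$, and the quadratic deficit of $\partial\OmOne$ from its tangent hyperplane, combined with the $C^2$ bound on $g$ and the $\eps/2$ slack between $g$ and $G_0+\eps$, yields the one-sided inequality once $h$ is chosen small in terms of $\eps$ and the fixed $C^2$-data. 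With these choices $\tilde G \in \LC{\tOm}{f}$ and the upper bound $\BG_{\tOm,f}(x) \leq G_0(x) + \eps$ on $\Om$ follows.
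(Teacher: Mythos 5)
Your approach differs from the paper's (which extends by taking an infimum of supporting affine functions over the free boundary, after first establishing a cubic error estimate for the tangent planes of $\BG$ along $\FreeBoundary\Om$, cf.\ Propositions~\ref{C2Bound} and~\ref{CubicProp}), and unfortunately it has several real gaps.

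First, your function $\tilde G$ is discontinuous across $\FreeBoundary\Om$, which already rules out local concavity. On $\Om$ you set $\tilde G = G_0 + \eps$, so on $\partial\OmOne$ the value is $G_0 + \eps$; but as you enter the shell $S$ the value is $g(x)+\tfrac{\eps}{2}-Cs \leq G_0(x)+\tfrac{\eps}{4}+\tfrac{\eps}{2} = G_0(x)+\tfrac{3\eps}{4}$, so $\tilde G$ drops by at least $\eps/4$ in the limit from the $S$ side. Any segment of $\tOm$ that crosses $\partial\OmOne$ transversally has this jump at an interior point, and a concave function restricted to an interval is continuous on its interior. Any fix requires the extension to agree \emph{exactly} with the inner function on $\partial\OmOne$; the paper achieves this automatically because its formula~\eqref{LightExtensionOut} literally evaluates $G$ at free-boundary points, whereas your $g$ is only an approximation with a built-in $\eps/8$ offset.

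Second, the claim ``mollification preserves local concavity'' is false for locally concave functions on a non-convex domain such as a lens. Mollification $G*\phi_\eta$ is concave along a segment $\ell$ only if $\ell + \supp\phi_\eta \subset \Om$, and precisely near $\FreeBoundary\Om$ — the only place you need $g$ — this inclusion fails because of the hole $\OmOne$; passing to boundary-fitted coordinates does not help since concavity along Euclidean segments is the relevant notion, and it is not preserved under the coordinate change. The paper does not smooth $\BG$ at all; instead it adds $\eps g$ with $g$ strongly concave and $C^2$ (Corollary~\ref{QuadratischeCor}), and the entire burden of ``second-order regularity at the free boundary'' is carried by the cubic bound~\eqref{C2Estimate2}, not by any regularization of $\BG$.

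Third, your treatment of near-tangent crossings is not an argument. You assert that ``the quadratic deficit of $\partial\OmOne$ from its tangent hyperplane, combined with the $C^2$ bound on $g$ and the $\eps/2$ slack, yields the one-sided inequality once $h$ is chosen small,'' but this is exactly the hard part. The slack you introduced is an additive constant ($\eps/2$), while the quantities you need to dominate, such as the length of $[a,b]\cap S$ (of order $\sqrt{h}$ for a near-tangent line) times the slope mismatch at $x_0$, degrade with the angle. What actually closes this case in the paper is the quantitative strict concavity estimate~\eqref{Quadratische}, which gives a gain $-c_K|x-y|^2$ between tangent planes at nearby free-boundary points; that gain, in turn, rests on the cubic estimate of Proposition~\ref{CubicProp}. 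Nothing in your construction produces a quantitative gain that scales correctly with the tangency angle, so the near-tangent case is a genuine hole rather than a ``delicate subcase.''
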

The proof of this theorem follows the proof of Theorem~$4.1$ in~\cite{StolyarovZatitskiy2016} with some modifications. These modifications do not require new ideas, however, some re-phrasing is needed to work in~$\R^d$ with arbitrary~$d$ instead of~$\R^2$. The condition~$f\in C^2$ may be immediately replaced with~$f$ being merely continuous by approximation in the uniform norm. Here we have used the obvious inequalities
$$
\BG_{\Om, f} \leq \BG_{\Om, g} \leq \BG_{\Om, f + \eps} = \BG_{\Om, f} + \eps,
$$
provided $f \leq g\leq f +\eps$.

\begin{Cor}
The statement of Theorem~\ref{Extension} holds true if~$f\in C(\FixedBoundary\Omega)$.
\end{Cor}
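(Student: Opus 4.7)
The plan is to approximate the continuous $f$ uniformly from above by a $C^2$-smooth function $g$, apply Theorem~\ref{Extension} to $g$, and transfer the resulting estimate back to $f$ via the monotonicity sandwich recalled immediately before the corollary: whenever $f\leq g\leq f+\delta$ on $\FixedBoundary\Om$, one has $\BG_{\Om,f}\leq\BG_{\Om,g}\leq\BG_{\Om,f}+\delta$, and the analogous bound on any extension $\tOm$. This follows from the monotonicity of $\BG$ in the boundary datum (a pointwise larger $f$ shrinks $\LC{\Om}{f}$) together with the trivial identity $\BG_{\cdot,\, f+c}=\BG_{\cdot,\, f}+c$ for constants $c$.

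Fix $\eps>0$. First I would produce a $C^2$-smooth function $\tilde g\colon \FixedBoundary\Om\to\R$ with $\sup_{\FixedBoundary\Om}|\tilde g - f|<\eps/3$. Since $\FixedBoundary\Om=\partial\OmNull$ is a $C^2$-smooth (hence paracompact) submanifold of $\R^d$, this is a standard partition-of-unity-and-mollify construction: cover $\FixedBoundary\Om$ by a locally finite atlas of coordinate patches with compact closures, mollify $f$ in each chart to within $\eps/3$, and glue the local approximants by a smooth partition of unity, noting that the uniform bound is preserved under convex combinations. Setting $g=\tilde g+\eps/3$ gives a $C^2$-smooth $g$ with $f\leq g\leq f+2\eps/3$ pointwise on $\FixedBoundary\Om$. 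Next I would apply Theorem~\ref{Extension} to $g$ with tolerance $\eps/3$, producing an extension $\tOm$ of $\Om$ with
\eq{
\BG_{\Om,g}(x)\leq\BG_{\tOm,g}(x)\leq\BG_{\Om,g}(x)+\tfrac{\eps}{3},\qquad x\in\Om.
}
Since $\FixedBoundary\tOm=\partial\OmNull=\FixedBoundary\Om$, the boundary datum $f$ is meaningful on $\tOm$ as well, and chaining the monotonicity yields, for every $x\in\Om$,
\eq{
\BG_{\Om,f}(x)\leq\BG_{\tOm,f}(x)\leq\BG_{\tOm,g}(x)\leq\BG_{\Om,g}(x)+\tfrac{\eps}{3}\leq\BG_{\Om,f}(x)+\eps;
}
the leftmost inequality holds because restriction $G\mapsto G|_\Om$ sends $\LC{\tOm}{f}$ into $\LC{\Om}{f}$, so the infimum over the smaller class dominates the one over the larger. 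This is exactly the conclusion of Theorem~\ref{Extension} for $f\in C(\FixedBoundary\Om)$.

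I do not expect a serious obstacle: the monotonicity sandwich is purely algebraic, and the only genuinely technical ingredient is the uniform smooth approximation of a continuous function on the possibly non-compact $C^2$ manifold $\partial\OmNull$, which is a routine piece of manifold calculus. The corollary is thus a soft consequence of Theorem~\ref{Extension}, as indicated by the paragraph of the paper preceding it.
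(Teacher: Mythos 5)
Your proof is correct and takes essentially the same route as the paper: approximate $f$ uniformly from above by a $C^2$ function $g$, apply Theorem~\ref{Extension} to $g$, and transfer via the monotonicity sandwich $\BG_{\cdot,f}\leq\BG_{\cdot,g}\leq\BG_{\cdot,f}+\delta$ together with the fact that the fixed boundary $\partial\OmNull$ is unchanged by an extension. The paper merely states this in one line immediately before the corollary; you have supplied the (standard) details.
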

The method of~\cite{StolyarovZatitskiy2016} was to perturb the function~$\BG_{\Om,f}$ a little bit to make it strongly concave and then to extend it through the free boundary. The reasoning naturally splits into two steps: first, we study the boundary behavior of minimal locally concave functions and, second, use this structure to construct the extension. 

\subsection{Boundary behavior of minimal locally concave functions}
\begin{Def}
Let~$\omega \subset \R^d$. We say that two points~$x,y\in\omega$ see each other if~$[x,y]\subset \omega$. The set
\eq{
\Vis_x^\om = \set{y\in \omega}{x\ \text{and}\ y\ \text{see each other in}\ \omega}
}
is called the set of points visible from~$x$.
\end{Def}
We will simply write~$\Vis_x$ instead of~$\Vis_x^\om$ when the ambient set~$\om$ is clear from the context.
\begin{St}\label{BoundedVisibility}
Let~$\Omega$ be a lens that satisfies~\eqref{StrictConvexity} and~\eqref{ConeCondition}. The set~$\Vis_x$ is compact whenever~$x\in\FreeBoundary\Omega$. The diameter of~$\Vis_x$ is uniformly bounded when~$x$ runs through a compact subset of~$\FreeBoundary \Om$.
\end{St}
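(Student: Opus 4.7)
The plan is to prove both assertions simultaneously by establishing the stronger statement that $\bigcup_{x \in K} \Vis_x$ is bounded for every compact $K \subset \FreeBoundary \Om$. Combined with the easy closedness of each $\Vis_x$, this will give the compactness of $\Vis_x$ (taking $K = \{x\}$) and the uniform diameter bound in one shot.

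First I will check closedness. Since $\OmOne$ is open and $\cl\OmNull$ is closed, $\Om$ is closed in $\R^d$. If $y_n \in \Vis_x$ and $y_n \to y$, then for every $s \in [0,1]$ the point $(1-s)x + s y_n$ lies on $[x, y_n] \subset \Om$ and converges to $(1-s)x + s y$, which therefore lies in $\Om$. Hence $y \in \Vis_x$.

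For uniform boundedness I will argue by contradiction: suppose there exist $x_n \in K$ and $y_n \in \Vis_{x_n}$ with $|y_n - x_n| \to \infty$. Passing to a subsequence, $x_n \to x_\infty \in K$ and $e_n := (y_n - x_n)/|y_n - x_n| \to e \in S^{d-1}$. For any fixed $T > 0$, once $|y_n - x_n| \geq T$, the point $x_n + T e_n$ lies on $[x_n, y_n] \subset \Om$; letting $n \to \infty$ and using closedness of $\Om$ gives $x_\infty + Te \in \Om$. Thus the whole ray $R = \{x_\infty + t e : t \geq 0\}$ lies in $\Om = \cl\OmNull \setminus \OmOne$.

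The last step is where I expect the main obstacle. From $R \subset \cl\OmNull$ with $x_\infty \in \cl\OmNull$, the direction $e$ lies in the recession cone of $\cl\OmNull$, and by~\eqref{ConeCondition} it is also a recession direction of $\OmOne$ (pick any ray in $\OmNull$ with direction $e$ and apply the Cone Condition to produce a parallel ray inside $\OmOne$). The key geometric sub-claim is that when $\OmOne$ is strictly convex, $x \in \partial \OmOne$, and $e \neq 0$ is a recession direction of $\OmOne$, one has $x + te \in \OmOne$ for all $t > 0$. To prove it, I will take $y_k \in \OmOne$ with $y_k \to x$; the recession property gives $x + te = \lim_k (y_k + te) \in \cl\OmOne$ for all $t \geq 0$. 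Either some $x + s_0 e \in \OmOne$, in which case the open segment from $x \in \partial\OmOne$ to $x + s_0 e \in \OmOne$ lies in $\OmOne$ and the recession property extends the inclusion to all $t > 0$; or the entire open half-line $\{x + te : t > 0\}$ lies in $\partial\OmOne$, giving a linear segment in the boundary and violating~\eqref{StrictConvexity}. Applying the sub-claim at $x_\infty$ with direction $e$ then contradicts $R \cap \OmOne = \emptyset$, completing the argument.
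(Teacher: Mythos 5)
Your proof is correct and follows essentially the same route as the paper: closedness of $\Vis_x$ from closedness of $\Om$, then a contradiction via a sequence $x_n\to x_\infty$, $(y_n-x_n)/|y_n-x_n|\to e$, yielding a ray in $\Om$ whose direction $e$ is forced by the cone condition to be a recession direction of $\OmOne$, contradicting strict convexity at $x_\infty\in\cl\OmOne$. Your write-up simply supplies the details of the final recession/strict-convexity step that the paper leaves terse.
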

\begin{proof}
It is clear that the set~$\Vis_x$ is closed (since~$\Omega$ is closed). So we only need to prove the second assertion. Assume the contrary, let there exist a sequence~$\{x_n\}_n$ with values in a compact subset of~$\FreeBoundary\Omega$ and a sequence~$\{y_n\}$ such that~$|y_n| \to \infty$ and~$[x_n,y_n] \subset \Om$. Without loss of generality,~$x_n \to x\in\FreeBoundary \Omega$ and~$y_n/|y_n|\to y\in S^{d-1}$. Then, by the closedness of~$\Om$, the ray~$x+\R_{+}y$ lies inside~$\Om$. By~\eqref{ConeCondition}, there exists~$z\in\OmOne$ such that~$z+\R_+y\subset \OmOne$. This contradicts the strict convexity of~$\OmOne$ since~$x\in \cl\OmOne$.
\end{proof}
\begin{Rem}
The set~$\Vis_x$ is not necessarily bounded if~$x\notin\FreeBoundary\Omega$ as the left picture on Figure~\ref{RayCondition} shows.
\end{Rem}
We will often use the following simple principle (compare with Fact~$B.3$ in~\cite{StolyarovZatitskiy2016}).

\begin{Fact}\label{Slicing}
Let~$\omega$ be a strictly convex subset of~$\R^d$. Suppose that there are positive $r$ and $R$ and a point $p \in \omega\cap L$ such that $B_r(p)\subset \omega$\textup, $B_R(p) \supset \omega \cap L$. Then for $C = \frac{r+R}{r}$ and any $y \in \partial \omega$ one may find $z \in \partial \omega \cap L$ such that $|y-z| \leq C \dist(y,L)$.
%Let~$\omega$ be a strictly convex subset of~$\R^d$. Let~$L$ be an affine hyperplane such that~$\omega \cap L$ is bounded. If the interior of~$\omega\cap L$ is non-empty in the relative topology of~$L,$ then there exists a constant~$C$ such that for any sufficiently small~$x\in\R^d$ and any $y \in \omega$ one may find $z \in \omega \cap L$ such that $|y-z| \leq C\dist(y,L)$. %$y \in \omega \cap (L+x)$ one may find $z \in \omega \cap L$ such that $|y-z| \leq C|x|$.  
%\eq{
%\omega \cap (L+x) \subset B_{C|x|}(\omega \cap L)
%} 
\end{Fact}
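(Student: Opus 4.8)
The plan is to work entirely inside the two-dimensional plane spanned by the situation, or rather to reduce to a purely planar picture: the line $L$, the point $y \in \partial \omega$, and its projection onto $L$. First I would set up coordinates so that $L$ is a coordinate axis through $p$ and drop the perpendicular from $y$ to $L$, calling its foot $y_0$ and writing $h = \dist(y, L) = |y - y_0|$. The claim is vacuous (or trivial) when $h = 0$, so assume $h > 0$; also if $y \in L$ there is nothing to prove, so $y \notin L$. The key geometric idea is to use the inscribed ball $B_r(p)$ as a ``pivot'': the segment from $y$ through a suitable point of $\partial B_r(p)$ must exit $\omega$ through $\partial \omega \cap L$ because $\omega \cap L$ is comparatively long ($B_r(p) \cap L$ has length $2r$, and $\omega \cap L \supset B_r(p) \cap L$), while $y$ itself is at bounded distance $R$ from $p$ along the transversal direction (since $y \in \omega \cap \{$the affine plane through $y$ and $L$... actually $y \in \omega$ and $\omega \cap L \subset B_R(p)$ controls the length of $L \cap \omega$, but I must be slightly careful: what is bounded by $R$ is $\omega \cap L$, and $y$ lies in $\omega$, not necessarily near $L$).

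Here is the cleaner route I would take. Let $z$ be one of the two endpoints of the segment $\omega \cap L$, chosen on the same side of $y_0$ as is convenient; note $|y_0 - z| \le R + |p - y_0|$, but $|p-y_0| \le R$ too since $y_0 \in \omega \cap L \subset B_R(p)$, so $|y_0 - z| \le 2R$ — wait, this does not yet involve $r$ and gives a bound by $R$ times something, not by $C h$. The point is that when $h$ is small the bound $|y-z| \le \sqrt{h^2 + |y_0-z|^2}$ is roughly $|y_0 - z|$, which need not be $O(h)$; so the naive endpoint is the wrong $z$. Instead, I would choose $z$ to be the point where the ray from a point $q \in \partial B_r(p)$ through $y$ (or the ray from $y$ through $q$) first meets $L$ again, exploiting convexity: the chord of $\omega$ through $y$ and through the ``far'' point $q^- \in \partial B_r(p) \cap L$ on the opposite side stays in $\omega$ (both endpoints in $\omega$, $\omega$ convex), so it meets $\partial\omega$ at a point $z$ beyond $q^-$, hence $z \in L$... no, the chord through $y$ and $q^-$ need not lie along $L$.

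Let me commit to the intercept computation, which is the honest way. Put $L$ as the $x$-axis, $p$ at a point of $L$ with $B_r(p) \subset \omega$, so in particular $p \pm r e_1 \in \omega \cap L$ and hence the segment $\omega \cap L$ contains $[p - r e_1, p + r e_1]$; also $\omega \cap L \subset B_R(p)$ so $\omega \cap L = [p + a e_1, p + b e_1]$ with $-R \le a \le -r < 0 < r \le b \le R$. Given $y \in \partial\omega$ with foot $y_0$ on $L$ and $h = |y - y_0| > 0$: consider the point $q = p - \sign(\langle y_0 - p, e_1\rangle)\, r\, e_1 \in \partial B_r(p) \cap L \subset \omega$. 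The segment $[y, q]$ lies in $\omega$ by convexity; extend it past $y$. I claim the extension meets $L$ (on the far side of $y_0$) at a point $z$ with $\omega$-membership forced, and with $|y - z| \le C\, h$ by similar triangles: the triangle with vertices $q$, $y_0$, $z$ has $y$ on segment $[q,z]$ at height $h$ over the base $L$, while $q$ is on $L$ at horizontal distance at least... hmm, I need the extension to actually exit $\omega$ at a point of $L$, which requires $z \in \omega \cap L$, i.e.\ $|z - p| \le R$. This is where the strict convexity and the $R$-bound on $\omega \cap L$ combine: $z \in \partial\omega$ is where $[q, y]$ extended leaves $\omega$; since it leaves along a direction that is ``mostly along $L$'' (as $h \to 0$), and $\omega \cap L$ has length $\ge 2r$ on the relevant side, one gets that the exit point is this $z \in L$ with $|q - z| \ge$ something like $2r$, forcing (by similar triangles with the small height $h$ at $y$ versus the full height, which is $0$, at both $q$ and $z$) the bound. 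The main obstacle — and the step I would spend the most care on — is verifying that the ray from $q$ through $y$, extended, really does exit $\omega$ through $L$ rather than through some other part of $\partial\omega$, and pinning the constant: this is exactly where $C = \frac{r+R}{r}$ enters, via the elementary similar-triangles identity $\frac{|y - z|}{|q - z|} = \frac{h}{h'}$ where $h'$ is the height of $q$... but $q \in L$ so $h' = 0$, signalling that I should instead take the pivot $q$ to be a point of $\partial B_r(p)$ \emph{off} $L$ at height $r$, say $q = p + r e_1^\perp$ scaled appropriately, so that the two heights $r$ (at $q$) and $h$ (at $y$) and $0$ (at $z \in L$) give $\frac{|y-z|}{|q-z|} = \frac{h}{r}$ and $|q - z| \le |q - p| + |p - z| \le r + R$, whence $|y - z| \le \frac{h}{r}(r + R) = C h$; the remaining thing to check is that this particular $z$, the point where line $qy$ meets $L$, indeed lies on $\partial\omega$, which follows because $q, y \in \omega$, the chord $[q, z']$ for $z'$ the \emph{exit} point is in $\omega$, $z' \in L$ (convexity plus $q, y$ on opposite... ) — I will arrange the side of $q$ so that $y$ is strictly between $q$ and the exit point on $L$, using that $y \notin L$ while $q$ is at height $r$ on the opposite side of $L$ from $y$, so the segment $[q,\text{something on }L]$ crosses height $h$ exactly once near $y$. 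Closing this crossing/side bookkeeping rigorously is the only real work; everything else is the one-line similar-triangles estimate and the trivial bounds $|p - z| \le R$, $|q - p| = r$.
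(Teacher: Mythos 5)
Your pivot/similar-triangles computation correctly bounds the distance from $y$ to the point $z$ where the line through $q$ and $y$ meets $L$, but this $z$ is \emph{not} in $\partial\omega$, and that is exactly what the Fact demands. With $q \in \partial B_r(p)$ placed on the opposite side of $L$ from $y$, you have $q$ in the interior of $\omega$ (since $\cl B_r(p)\subset\omega$) and $y\in\partial\omega$, so the open segment $(q,y)$ lies in $\interior\omega$; the crossing point $z=[q,y]\cap L$ sits on that open segment, hence $z\in\interior\omega$ and $z\notin\partial\omega$. Your closing remark --- ``arrange the side of $q$ so that $y$ is strictly between $q$ and the exit point on $L$'' --- cannot be carried out: if $q$ and $y$ lie on opposite sides of $L$, the crossing is between $q$ and $y$, not beyond $y$; if they lie on the same side, the line $qy$ meets $L$ only on an extension past one of them, and the point where the \emph{ray} from $q$ through $y$ exits $\omega$ is $y$ itself, not a point of $L$. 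So no choice of $q\in B_r(p)$ makes the intersection with $L$ land on $\partial\omega$, and the proof does not close.

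The fix is to commit to a genuine boundary point from the start. Reduce to the two-dimensional plane $\Pi$ spanned by $y$, its foot $y_0$ on $L$, and $p$, and let $z$ be the endpoint of the chord $\cl\omega\cap L\cap\Pi$ on the side of $y_0$ (this is a point of $\partial\omega\cap L$). Since $y\in\partial\omega$ it lies outside $\interior\omega$, hence outside the convex ``cone'' $\Gamma=\conv\bigl(\cl B_r(p)\cup\{z\}\bigr)\subset\cl\omega$. Writing $|z-p|=b\in[r,R]$ and working in coordinates in $\Pi$ with $L$ the first axis and $p$ the origin, the condition $y\notin\interior\Gamma$ forces, in the region between $p$ and $z$, that $y$ lie on or above the tangent line from $z$ to $B_r(p)$, giving $|z-y_0|\le \frac{\sqrt{b^2-r^2}}{r}\,h\le \frac{\sqrt{R^2-r^2}}{r}\,h$; in the region beyond $z$, the supporting hyperplane of $\omega$ at $z$ has $L$-component of its unit normal at least $r/b\ge r/R$ (because it must keep $B_r(p)$ on one side), giving $|z-y_0|\le \frac{R}{r}h$; and in the region short of $p$, $y\notin\interior B_r(p)$ gives $h^2\ge r^2-|y_0-p|^2$ and again $|z-y_0|\le\frac{R}{r}h$. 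In every case $|y-z|=\sqrt{|z-y_0|^2+h^2}\le\frac{\sqrt{R^2+r^2}}{r}h\le\frac{R+r}{r}\,h$. Your pivot computation is a correct estimate for an interior point and a useful warm-up, but it cannot by itself deliver a point of $\partial\omega\cap L$, which is the content of the statement.
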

For the hint to the proof see~Figure~\ref{Fact_proof}. 
\begin{figure}
 \begin{center}
    \includegraphics[width=0.45\textwidth]{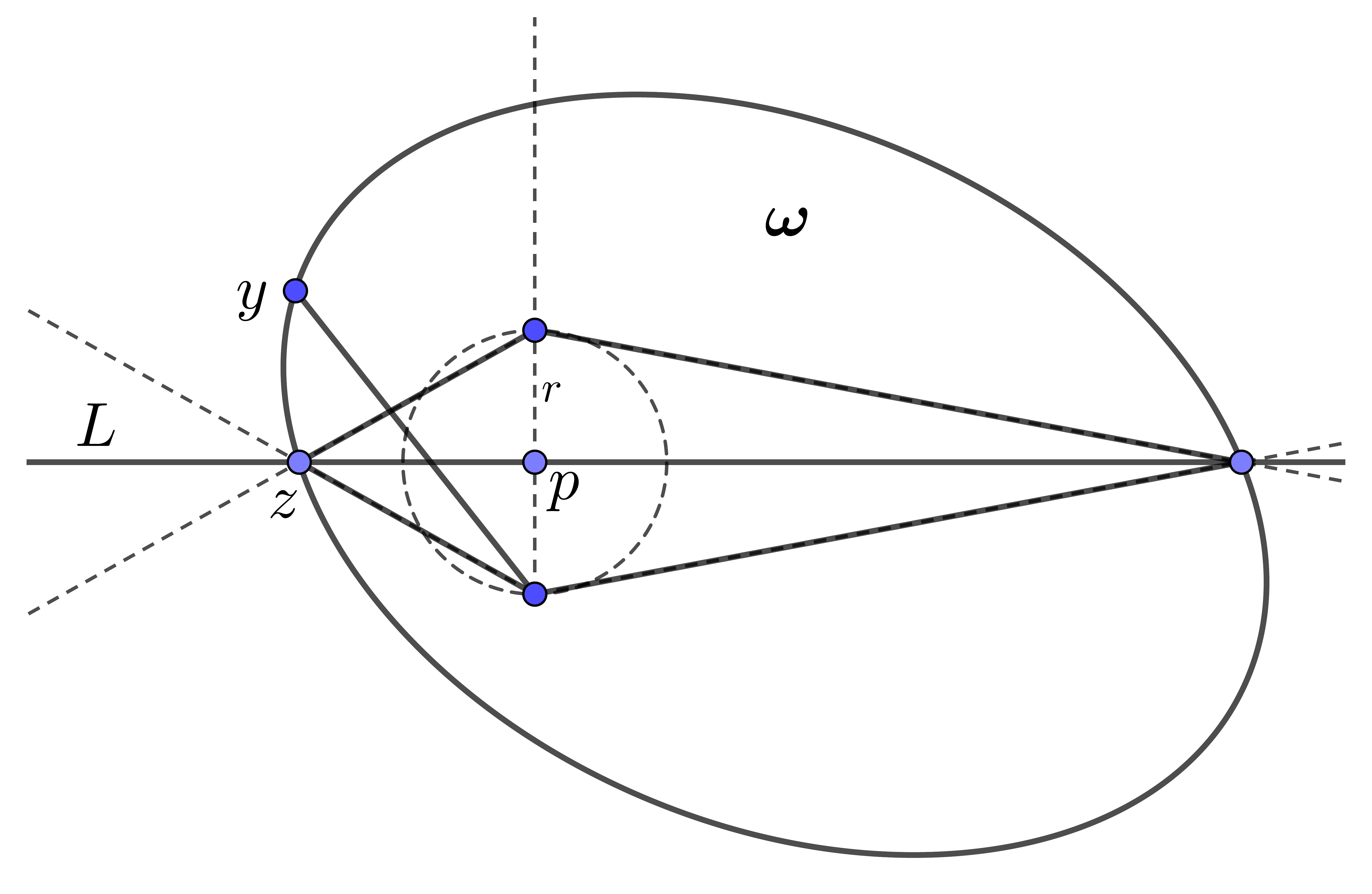}
    \caption{Hint to the proof of Fact~\ref{Slicing}.}
\end{center}		
    \label{Fact_proof}
\end{figure}

%\begin{proof}
%Assume that $L = \{y \in \mathbb{R}^d\mid y_d =0\}$, let $p = (p_1,\dots, p_{d-1},0)$. Then $p_\pm = (p_1,\dots, p_{d-1}, \pm r) \in \omega$. If $y \in \omega$ and $y_d>0$, then we can take $z=\frac{r}{y_d+r} y + \frac{y_d}{y_d+r} p_- \in \omega \cap L$. We may estimate 
%$$
%\frac{|y-z|}{y_d} = \frac{|z-p_-|}{r}\leq 1 +\frac{|z-p|}{r} \leq 1 +\frac{R}{r}. 
%$$
%The same bounds surely hold for $y \in \omega$ with $y_d<0$.  
%\end{proof}
%{\bf [insert the proof]}
%\begin{St}
%For any compact set~$C\subset \FreeBoundary \Omega$ there exists a constant~$c > 0$ such that if~$x,y\in C$, then
%\eq{
%\dist(\Vis_x,\Vis_y) \leq c|x-y|,
%}
%where the distance between two sets is measured in accordance to the Hausdorff distance.
%\end{St}
%{\bf [insert the proof]}

\begin{Def}
Let~$x\in \omega \subset \R^d$ and let~$G\colon\omega \to \R$. The set of linear functions $L$ such that 
$G(y)\leq G(x) + L(y-x)$ holds for any~$y\in \Vis_x^\omega$ is called the \emph{superdifferential} of~$G$ at~$x$. We will denote it by~$\eth G|_x$.
\end{Def}

\begin{Fact}\label{Fact_superdiff}
Let~$\omega \subset \R^d$ and let~$G\colon\omega \to \R$. Assume for every~$x\in\omega$ the superdifferential~$\eth G|_x$ of~$G$ at~$x$ is non-empty. %re exists a linear function~$L[G,x]$ such that for any~$y\in \Vis_x$ we have~$G(y)\leq G(x) + L[G,x](y-x)$. 
 Then\textup, the function~$G$ is locally concave on~$\omega$.
\end{Fact}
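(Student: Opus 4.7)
\medskip

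\textbf{Proof proposal.} The plan is to unwind the definitions. To show $G$ is locally concave on $\omega$, I need to verify that for every segment $[a,b] \subset \omega$ and every convex combination $x = \alpha a + \beta b$ with $\alpha, \beta \geq 0$, $\alpha+\beta = 1$, the inequality $G(x) \geq \alpha G(a) + \beta G(b)$ holds. So I would fix such a segment and such a point $x$, and observe the crucial point: because $[a,b] \subset \omega$, both $a$ and $b$ belong to $\Vis_x^\omega$ (they see $x$ along the segment $[a,b]$ itself).

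Now I would invoke the hypothesis and pick any $L \in \eth G|_x$. By the defining inequality of the superdifferential applied to $y = a$ and $y = b$, I obtain
\eq{\label{eq:twoIneq}
G(a) \leq G(x) + L(a - x) \quad \text{and} \quad G(b) \leq G(x) + L(b - x).
}
Multiplying the first inequality by $\alpha$, the second by $\beta$, adding them, and using $\alpha + \beta = 1$ together with the linearity of $L$, I get
\eq{
\alpha G(a) + \beta G(b) \leq G(x) + L\bigl(\alpha(a-x) + \beta(b-x)\bigr) = G(x) + L(\alpha a + \beta b - x) = G(x),
}
since $\alpha a + \beta b = x$.

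This is exactly the concavity inequality for $G|_{[a,b]}$ at the point $x$. Since $[a,b]$ was an arbitrary segment in $\omega$ and $x$ an arbitrary point of it, $G$ is locally concave. There is no real obstacle here: the entire content of the fact is that the pointwise existence of a supporting affine functional on the \emph{visibility set} is strong enough to produce concavity along every admissible segment, and this is immediate once one notices that the endpoints of any segment in $\omega$ through $x$ automatically lie in $\Vis_x^\omega$.
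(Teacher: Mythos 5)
Your proof is correct, and since the paper states this as a Fact without supplying a proof, there is nothing to compare it against; yours is the natural argument. The key observation — that the endpoints $a,b$ of any segment in $\omega$ containing $x$ automatically lie in $\Vis_x^\omega$, so the superdifferential inequality can be applied at both and convex-combined using the linearity of $L$ (so that $L(\alpha(a-x)+\beta(b-x))=L(0)=0$) — is exactly what makes the statement immediate, and the argument applies to every sub-segment, which is all that local concavity requires.
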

%{\bf [insert the proof]}

%It is natural to call the set of all linear functions~$L[G,x]$ that satisfy the two requirements above the \emph{superdifferential} of~$G$ at~$x$. We denote the superdifferential of~$G$ at the point~$x$ by~$\eth G|_x$. 
Every concave function has a non-empty superdifferetial at interior points of its domain (see, e.\,g., Section $23$ in~\cite{Rockafellar1970}). One may ask whether the superdifferential is non-empty for every~$x$ in~$\omega$ provided~$G$ is locally concave. The answer to this question is negative in general (consider the domain~$\omega$ formed by three lines passing through the origin in~$\R^2$). However, for some good domains (lenses among them) and sufficiently good functions, the answer is positive.
\begin{St}\label{ExistenceOfTangent}
Let~$\Omega$ be a lens that satisfies~\eqref{StrictConvexity} and~\eqref{ConeCondition}. Assume that the boundaries of~$\Omega$ are~$C^1$-smooth. Let~$f\colon \FixedBoundary\Omega\to \R$ be a locally Lipschitz function such that~$\BG_{\Omega,f}$ is finite. For any~$x\in\FreeBoundary\Omega$ there exists a linear function~$L[\BG,x]$ such that
\eq{
\BG_{\Omega,f}(x) + L[\BG,x](y-x) \geq \BG_{\Omega,f}(y)
}
for all~$y\in\Vis_x^\Om$. In other words\textup{,} $\eth \BG|_x$ is non-empty.
\end{St}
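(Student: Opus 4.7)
The $C^1$-smoothness of~$\FreeBoundary\Om$ at~$x$ provides a unique outward unit normal~$n$ to~$\OmOne$ and a tangent hyperplane $T=x+n^\perp$. Strict convexity of $\cl\OmOne$ forces $T\cap\cl\OmOne=\{x\}$; combined with $\cl\OmOne\subset\OmNull$, the compact convex $(d-1)$-dimensional slice $V:=T\cap\cl\OmNull$ has $x$ in its relative interior. Every segment in~$V$ avoids $\OmOne$ and lies in $\cl\OmNull$, so $V\subset\Vis_x^\Om$. Conversely, directions~$v$ with $\langle v,n\rangle<0$ point into $\OmOne$ from~$x$, so $\Vis_x^\Om\subset\{z:\langle z-x,n\rangle\geq 0\}$. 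Moreover, strict convexity rules out any other free-boundary point of $\Om$ from $\Vis_x^\Om$, and if $y\in\Vis_x^\Om\setminus\{x\}$ then $[x,y]\setminus\{x\}\subset\OmStar$.

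\textbf{Strategy: approach $x$ from inside the tangent slice.} I approximate $x$ by a sequence $x_k\in V\setminus\{x\}\subset\OmStar$ with $x_k\to x$. Each $x_k$ is interior to $\Om$, so $\BG$ admits a supporting linear functional $L_k$ at $x_k$ on $\Vis_{x_k}^\Om$ by the classical convex-analytic result that a finite concave function on a convex set has a nonempty superdifferential at interior points. I decompose $L_k = L_k^T\circ P + c_k\,\langle\cdot,n\rangle$, where $P$ is the orthogonal projection onto~$n^\perp$, and then aim to extract a convergent subsequence.

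\textbf{Passage to the limit.} The tangential parts $L_k^T$ are supporting functionals of the finite concave function $\BG|_V$ at the relative-interior points $x_k$ of the convex body~$V$. By the upper semi-continuity of the superdifferential map for concave functions on the relative interior of their domain (Section~24 of~\cite{Rockafellar1970}), the $L_k^T$ remain bounded and accumulate (along a subsequence) on a supporting functional of $\BG|_V$ at~$x$. For the normal coefficients one has $D_n\BG(x_k)\leq c_k\leq -D_{-n}\BG(x_k)$, where $D_v$ denotes the one-sided directional derivative. The key step is to show these directional derivatives stay uniformly bounded as $x_k\to x$. I would obtain this by a comparison argument: Lemma~\ref{DomainLemma} applied near each $x_k\pm\varepsilon n$ produces chords $[a,b]\subset\Om$ with endpoints on $\FixedBoundary\Om$, and locally along such chords one has $\BG\geq \alpha f(a)+\beta f(b)$ by local concavity plus the boundary condition. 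Together with the local Lipschitz property of $f$ and the $C^1$-smoothness of the boundaries, these lower bounds propagate into a uniform control of $D_{\pm n}\BG(x_k)$. Once a limit $L=\lim L_k$ has been extracted, and noting that any $y\in\Vis_x^\Om\setminus\{x\}$ automatically satisfies $y\in\Vis_{x_k}^\Om$ for large~$k$ (by the openness of $\cl\OmNull\setminus\cl\OmOne$ together with $[x,y]\setminus\{x\}\subset\OmStar$), the inequality $\BG(y)\leq\BG(x_k)+L_k(y-x_k)$ passes to the limit to give $\BG(y)\leq\BG(x)+L(y-x)$.

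\textbf{Main obstacle.} The crux is the uniform boundedness of the normal directional derivatives $D_{\pm n}\BG(x_k)$ as $x_k\to x$ along~$V$. Without such a bound the $c_k$ may diverge, and no linear $L$ will exist. This is precisely where all three hypotheses are needed together: the $C^1$-smoothness of $\FreeBoundary\Om$ gives a well-defined normal direction and makes the chord decompositions vary continuously, strict convexity keeps the slice $V$ and the visibility cone under control, and the local Lipschitz regularity of~$f$ converts the chord bounds into quantitative derivative estimates on $\BG$. The construction is parallel in spirit to, but substantially weaker than, the perturbation-and-extension argument that will be carried out in Section~\ref{S8} to prove Theorem~\ref{Extension}.
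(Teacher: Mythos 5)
Your approach is genuinely different from the paper's, and it has a gap that you yourself correctly identify but do not close.

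The paper does not approximate $x$ from the interior at all. It works directly at $x$: first it obtains a supporting functional $\ell$ for $\BG$ restricted to the tangent slice $V$ (your step too), and then it defines the missing normal coefficient $a$ as a supremum over $\Vis_x$ of the difference quotient $\big(\BG(y)-\BG(0)-\ell(\pi[y])\big)/(-y_d)$. The two crucial observations are (i) by \emph{minimality} of $\BG$ this supremum is unchanged if one restricts to $y\in\Vis_x\cap\FixedBoundary\Om$ --- because otherwise replacing $\BG$ on $\Vis_x$ by its minimum with the trial affine function would produce a strictly smaller member of $\LC{\Omega}{f}$; and (ii) once the supremum is over the fixed boundary only, Fact~\ref{Slicing} projects any boundary point $y^n$ with $y^n_d\to 0$ onto a nearby point $\tilde y^n$ of $\partial\OmNull\cap T$ with $|y^n-\tilde y^n|\lesssim|y^n_d|$, and then the local Lipschitz bound on $f$ shows that the numerator is $O(|y^n_d|)$. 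This makes $a$ finite in one stroke, with no limiting argument along a sequence $x_k\to x$.

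In your proposal the superdifferentials $\eth\BG|_{x_k}$ at interior points do exist, the tangential parts are controlled by the upper semi-continuity of the subdifferential map on the relative interior of $V$, and the passage-to-the-limit inequality $\BG(y)\leq\BG(x)+L(y-x)$ is actually fine (one can show $\Vis_x\setminus\{x\}\subset\Vis_{x_k}$ for all $x_k\in V$, using that $\OmOne\subset\{y_d>0\}$ and both $x_k$ and $y$ lie in $\{y_d\leq 0\}$; and $\BG(x_k)\to\BG(x)$ because concave functions are continuous on the relative interior of their convex domains). But the decisive step --- proving $c_k$ stays bounded, i.e.\ controlling $D_{-n}\BG(x_k)$ as $x_k\to x$ --- is left as a sketch ("I would obtain this by a comparison argument\dots"). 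This is not a proof; and as $x_k\to x$ the admissible step size in direction $-n$ before hitting $\OmOne$ shrinks to zero, so concavity alone gives no uniform control. The sketched chord argument via Lemma~\ref{DomainLemma} and $\BG\geq\alpha f(a)+\beta f(b)$ is plausible in spirit, but it is precisely the quantitative content that Fact~\ref{Slicing} and the minimality argument supply in the paper, and without carrying it out the proof is incomplete at its central step. I would recommend abandoning the approximating sequence $x_k$ and instead defining the normal slope $a$ directly at $x$ as the paper does: the minimality-of-$\BG$ reduction to the fixed boundary is the idea that unlocks the Lipschitz estimate, and your proposal does not use minimality of $\BG$ anywhere.
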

\begin{Rem}
One may replace the minimal locally concave function~$\BG$ with an arbitrary locally concave function~$G$ provided~$G$ is locally Lipschitz.
\end{Rem}
\begin{proof}
Without loss of generality, we may assume~$x=0$ and
\eq{
T_0\Omega_1 = \{y\in\R^d\mid y_d=0\}.
}
The symbol~$T_z\Omega_1$ denotes the tangent plane to~$\Omega_1$ at the point~$z$. We also assume~$y_d >0$ on~$\Omega_1$. By concavity of~$\BG|_{y_d=0}$, there exists a linear function~$\ell\colon \R^{d-1}\to \R$ such that
\eq{
\BG(z_1,z_2,\ldots,z_{d-1},0) \leq \BG(0) + \ell(z)
} 
for any~$z\in\R^{d-1}$ satisfying $(z,0) \in \Omega$. Let~$\pi$ denote the orthogonal projection onto~$\{y\in\R^d\mid y_d=0\}$. Let
\eq{\label{FirstSup}
a = \sup\Set{\frac{\BG(y) - \BG(0) - \ell(\pi[y])}{-y_d}}{y\in\Vis_0, y_d<0}.
}
We claim that 
\mlt{\label{SecondSup}
a =  \sup\Set{\frac{\BG(y) - \BG(0) - \ell(\pi[y])}{-y_d}}{y\in\Vis_0\cap \FixedBoundary\Om, y_d<0}\\
%\sup\limits_{y\in\Vis_0\cap \partial\OmNull} \frac{\BG(y) - \BG(0) - \ell(\pi[y])}{-y_d} =
=\sup\Set{\frac{f(y) - \BG(0) - \ell(\pi[y])}{-y_d}}{y\in\Vis_0\cap \FixedBoundary\Om, y_d<0}.
% \sup\limits_{y\in\Vis_0\cap \partial\OmNull} \frac{f(y) - \BG(0) - \ell(\pi[y])}{-y_d}.
}
Indeed,~\eqref{SecondSup} clearly does not exceed~\eqref{FirstSup}. On the other hand, if
\eq{
\BG(y) \leq -by_d + \BG(0)+ \ell(\pi[y]) 
}
for all~$y\in \Vis_0\cap \FixedBoundary\Om$ and some~$b\in \R$, then the same inequality holds true for all~$y\in\Vis_0$ by minimality of~$\BG$. Indeed, if this is not the case, the function 
\eq{
y \mapsto \begin{cases}
\BG(y), &y \in \Omega \setminus \Vis_0,\\
\min(\BG(y), -by_d + \BG(0)+ \ell(\pi[y])), & y \in \Omega \cap \Vis_0
\end{cases}
}
lies in $\LC{\Omega}{f}$ and is smaller than $\BG$, which is a contradiction. Thus, the two supremums on the right hand sides of~\eqref{FirstSup} and~\eqref{SecondSup} coincide.

Now let us prove that the local Lipschitz property of~$f$ implies~$a$ is a finite quantity. Let~$\{y^n\}_n$ be a sequence of points in~$\Vis_0\cap \FixedBoundary\Om$ that realizes the supremum in~\eqref{SecondSup}. Assume~$y^n \to y^*$. If~$y^*_d\ne 0$, then $a$ is finite. 

Consider the case~$y_d^*=0$. By Fact~\ref{Slicing}, there exist points~$\tilde{y}^n\in\partial\OmNull$ such that~$\tilde{y}^n_d=0$ and
\eq{
|y^n - \tilde{y}^n|\lesssim y_d^n.
}
Then,
\eq{
f(y^n) - \BG(0) - \ell(\pi[y^n]) = f(\tilde{y}^n) - \BG(0) - \ell(\pi[\tilde{y}^n]) + O(|y_d^n|) \leq O(|y_d^n|),
}
the constant in~$O$ depends on the Lipschitz constant of~$f$ and the constant~$C$ in Fact~\ref{Slicing}. The last relation proves~$a$ is finite. We may set
\eq{\label{LDef}
L[\BG,0](y) = \ell(\pi[y])- ay_d.
}
\end{proof}
\begin{Rem}
It follows from construction that the coefficients of the linear function~$L[\BG,x]$ are uniformly bounded when~$x$ runs through a compact subset of~$\FreeBoundary\Om$.  
\end{Rem}

%\begin{Rem}
%By the construction of~$L[\BG,x]$, there exists a point~$e_x\in\partial\OmNull\cap\Vis_x$ such that~$\BG(e_x) = L[\BG,x](e_x)$.
%\end{Rem}
\begin{St}\label{C2Bound}
Let~$\Omega$ be a lens that satisfies~\eqref{StrictConvexity} and~\eqref{ConeCondition}. Assume that the boundaries of~$\Omega$ are~$C^2$-smooth. Let~$f\colon \FixedBoundary\Omega\to \R$ be a~$C^2$-smooth function such that~$\BG_{\Omega,f}$ is finite. Let~$L[\BG,x]$ be the linear functions constructed by formulas~\eqref{SecondSup} and~\eqref{LDef}\textup, here~$x\in \FreeBoundary \Om$. Then\textup, there exists a point~$e_x\in \FixedBoundary\Om \cap\Vis_x$ such that 
\eq{\label{C2Estimate}
|\BG(x) + L[\BG,x](z-x) - f(z)| = O(|e_x-z|^2),\qquad z\in \FixedBoundary\Om \cap \Vis_x.
}
The constant in this inequality is uniform when~$x$ runs through a compact set on~$\FreeBoundary\Om$.
\end{St}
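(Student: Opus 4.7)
The plan is to work in the coordinate system used in the proof of Proposition~\ref{ExistenceOfTangent}: after an affine change of variables assume $x = 0$, $T_0\OmOne = \{y_d = 0\}$, $\OmOne \subset \{y_d > 0\}$, and that $L[\BG, 0]$ is given by~\eqref{LDef}. Set $h(z) := \BG(0) + L[\BG, 0](z) - f(z)$. Proposition~\ref{ExistenceOfTangent}, combined with the fixed-boundary inequality $\BG \geq f$, implies $h \geq 0$ on $\FixedBoundary\Om \cap \Vis_0$, and Proposition~\ref{BoundedVisibility} says this set is compact. The goal is to exhibit a point $e_0$ in this compact set satisfying $h(z) = O(|z - e_0|^2)$; since $h \geq 0$ this delivers the required $|\BG(x) + L[\BG,x](z-x) - f(z)| = O(|e_x - z|^2)$.

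I would locate $e_0$ as follows. The supremum defining $a$ in~\eqref{SecondSup} yields a sequence $y^n \in \FixedBoundary\Om \cap \Vis_0$ with $y^n_d < 0$ for which $\phi(y^n) := (f(y^n) - \BG(0) - \ell(\pi[y^n]))/(-y^n_d) \to a$; equivalently $h(y^n) = (-y^n_d)(a - \phi(y^n)) \to 0$. Passing to a subsequence $y^n \to e_0 \in \FixedBoundary\Om \cap \Vis_0$, continuity of $h$ gives $h(e_0) = 0$. It then remains to prove tangential criticality $\nabla_T h(e_0) = 0$, where $\nabla_T$ is the tangential gradient along $\FixedBoundary\Om$; Taylor's theorem for $h \in C^2(\FixedBoundary\Om)$ will then yield $h(z) = O(|z - e_0|^2)$ in a neighbourhood of $e_0$, and away from $e_0$ the same bound persists with a larger constant because $h$ is bounded on the compact set while $|z - e_0|$ is bounded below. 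Uniformity in $x$ over a compact subset of $\FreeBoundary\Om$ will follow from uniform bounds on $\ell$ and $a$ (see the remark after Proposition~\ref{ExistenceOfTangent}), on the $C^2$-data of $f$ and $\FixedBoundary\Om$, and on $\diam\Vis_x$ (Proposition~\ref{BoundedVisibility}).

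The tangential criticality splits into two cases. If $(e_0)_d < 0$, then $e_0$ lies in the relative interior of the domain of $\phi$ and is a maximizer of $\phi$ on a small neighbourhood of itself in $\FixedBoundary\Om$, so $\nabla_T \phi(e_0) = 0$, which a direct computation converts into $\nabla_T h(e_0) = 0$. If $(e_0)_d = 0$, then $\phi$ is of $0/0$ form at $e_0$; using $f \in C^2$ together with the identity $f(e_0) = \BG(0) + \ell(\pi[e_0])$ (a consequence of $h(e_0) = 0$ and $(e_0)_d = 0$), a Taylor expansion around $e_0$ gives
\[
f(y^n) - \BG(0) - \ell(\pi[y^n]) = \bigl(\nabla f(e_0) - \nabla(\ell\circ\pi)\bigr) \cdot (y^n - e_0) + O(|y^n - e_0|^2),
\]
so that along any tangent direction $v \in T_{e_0}\FixedBoundary\Om$ with $v_d < 0$ one has $\phi(y^n) \to -(\nabla f(e_0) - \nabla(\ell\circ\pi))\cdot v/v_d$. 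For the supremum of this ratio over all admissible tangent directions to be bounded by the finite number $a$, the identity $(\nabla f(e_0) - \nabla(\ell\circ\pi))\cdot v + a v_d = 0$ must hold for every $v \in T_{e_0}\FixedBoundary\Om$: otherwise one could find a tangent direction on which $v_d$ is arbitrarily small negative while the numerator stays bounded away from zero, blowing up the ratio and contradicting $a < \infty$. This identity is precisely $\nabla_T h(e_0) = 0$. This second case will be the main obstacle, since the standard interior-extremum reasoning fails at the ``equator'' $\partial\OmNull \cap \{y_d = 0\}$, and tangential criticality must instead be extracted from the indeterminate-form limit defining $a$.
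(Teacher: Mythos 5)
Your strategy matches the paper's at the structural level: take $e_0$ to be a limit point of the sequence $y^n$ realizing the supremum $a$ in~\eqref{SecondSup}, observe that $h(z) := \BG(0) + L[\BG,0](z) - f(z)$ is nonnegative on $\FixedBoundary\Om\cap\Vis_0$ with $h(e_0)=0$, argue that the tangential gradient $\nabla_T h(e_0)$ vanishes, and finish by Taylor expansion; the case $(e_0)_d<0$ is treated identically. The gap is in the case $(e_0)_d=0$, in the step where you claim that finiteness of $a$ forces $(\nabla f(e_0)-\nabla(\ell\circ\pi))\cdot v + av_d = 0$ for all $v\in T_{e_0}\FixedBoundary\Om$. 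Split $T_{e_0}\FixedBoundary\Om$ into the tangent space of the equator $\{y_d=0\}\cap\FixedBoundary\Om$ and a single transverse direction $u$ (normalize $u=\nabla_T y_d$). Your blow-up argument correctly forces $w:=\nabla_T(f-\ell\circ\pi)(e_0)$ to have vanishing equatorial component: otherwise, taking $v_d\to0^-$ with $w\cdot v$ bounded away from zero sends the ratio $-w\cdot v/v_d$ to $+\infty$. But once $w=\mu u$, that ratio is the same constant $-\mu$ for every admissible $v$, so $a<\infty$ yields only the one-sided bound $-\mu\le a$, which is exactly $\nabla_T h(e_0)\cdot u\le 0$ --- a fact you already had from $h\ge0$ on $\{y_d\le0\}$. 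To get $\nabla_T h(e_0)=0$ you must show $-\mu=a$, i.e.\ that the supremum $a$ is genuinely attained in the transverse direction; and if the maximizing sequence approaches tangentially, $y^n_d/|y^n-e_0|\to0$, the directional-limit computation never yields $\phi(y^n)\to-\mu$, so the argument stalls exactly where you anticipated it might.

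The paper sidesteps this by taking the hyperplane through $T_{e,\OmNull}\cap T_{0,\OmOne}$ parallel to the $y_d$-axis as a coordinate plane, projecting $A=-h$ onto it, and proving $\partial_{y_d}\tilde{A}(e)\le 0$ by integrating that partial derivative along the vertical segment joining $\tilde y^n$ to its trace on $\{y_d=0\}$. This fundamental-theorem-of-calculus estimate uses only that $|y^n_d|\to0$, not any comparability of $|y^n_d|$ with $|y^n-e|$, so it is insensitive to tangential approach. Replacing the ``otherwise blowup'' step with an argument of this type --- one that genuinely exploits the attainment of $a$ along $y^n$ rather than just the finiteness of $a$ --- is what is needed to close the case $(e_0)_d=0$.
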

\begin{proof}
Similar to the proof of Proposition~\ref{ExistenceOfTangent}, we assume~$x=0$,
\eq{
T_0\Omega_1 = \{y\in\R^d\mid y_d=0\},
}
and~$y_d >0$ on~$\Omega_1$.

\newcommand{\AAA}{A}

Let~$e$ be a limit point of the sequence~$\{y^n\}_n$ that realizes the supremum in~\eqref{SecondSup}. We will show that the choice~$e_x:= e$ with~$x=0$ fulfills~\eqref{C2Estimate}. Let us first prove~$\BG(0) + L[\BG,0](e) = f(e)$. If~$e_d\ne 0$, then this follows from the definition of~$L[\BG,0]$. In the case~$e_d=0$, we have~$\BG(e) = \BG(0) + \ell(e_1,e_2,\ldots,e_{d-1})$; if this identity does not hold, the supremum~$a$ equals~$-\infty$, which is definitely false. Therefore,~$\BG(0)+L[\BG,0](e) = f(e)$.

Let us consider the $C^2$-smoth function 
\eq{
\AAA(z) = f(z) - \BG(0) - L[\BG,0](z), \qquad z \in \FixedBoundary\Om.
}

We have $\AAA(e)=0$. If $e_d <0$, then $\AAA$ is non-positive in a neighborhood of $e$. Therefore, $e$ is a local maximum of~$\AAA$, and~\eqref{C2Estimate} follows.

If $e_d=0$, then we know that $\AAA(z) \leq 0$ for $z \in \FixedBoundary\Om$ such that~$z_d \leq 0$. From~\eqref{SecondSup} we know that for any $\delta>0$ we have $\AAA(y^n) > \delta y^n_d$ for sufficiently large $n$. Let us consider the hyperplane that contains the intersection $T_{e,\OmNull}\cap T_{0,\OmOne}$ and is parallel to the $y_d$-axis. Without loss of generality, we may assume this hyperplane is $\{y_{d-1}=0\}$. Let $\tilde\AAA$ be the projection of $\AAA$ onto this plane defined near $e$: if~$\pi\colon \FixedBoundary\Om \to \{y_{d-1} = 0\}$ is the orthogonal projection, then
\eq{
\tilde\AAA(\tilde z) = \AAA(\pi^{-1}[\tilde z]),\qquad \tilde z_{d-1} = 0, \quad \tilde z \text{ is close to } e.
} 
The function $\tilde\AAA$ is $C^2$-smooth in a neighborhood of $e$, $\tilde \AAA(e) = 0$, and $\tilde \AAA(z) \leq 0$ when $\tilde z_d\leq 0$. What is more, there is a sequence $\tilde y^n = (\tilde y^n_1, \dots, \tilde y^n_{d-2}, 0 , \tilde y^n_d)$ such that~$\tilde y^n \to e$, $\tilde y^n_d<0$, and 
\eq{\label{eq120401}
0\geq \tilde\AAA(\tilde y^n) > \delta\tilde y^n_d
}  
for any fixed $\delta>0$ provided $n$ is sufficiently large.

Let us prove that $\nabla \tilde \AAA (e) =0$. The restriction of~$\tilde\AAA$ to the section $y_d=0$ attains its maximum at $e$, therefore, we only need to check that 
\eq{
\frac{\partial \tilde \AAA}{\partial y_d}(e) =0.
} 
Note that the derivative on the left hand side cannot be negative since~$\tilde \AAA(z) \leq 0$ when $\tilde z_d\leq 0$. Let us prove it is non-positive. If this is not the case, then $\frac{\partial}{\partial y_d}\tilde\AAA(e) > 2\delta$ for some $\delta>0$, and, by $C^1$-continuity, it follows that $\frac{\partial}{\partial y_d}\tilde\AAA > \delta$ in a neighborhood of $e$. Then, 
\eq{
\tilde\AAA(\tilde y^n) \leq \tilde\AAA(\tilde y_1^n, \dots, \tilde y_{d-2}^n, 0, 0) + \delta \tilde y^n_d \leq \delta \tilde y^n_d,  
}  
which contradicts~\eqref{eq120401}. Therefore, $\nabla \tilde \AAA (e) =0$. Then, for any $z \in \FixedBoundary\Om$ sufficiently close to $e$ we have 
\eq{
\AAA(z) = \tilde \AAA (\tilde z) = O(|\tilde z - e|^2) = O(|z-e|^2),\qquad \tilde z = \pi [z],
}
which proves~\eqref{C2Estimate}.
\end{proof}
\begin{Rem}
The condition~$f\in C^2$ is superfluous. One may replace it with~$C^{1,1}$ as it can be seen from the proof. 
\end{Rem}
\begin{St}\label{CubicProp}
Let~$\Omega$ be a lens that satisfies~\eqref{StrictConvexity} and~\eqref{ConeCondition}. Assume that the boundaries of~$\Omega$ are~$C^2$-smooth. Let~$f\colon \FixedBoundary\Omega\to \R$ be a~$C^2$-smooth function such that~$\BG_{\Omega,f}$ is finite. Let~$L[\BG,x]$ be the linear functions constructed by formulas~\eqref{SecondSup} and~\eqref{LDef}\textup, here~$x\in \FreeBoundary\Omega$. Then\textup,  
\eq{\label{C2Estimate2}
\BG(y) \leq \BG(x) + L[\BG,x](y-x) + O(|x-y|^3),\qquad x,y\in\FreeBoundary\Omega.
}
The implicit constants hidden by the~$O$-notation are uniform when~$x$ and~$y$ run through a compact set on~$\FreeBoundary\Om$.
\end{St}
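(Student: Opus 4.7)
The plan is to apply the quadratic estimate of Proposition~\ref{C2Bound} after decomposing the comparison from $x$ to $y$ through an auxiliary interior point on the tangent plane. I would use the same local coordinates as in the proofs of Propositions~\ref{ExistenceOfTangent} and~\ref{C2Bound}: $x = 0$, $T_0\Omega_1 = \{y_d = 0\}$ with $y_d > 0$ inside $\Omega_1$ near $x$, so that $\FreeBoundary\Omega$ is locally the graph $y_d = g(y')$ of a convex $C^2$ function $g$ with $g(0) = 0$ and $\nabla g(0) = 0$. Write $L = L[\BG, x]$.

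For $y = (y', g(y'))$ close to $x$ on $\FreeBoundary\Omega$, introduce $z = (y', 0)$, the orthogonal projection of $y$ onto the tangent plane at $x$. Three elementary observations follow. First, $[x, z]$ lies in the tangent plane, which meets $\cl\Omega_1$ only at $x$ by strict convexity, so $[x, z] \subset \Omega$ and $z \in \Vis_x$. Second, $[z, y]$ is a vertical segment lying under the graph of $g$, hence in $\Omega$, so $y \in \Vis_z$. Third, $|y - z| = g(y') = O(|y - x|^2)$ by the $C^2$-smoothness of $g$ and the vanishing of $\nabla g(0)$.

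Applying the supergradient inequality of Proposition~\ref{ExistenceOfTangent} at $x$ gives $\BG(z) \leq \BG(x) + L(z - x)$; at the interior point $z$ (which lies off both boundaries whenever $y' \neq 0$), standard convex analysis provides a linear function $L_z$ with $\BG(w) \leq \BG(z) + L_z(w - z)$ for $w \in \Vis_z$, so in particular $\BG(y) \leq \BG(z) + L_z(y - z)$. Chaining the two inequalities yields
\[
\BG(y) \leq \BG(x) + L(y - x) + (L_z - L)(y - z).
\]
Since $y - z$ is parallel to the $y_d$-axis with norm $g(y') = O(|y - x|^2)$, the target $O(|y - x|^3)$ bound reduces to showing that the $y_d$-component of $L_z - L$ is $O(|z - x|) = O(|y - x|)$ for a suitable choice of $L_z$.

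The main obstacle is this Lipschitz-type regularity of the vertical component of the superdifferential. To address it, I would apply Proposition~\ref{C2Bound} at both $x$ and $z$: the affine extensions $\BG(x) + L(\cdot - x)$ and $\BG(z) + L_z(\cdot - z)$ are pinned down by matching $f$ quadratically near points $e_x, e_z \in \FixedBoundary\Omega$. Using the $C^2$-smoothness of $f$ and of both boundaries together with an implicit-function argument, one obtains $|e_z - e_x| = O(|z - x|)$, and then a direct inspection of the formulas~\eqref{FirstSup} and~\eqref{LDef} defining the vertical slope yields the required Lipschitz bound. This final regularity step parallels the corresponding two-dimensional argument in~\cite{StolyarovZatitskiy2016} and essentially transfers the $C^2$-smoothness of the data into a first-order quantitative regularity of the superdifferential of $\BG$ along the free boundary.
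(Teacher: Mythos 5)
Your decomposition through the tangent-plane projection $z=(y',0)$ is geometrically sound, and the chain
\[
\BG(y)\le\BG(x)+L(y-x)+(L_z-L)(y-z),\qquad |y-z|=O(|y-x|^2),
\]
is correct. The gap is in the final step: you need the vertical component of $L_z-L$ to be $O(|z-x|)$, i.e.\ a Lipschitz bound on the superdifferential of $\BG$ in the vertical direction as the base point slides along the tangent plane. This is not established anywhere in the paper, and it is not a consequence of Proposition~\ref{C2Bound}. In fact $z$ is an interior point of $\Omega$ (not a free-boundary point), so Proposition~\ref{C2Bound} simply does not apply at $z$; there is no ``contact point $e_z$'' attached to $z$. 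Even if you replaced $z$ by $y$, the ``implicit-function'' claim that $e_x$ depends Lipschitz-continuously on $x$ is precisely the kind of quantitative regularity of $\BG$ that the cubic estimate is supposed to deliver — the reduction is essentially circular. The superdifferential of a minimal locally concave function is in general only upper semicontinuous, and $\BG$ is not known to be $C^{1,1}$, so a first-order modulus of continuity for $\eth\BG$ is genuinely beyond the available hypotheses.

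The paper avoids this entirely. It works with the single affine function $L[\BG,x]$ and the concave auxiliary function $G(w)=\BG(w)-\BG(0)-L[\BG,0](w)$, which vanishes at the contact point $e_x$ of Proposition~\ref{C2Bound} and is $\le 0$ on $\Vis_x\supset\{z_d=0\}$. It then controls where $e_x$ lies relative to a cone $C_y$ about $y$: either $e_x\in C_y$, in which case the segment $[e_x,y]$ crosses the plane $\{z_d=0\}$ and concavity of $G$ on that segment gives $G(y)\le 0$ outright, or $e_x\notin C_y$, in which case one replaces $e_x$ by a nearby $e'\in\FixedBoundary\Om\cap C_y$ with $|e'-e_x|\lesssim|y_{\bar d}|$ and runs a secant estimate on $[e',y]$. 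The cubic gain then comes from multiplying the length ratio $|P-y|/|P-e'|=O(|y_{\bar d}|)$ by the quadratic bound $|G(e')|=O(|e'-e_x|^2)=O(|y_{\bar d}|^2)$ from Proposition~\ref{C2Bound}. You would need to replace your ``Lipschitz superdifferential'' step by an argument of this secant type; as written, the proposal does not close.
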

\begin{proof}
By compactness argument, it suffices to consider the case where~$y$ lies in a neighborhood of~$x$. Similar to the proof of Proposition~\ref{ExistenceOfTangent}, we assume~$x=0$,
\eq{
T_0\Omega_1 = \{y\in\R^d\mid y_d=0\},
}
and~$y_d >0$ on~$\Omega_1$. Let~$\FreeBoundary\Om$ be defined as the graph of the function~$h\colon U\to \R$, where~$U\subset \R^{d-1}$ is a neighborhood of the origin. Consider the cone
\eq{
C_y = \Set{z\in \R^d}{z_d - y_d \leq -C|y_{\bar d}||z_{\bar d} - y_{\bar d}|}, \quad y=(y_{\bar d},y_d), z=(z_{\bar{d}},z_d),\quad y_{\bar d}, z_{\bar d} \in \R^{d-1},
}
where~$y \in \FreeBoundary\Om$ is a point in a neighborhood of~$x$ and~$C$ is a sufficiently large constant. See Fig.~\ref{TheConeC1} for visualization.
\begin{figure}
    \includegraphics[width=0.45\textwidth]{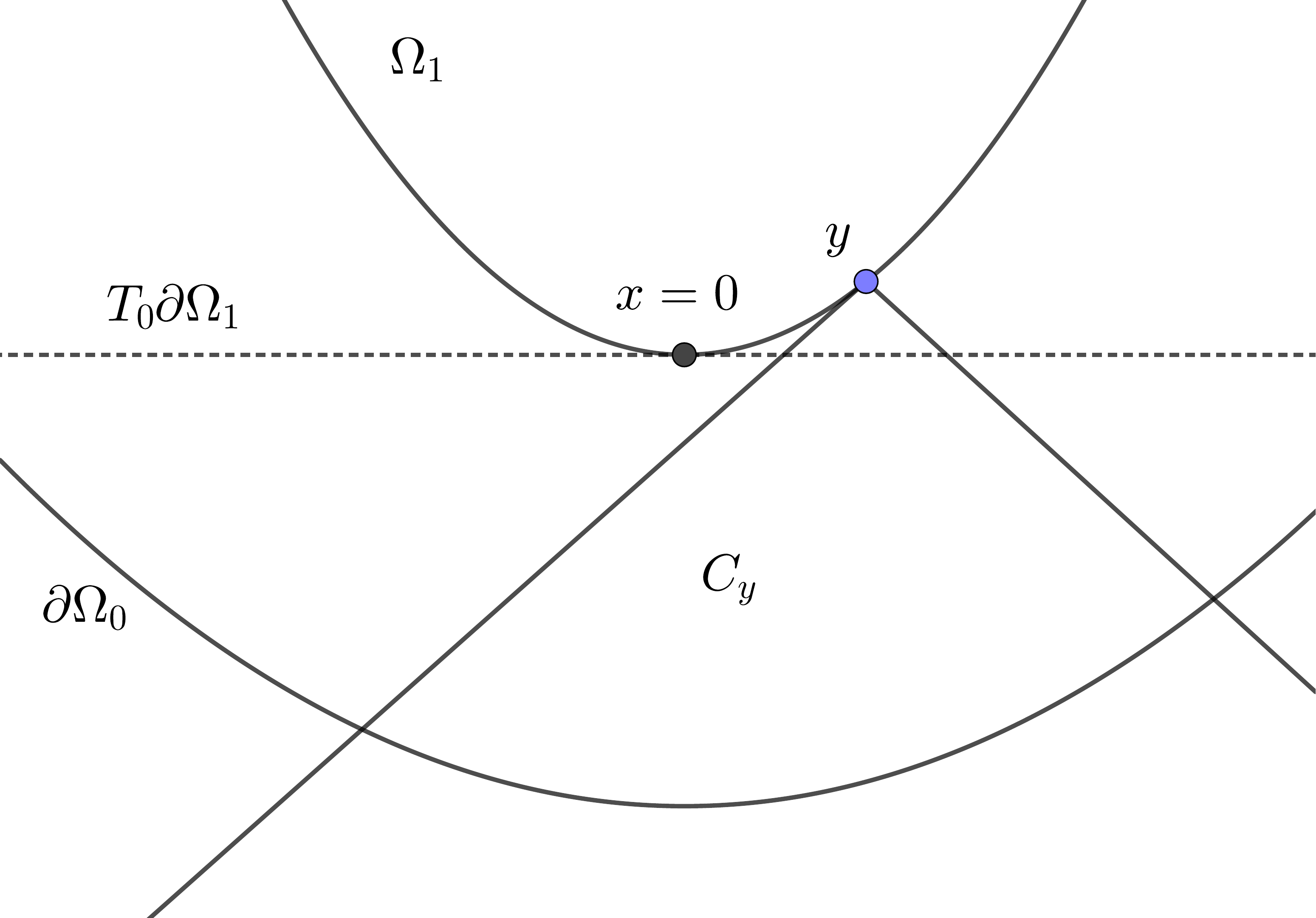}
    \hspace{1cm}
    \includegraphics[width=0.45\textwidth]{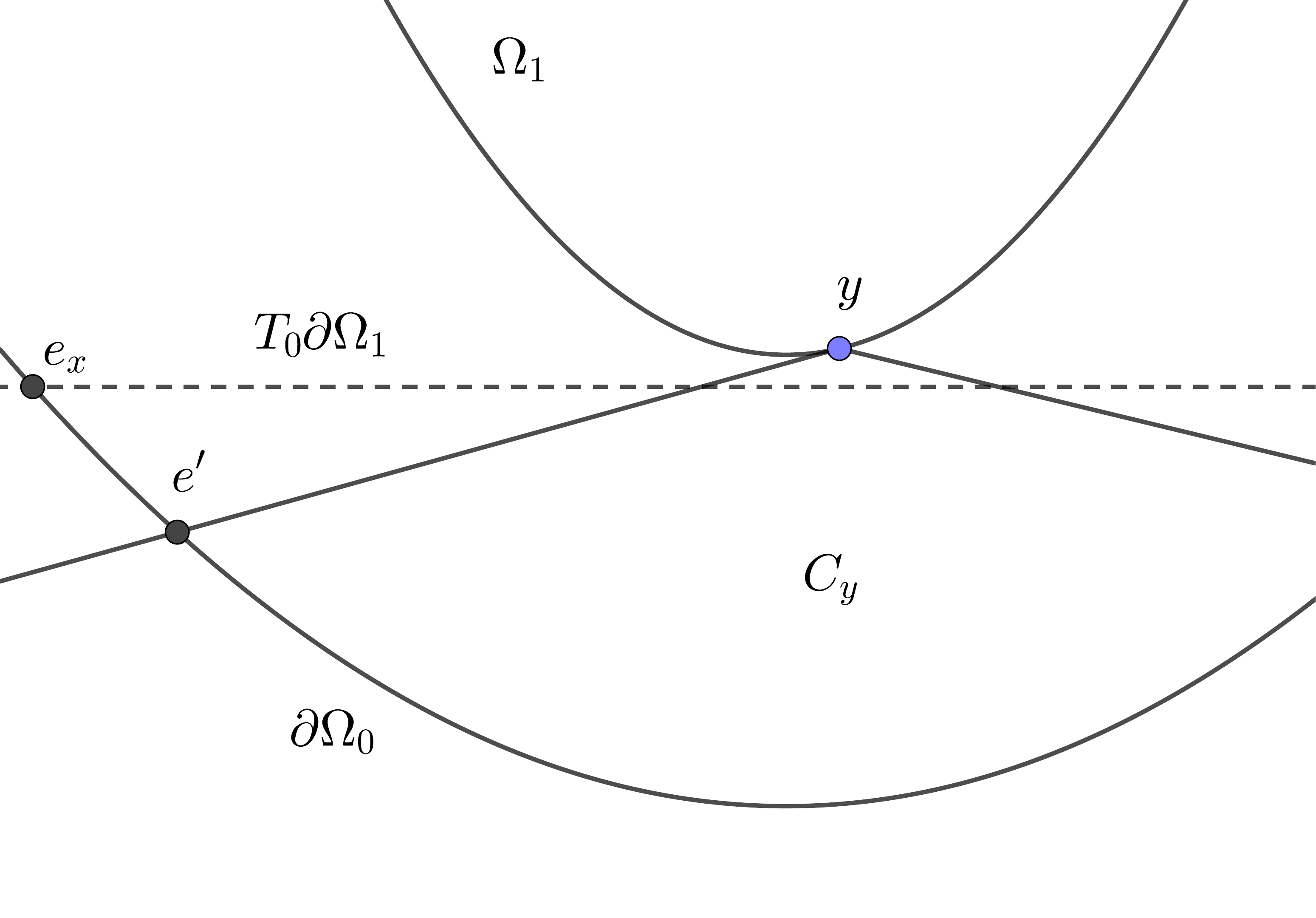}
    \caption{Illustration to the proof of Proposition~\ref{CubicProp}. The first picture shows the cone~$C_y$ in the~$d=2$ case. The second picture shows the two-dimensional section that arises in the construction of~$e'$.}
    \label{TheConeC1}
\end{figure}
\paragraph{The cone~$C_y$ does not intersect~$\OmOne$.} Let us prove this claim. Since the tangent plane to~$\OmOne$ at the point~$(y_{\bar d},h(y_{\bar d}))$ is described by the equation~$z_d - y_d = \scalprod{\nabla h (y_{\bar d})}{z_{\bar d} - y_{\bar d}}$, it suffices to prove the inequality
\eq{
z_d - y_d \leq \scalprod{\nabla h (y_{\bar d})}{z_{\bar d} - y_{\bar d}},\qquad z\in C_y.
}
We estimate
\eq{
z_d - y_d \Lref{\scriptscriptstyle z\in C_y} - C|y_{\bar d}||z_{\bar d} - y_{\bar d}| \leq -|\nabla h(y_{\bar d})||z_{\bar d} - y_{\bar d}| \leq \scalprod{\nabla h (y_{\bar d})}{z_{\bar d} - y_{\bar d}}. 
}
The inequality~$|\nabla h(y_{\bar d})| \leq C|y_{\bar d}|$ for all~$x$ in a compact set and all~$y$ in a neighborhood of~$x$ follows from the~$C^2$-smoothness assumption about~$\FreeBoundary\Om$. Thus,~$C_y$ indeed does not intersect~$\OmOne$.

A similar reasoning shows that
\eq{\label{Z_dGeq0}
|z-y| \lesssim |y_{\bar d}|, \quad \text{provided}\quad z\in C_y \ \text{and}\ z_d \geq 0.
}
In particular, in such a case~$z$ cannot belong to~$\FixedBoundary\Om$.

Let~$e_x$ be the point constructed in Proposition~\ref{C2Bound}. We consider two cases:~$e_x\in C_y$ and~$e_x \notin C_y$. 

\paragraph{Case~$e_x \in C_y$.} In this case,~$e_x \in \Vis_y$ and the segment~$[e_x,y]$ intersects~$\{z\mid z_d=0\}$ (since~$e_x$ lies below the~$z_d=0$ plane by~\eqref{Z_dGeq0}). Denote the point of intersection by~$P$. Then, the function~$G$ defined by
\eq{\label{FunctionG}
G(z) = \BG(z) - \BG(0) - L[\BG,0](z),\qquad z\in \Om,
}
is concave on this segment, attains the value~$0$ at~$e_x$ and is non-positive at~$P$. Thus, it is non-positive at~$y$ as well, and we have proved~$\BG(y) \leq \BG(x)+ L[\BG,x](y-x)$, which is stronger than~\eqref{C2Estimate2}.

\paragraph{Consider the case~$e_x\notin C_y$.} There exists a point~$e'\in \FixedBoundary\Om \cap C_y$ such that~$|e_x - e'| \lesssim |y_{\bar d}|$. For example, such a point can be found in the two-dimensional plane that passes through $y$,~$e_x$, and is orthogonal to the plane~$z_d=0$ (here we use Proposition~\ref{BoundedVisibility}; see also the second drawing on Fig.~\ref{TheConeC1}). The segment~$[e',y]$ intersects the plane~$\{z\mid z_d=0\}$ at the point~$P$. By~\eqref{Z_dGeq0},
\eq{\label{Eq829}
|y - P| \lesssim |y_{\bar d}|\quad \text{and}\quad 
|y - e'| \gtrsim 1.
}

By the concavity of~$G$ (see~\eqref{FunctionG} for the definition of~$G$),
\eq{
G(P) \geq \frac{|P-e'|}{|e'-y|}G(y) + \frac{|P-y|}{|e'-y|}G(e').
}
Since~$G(P) \leq0$,
\eq{
G(y) \leq-\frac{|P-y|}{|P-e'|}G(e')\Lseqref{C2Estimate} \frac{|P-y|}{|P-e'|} |e'-e_x|^2\Lseqref{Eq829}|y_{\bar d}|\cdot|y_{\bar d}|^2 = |y_{\bar d}|^3.
}
\end{proof}
\begin{Cor}\label{QuadratischeCor}
Let~$\Omega$ be a lens that satisfies~\eqref{StrictConvexity} and~\eqref{ConeCondition}. Assume that the boundaries of~$\Omega$ are~$C^2$-smooth. Let~$f\colon \FixedBoundary\Omega\to \R$ be a~$C^2$-smooth function such that~$\BG_{\Omega,f}$ is finite. For any~$\eps >0$ there exists a function~$G\colon\Om\to \R$ that is locally concave on~$\Om$ and satisfies the inequalities
\eq{\
\BG_{\Om,f}(x) \leq G(x) \leq \BG_{\Om,f}(x) + \eps,\qquad x\in \Om;
}
moreover\textup, for any~$x\in \FreeBoundary\Om$ there exists a linear function~$L[G,x]$ such that for any compact set~$K\subset \FreeBoundary\Om$ the inequality
\eq{\label{Quadratische}
G(y)\leq G(x) + L[G,x](y-x) - c_K|x-y|^2,\qquad x,y\in K,\quad |x-y| < \eps_K,
}
holds true with the positive constants~$c_K$ and~$\eps_K$ depending on~$K$ only. The coefficients of the linear functions~$L[G,x]$  are uniformly bounded when~$x$ runs through a compact subset of~$\FreeBoundary\Om$.
\end{Cor}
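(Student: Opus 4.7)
The plan is to perturb $\BG_{\Omega, f}$ by a small, smooth, nonnegative, strongly concave function. Specifically, I will construct an auxiliary $h\colon \Omega \to \R$ that is $C^2$, locally concave on $\Omega$, nonnegative, bounded above by some $M < \infty$, and whose Hessian is bounded above by $-2c_0 I$ on compact subsets of $\Omega$ for some $c_0 > 0$. In the bounded case, the explicit choice $h(x) = R^2 - |x - x_0|^2$ with $R$ large enough that $h > 0$ on $\Omega$ does the job: it is smooth, nonnegative, bounded, with Hessian exactly $-2I$ everywhere. In the unbounded case, a similar construction, localized on a neighborhood of each compact subset of $\FreeBoundary\Omega$, suffices because~\eqref{Quadratische} is only needed near $\FreeBoundary\Omega$. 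Setting $G := \BG_{\Omega, f} + \delta h$ with $\delta := \eps / M$, I immediately obtain $\BG_{\Omega, f} \leq G \leq \BG_{\Omega, f} + \eps$, and $G$ is locally concave as a sum of two locally concave functions.

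For the linear function, I take $L[G, x] := L[\BG, x] + \delta\, \nabla h(x)$, where $L[\BG, x]$ is the functional supplied by Proposition~\ref{ExistenceOfTangent}. The coefficients of $L[G, x]$ are uniformly bounded on compact subsets of $\FreeBoundary\Omega$ by the Remark following Proposition~\ref{ExistenceOfTangent} (for the first summand) together with the smoothness of $h$ (for the second).

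The estimate~\eqref{Quadratische} then follows from combining two bounds. Fix a compact $K \subset \FreeBoundary\Omega$. Proposition~\ref{CubicProp} yields a constant $C_K > 0$ with $\BG(y) \leq \BG(x) + L[\BG, x](y - x) + C_K|x - y|^3$ for all $x, y \in K$. The $C^2$-smoothness of $h$ and Taylor's theorem give $h(y) \leq h(x) + \nabla h(x)\cdot(y - x) - c_0|x - y|^2$ for $x, y$ in a neighborhood of $K$. Summing $\delta$ times the second into the first,
\eq{
G(y) \leq G(x) + L[G, x](y - x) + C_K|x - y|^3 - \delta c_0|x - y|^2.
}
Choosing $\eps_K := \delta c_0 / (2 C_K)$ guarantees that the cubic error is absorbed by half of the quadratic term whenever $|x - y| < \eps_K$, producing~\eqref{Quadratische} with $c_K := \delta c_0 / 2$. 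The main technical obstacle is the construction of $h$ when $\Omega$ is unbounded, since no globally bounded function on an unbounded convex set can be strongly concave in the full Hessian sense; the resolution is to demand strong concavity only on neighborhoods of compact subsets of $\FreeBoundary\Omega$, which is still enough for~\eqref{Quadratische} to hold uniformly on each fixed compactum.
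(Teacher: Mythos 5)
Your proposal is correct and essentially matches the paper's own proof: both perturb $\BG_{\Omega,f}$ by a small, bounded, strongly concave auxiliary function and then combine the resulting quadratic drop with the cubic bound of Proposition~\ref{CubicProp}, choosing $\eps_K$ small enough that the cubic error is absorbed. (A small remark: the paper asks for $g$ with positive definite Hessian and calls it ``strongly convex,'' which must be a typo --- with a convex perturbation $G = \BG + \eps g$ would not be locally concave and \eqref{Quadratische} would fail; your choice of a negative definite Hessian is the intended reading, and your explicit paraboloid $R^2-|x-x_0|^2$ in the bounded case has the added virtue of being defined on all of $\R^d$, so the Taylor estimate along segments $[x,y]$ that leave $\Omega$ through $\Omega_1$ is unproblematic.)
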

\begin{proof}
Let~$g\colon \Omega \to [0,1]$ be a strongly convex function (i.\,e., it is~$C^2$-smooth and with everywhere strictly positive definite Hesse matrix); such a function is easy to construct using~\eqref{StrictConvexity} and~\eqref{ConeCondition} (see Lemma~$4.5$ in~\cite{StolyarovZatitskiy2016}). We set
\eq{
G(x) = \BG_{\Omega,f}(x) + \eps g(x),
}
which leads to the natural choice of the linear functions~$L[G,x]$:
\eq{
L[G,x] = L[\BG,x] + \scalprod{\nabla g(x)}{\cdot},\quad x\in \FreeBoundary\Om.
}
The inequality~\eqref{Quadratische} then follows from Proposition~\ref{CubicProp}.
\end{proof}
\subsection{Construction of the extension}
Before we pass to the details, we will survey our method for construction of extensions. Assume~$G$ is a locally concave function on some set~$\omega$ and assume its superdifferential is non-empty at each point~$y\in \omega$. Let~$\tom$ be a set such that~$\omega \subset\tom$. Then, we can extend~$G$ to~$\tom$ by the formula
\eq{\label{LightExtension}
\tG(x) = \inf\Set{G(y) + L(x-y)}{y\in \omega,\ L\in \eth G|_y \text{ and } y\in \Vis_{x}^{\tom}},\quad x \in\tom.
}
The formula is not completely rigorous, because sometimes it is convenient to take all linear functions from the superdifferential of~$G$ at~$y$, sometimes we may pick only one. There are two questions concerning formula~\eqref{LightExtension}: when do we obtain a finite function and when is it locally concave on~$\tom$? We will give two simple sufficient conditions. 

The function~$\tG$ is finite when all~$L \in \eth G|_y$ are uniformly bounded when~$y$ runs through a compact set and for any~$x\in \tom$, the set~$\Vis_{x}^{\tom}$ is compact. 

The function~$\tG$ is locally concave provided for any~$x\in\tom$ there exists a neighborhood~$U_x$ (in the relative topology of~$\tom$) and a point~$y\in \om$ such that~$y\in \Vis_z^{\tom}$ for any~$z\in U_x$ and
\eq{
\tG(x) = G(y) + L(y-x)\quad \text{ for some } L\in \eth G|_y.
} 
Then~$\eth G|_x \ne \varnothing$ and the local concavity follows from Fact~\ref{Fact_superdiff}.

We will also need two theorems about approximation. For more details and proofs, see~\cite{AzagraStolyarov2022} (based on earlier work in~\cite{Azagra2013}).
\begin{Th}\label{OuterApproximation}
Let~$\Om$ be a non-empty strictly convex open proper subset of~$\R^d$. Let~$U$ be an open set that contains~$\cl \Om$. There exists another open set~$\Om'$  such that~$\cl\Om \subset \Om'$\textup,~$\cl\Om' \subset U$\textup, and~$\Om'$ is a strictly convex set with real-analytic boundary.
\end{Th}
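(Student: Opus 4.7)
The plan is to realize $\Om'$ as a sublevel set of a strictly convex real-analytic function approximating a natural convex function that describes $\Om$. To begin, I would introduce the signed distance function
\eq{
F(x) = \dist(x,\cl\Om) - \dist(x,\partial\Om)\,\chi_{\Om}(x),\qquad x\in\R^d,
}
which is $1$-Lipschitz and globally convex: $\dist(\cdot,\cl\Om)$ is convex, and the concavity of $\dist(\cdot,\partial\Om)|_{\cl\Om}$ (valid for convex $\Om$) yields convexity of its negation inside $\Om$. By construction $\cl\Om = \{F\le 0\}$.

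Next I would mollify $F$ by the Gaussian kernel $\phi_\eps(x) = (2\pi\eps)^{-d/2}e^{-|x|^2/(2\eps)}$, setting $F_\eps = F*\phi_\eps$. Because $F$ grows at most linearly and $\phi_\eps$ extends entirely to $\mathbb{C}^d$, the convolution is real-analytic on $\R^d$; convexity is preserved (convolution of a convex function against a non-negative kernel is convex); and $|F_\eps - F|\le \sqrt{d\,\eps}$ uniformly.

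I would finish by adding a strictly convex real-analytic correction $\delta g$, so that $H := F_\eps + \delta g$ is strictly convex real-analytic. In the bounded case one picks a bounded open $V$ with $\cl\Om\subset V\subset \cl V\subset U$, sets $\eta := \dist(\cl\Om,\R^d\setminus V) > 0$, chooses $g(x) = |x|^2$, and then fixes $\eps,\delta$ small enough that $\sup|F_\eps - F| + \delta\sup_V|x|^2 < \eta/4$. The sublevel set $\Om' := \{H < \eta/2\}$ then contains $\cl\Om$, has closure inside $V\subset U$, and its boundary is a real-analytic hypersurface: $H$ is strictly convex real-analytic with a unique interior minimum, so $\nabla H$ does not vanish on $\partial\Om'$, and the implicit function theorem applies.

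The main obstacle is the unbounded case. The correction $g$ must then be strictly convex yet remain bounded along the recession directions of $\Om$, so that the sublevel set $\{H < c\}$ still contains the unbounded $\cl\Om$. Since the recession cone of a strictly convex set contains no line — at most a ray — such a $g$ can be engineered explicitly, e.g., by combining a bounded analytic strictly convex profile like $t\mapsto 1/(1+t)$ along the recession direction with a quadratic term transverse to it, and then tuning it to the asymptotic thickness of $U\setminus\cl\Om$. Balancing $\eps$, $\delta$, and the choice of $g$ in full generality is carried out in~\cite{AzagraStolyarov2022}, building on~\cite{Azagra2013}.
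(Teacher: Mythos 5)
The paper does not prove Theorem~\ref{OuterApproximation}; it cites~\cite{AzagraStolyarov2022} and~\cite{Azagra2013} and uses the statement as a black box, so there is no in-paper argument to compare with. Your sketch for the bounded case (global convexity of the signed distance function, Gaussian mollification to produce a real-analytic convex approximant, a small multiple of~$|x|^2$ to force strict convexity, and a sublevel set whose boundary is a regular hypersurface because the unique critical point lies strictly inside) is correct and is the standard device.

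For the unbounded case two things in the sketch fail. First, the assertion that the recession cone of a strictly convex set is at most a ray is false: the set~$\set{(x,y,z)\in\R^3}{z > \sqrt{x^2+y^2+1}}$ is strictly convex (the Hessian of~$(x,y)\mapsto\sqrt{x^2+y^2+1}$ is positive definite), yet its recession cone is the full-dimensional ice-cream cone~$\set{(a,b,c)}{c\geq\sqrt{a^2+b^2}}$. So one cannot in general isolate a single recession direction, put a bounded profile along it, and add a quadratic transverse to it; the transverse quadratic would be unbounded on~$\cl\Om$. Second, and more fundamentally, fixed-width Gaussian mollification together with a global additive correction cannot force~$\cl\Om'\subset U$ when~$\dist(\partial U,\cl\Om)\to 0$ along a sequence escaping to infinity, which the hypotheses permit. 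In that situation~$F_\eps$ takes values arbitrarily close to~$0$ on~$\R^d\setminus U$ for every fixed~$\eps>0$, so no threshold~$c>0$ and non-negative correction~$\delta g$ keeps the sublevel set~$\set{F_\eps+\delta g<c}$ inside~$U$. Repairing this needs a spatially adapted smoothing scale or Azagra's inf-convolution machinery, which is exactly what~\cite{AzagraStolyarov2022} supplies; ``tuning $\eps$, $\delta$, and $g$'' understates what is being deferred to the reference.
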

\begin{Th}\label{InnerApproximation}
Let~$\Om$ be a non-empty strictly convex open proper subset of~$\R^d$. Let~$V$ be a closed set that lies inside~$\Om$. There exists another open set~$\Om'$ such that~$V \subset \Om'\subset \cl \Om' \subset \Om$ and~$\Om'$ is a strictly convex set with real-analytic boundary.
\end{Th}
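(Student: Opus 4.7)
The plan is to construct $\Om'$ by a three-stage procedure: smooth the Minkowski functional of $\Om$, add a strongly convex quadratic perturbation, and finally pass to a real-analytic approximation obtained by Gaussian convolution. The desired $\Om'$ will then be a sublevel set of the resulting real-analytic strongly convex function.

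First I would reduce to the bounded case: when $V$ is bounded, pick a ball $B_R \supset V$ and replace $\Om$ by $\Om \cap B_R$, which is still open and strictly convex as the intersection of two such sets. After translating so that $0 \in V \subset \Om$, let $p(x) = \inf\{s > 0 \mid x \in s\Om\}$ be the Minkowski functional of $\Om$; it is convex, positively $1$-homogeneous, Lipschitz with some constant $L$, and satisfies $\Om = \{p < 1\}$, $\partial\Om = \{p = 1\}$. Compactness of $V$ gives $M := \max_V p < 1$. Mollify $p$ against a symmetric compactly supported $\phi_\eta$ to obtain $p_\eta = p * \phi_\eta$: Jensen's inequality yields $p_\eta \geq p$ pointwise, and Lipschitz continuity of $p$ gives $\|p_\eta - p\|_\infty \leq L\eta$. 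Set $q(x) := p_\eta(x) + \delta|x|^2$; this is $C^\infty$ with $D^2 q \geq 2\delta I$. For $\eta$ and $\delta$ small, $q \leq M + L\eta + \delta R^2 < 1$ on $V$ and $q \geq 1 + \delta|x|^2 > 1$ on $\R^d\setminus \Om$ (since $p_\eta \geq p \geq 1$ there and $0\in\Om$), so $\{q < 1\}$ is a smooth strongly convex open set with $V \subset \{q<1\}$ and $\cl\{q<1\} \subset \Om$.

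Second, to upgrade to a real-analytic boundary, I would convolve $q$ with a Gaussian kernel $G_\tau$ of small variance $\tau$ to obtain $\tilde q := q * G_\tau$. Since $G_\tau$ is real-analytic with Gaussian decay, $\tilde q$ is real-analytic on $\R^d$. The Hessian commutes with convolution, $D^2\tilde q = (D^2 q) * G_\tau \geq 2\delta I$, so $\tilde q$ is strongly convex and coercive. Writing $q = p_\eta + \delta|\cdot|^2$ and using symmetry of $G_\tau$ together with $\int |y|^2\,dG_\tau(y) = d\tau$, one gets $\tilde q(x) - q(x) = (p_\eta * G_\tau)(x) - p_\eta(x) + \delta d\tau$, which is of order $O(\sqrt{\tau})$ uniformly on $\R^d$ (the Lipschitz part is controlled by $L\int |y|\,dG_\tau \lesssim L\sqrt{\tau}$). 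Choosing $\tau$ small, the strict inequalities $\tilde q < 1$ on $V$ and $\tilde q > 1$ on $\R^d\setminus\Om$ persist. Declare $\Om' := \{\tilde q < 1\}$. Strong convexity of $\tilde q$ gives strict convexity of $\Om'$; since $0 \in V$ one has $\tilde q(0) < 1$, so the global minimum of $\tilde q$ lies strictly below $1$, hence $\nabla \tilde q$ does not vanish on $\partial\Om'$, and the implicit function theorem for real-analytic functions yields a real-analytic boundary.

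The main delicate point is the ordered bookkeeping of the three parameters $\eta, \delta, \tau$: each must be chosen small enough that the next-stage approximation error is absorbed by the margin carved out by the previous stage. The crucial technical input is that the Gaussian-convolution error $|\tilde q - q|$ admits a bound that is \emph{uniform} on all of $\R^d$, not merely on compacta --- this is what lets $\tilde q > 1$ hold on the unbounded set $\R^d\setminus\Om$, and it relies on the decomposition of $q$ into a Lipschitz part plus an explicit quadratic part. The case of unbounded $V$ inside an unbounded $\Om$, which implicitly requires $\dist(V,\partial\Om)>0$, can be handled by the same construction with the initial ball reduction replaced by an exhaustion argument.
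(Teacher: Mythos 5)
The paper does not prove Theorem~\ref{InnerApproximation}; it refers to \cite{AzagraStolyarov2022} and \cite{Azagra2013}, so your proof must stand on its own. For \emph{bounded} $V$ the argument is correct and cleanly organized: the reduction to bounded $\Omega$ via $\Omega\cap B_R$ (strict convexity of the intersection does hold), the inequalities $p_\eta\ge p$ and $\|p_\eta-p\|_\infty\le L\eta$, the strong convexity of $q=p_\eta+\delta|\cdot|^2$, the global $O(\sqrt\tau)$ bound on $\tilde q-q$ via the Lipschitz-plus-quadratic decomposition, and the nonvanishing of $\nabla\tilde q$ on $\{\tilde q=1\}$ are all sound, and the ordered choice $\eta\to\delta\to\tau$ closes the bounded case.

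The gap is the unbounded case, and your parenthetical diagnosis of it is wrong in a way that matters for this paper. The hypotheses ``$V$ closed, $V\subset\Omega$'' do \emph{not} imply $\dist(V,\partial\Omega)>0$: take $\Omega=\{(x,y):y>x^2\}$ and $V=\{(n,n^2+1/n):n\ge 1\}$; then $V$ is closed, unbounded, $V\subset\Omega$, yet $\dist(V,\partial\Omega)=0$. Exactly this situation arises where the paper invokes the theorem: in the proof of Proposition~\ref{LightApproximation} the set $V=\{(x+y)/2:x,y\in\partial\Omega,\ |x-y|\ge\rho(x)\}$ is, for an unbounded strictly convex $\Omega$ such as a paraboloid, unbounded with $\dist(V,\partial\Omega)=0$. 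So one cannot restrict to the sub-case $\dist>0$, nor does the hypothesis ``implicitly require'' it. Moreover ``replace the ball reduction by an exhaustion argument'' is not a proof here: your core mechanism, the Minkowski functional, genuinely fails for unbounded $\Omega$ (after centering, $p$ vanishes identically along every recession direction, so $\{p<1\}\ne\Omega$), and a limit/union of the sets $\Omega'_n$ built from truncations $\Omega\cap B_n$ need not have a real-analytic boundary, need not be strictly convex, and need not have closure inside $\Omega$. Handling unbounded $V$ requires a different global convex defining function and uniform control of the approximation error all the way out to infinity — this is precisely the technical content of the cited references, and it is missing from your argument.
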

%We will also need the following simple fact.
%\begin{Fact}
%For any closed convex cone~$V\subset \R^d$ that does not contain hyperplanes and any~$\delta > 0$, there exists a strongly convex~$C^2$-smooth function~$E_{\delta}\colon V \to \R$ such that~$0 < E_{\delta}(x) < \delta$ for any~$x\in V$.
%\end{Fact}
%We may take
%\eq{
%E_{\delta}(x) = \frac{\delta}{d}\sum\limits_{j=1}^d (1-e^{-\dist(x,H_j)}),
%}
%where the~$H_j$ are appropriately chosen hyperplanes that do not intersect~$V$.

\begin{St}\label{LightApproximation}
Let~$\Om$ be a non-empty strictly convex open proper subset of~$\R^d$. Let~$\rho\colon \partial\Om \to (0,1]$ be a continuous function. There exists a strictly convex open set~$\Om'$ with real-analytic boundary such that~$\cl\Om' \subset \Om$ and if~$x,y\in \partial\Om$ see each other in~$\cl\Om\setminus\Om',$ then~$|x-y| < \rho(x)$.
\end{St}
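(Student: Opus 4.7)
My plan is to reduce the statement to Theorem~\ref{InnerApproximation} by constructing a closed subset $V \subset \Om$ that meets every chord $[x,y]$ with $x,y \in \partial\Om$ and $|x-y|\geq \rho(x)$. Specifically, I take
\[
V := \cl\Set{\tfrac{x+y}{2}}{x,y \in \partial\Om,\ |x-y|\geq \rho(x)},
\]
the closure being taken in $\R^d$. Strict convexity of $\Om$ implies that midpoints of distinct pairs of boundary points lie in $\Om$, so the midpoint set $V_0$ is contained in $\Om$ and hence $V \subset \cl\Om$ automatically.

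The crux is to show $V \cap \partial\Om = \varnothing$. I argue by contradiction: assume $z_n = (x_n+y_n)/2 \to z \in \partial\Om$ with $x_n,y_n \in \partial\Om$ and $|x_n-y_n|\geq \rho(x_n)$. Boundedness of $z_n$ and the identity $y_n = 2z_n - x_n$ force $\{x_n\}$ and $\{y_n\}$ to be simultaneously bounded or simultaneously unbounded. In the bounded case, convergent subsequences yield $x^*,y^* \in \partial\Om$ with $(x^*+y^*)/2 = z$; strict convexity forces $x^* = y^* = z$, whence $|x_n-y_n|\to 0$, contradicting $|x_n-y_n|\geq \rho(x_n)\to \rho(z)>0$ by continuity of $\rho$. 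In the unbounded case I pass to subsequences with unit-vector limits $u = \lim (x_n-z)/|x_n-z|$ and $v = \lim (y_n-z)/|y_n-z|$, so that convexity and closedness of $\cl\Om$ place the half-lines $z+\R_{\geq 0}u$ and $z+\R_{\geq 0}v$ inside $\cl\Om$. The relation $|x_n-z|\,u_n + |y_n-z|\,v_n = 2(z_n-z)\to 0$, combined with $|x_n-z|,|y_n-z|\to \infty$ and the requirement $|u|=|v|=1$, forces $v = -u$; consequently the entire line $z+\R u$ lies in $\cl\Om$, so $z$ is the midpoint of two distinct points of $\cl\Om$, and strict convexity then places $z$ in $\Om$, contradicting $z \in \partial\Om$.

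With $V$ verified to be a closed subset of $\Om$, Theorem~\ref{InnerApproximation} furnishes a strictly convex open set $\Om'$ with real-analytic boundary and $V \subset \Om' \subset \cl\Om' \subset \Om$. The required chord property is then automatic: for any $x,y\in \partial\Om$ with $|x-y|\geq \rho(x)$, the midpoint $(x+y)/2$ lies in $V \subset \Om'$, so $[x,y]\cap \Om' \ne \varnothing$, preventing $x$ and $y$ from seeing each other within $\cl\Om\setminus \Om'$. The only delicate step is the unbounded case of the limit argument; resolving it hinges on the observation that any boundary limit of midpoints of arbitrarily long diverging chords would embed a full line through that boundary point into $\cl\Om$, which strict convexity forbids.
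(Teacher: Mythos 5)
Your argument is essentially the paper's proof: you form the set of midpoints of chords $[x,y]$ with $|x-y|\geq\rho(x)$, observe it is a closed subset of $\Om$, and invoke Theorem~\ref{InnerApproximation} to enlarge it to a strictly convex, real-analytic $\Om'$. The only difference is that the paper asserts closedness and the inclusion $V\subset\Om$ without comment, whereas you supply the details (the compactness argument for bounded sequences and the no-line-in-$\cl\Om$ argument for divergent ones); both steps are correct and the approaches coincide.
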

\begin{proof}
Consider the set
\eq{
V = \Set{z\in \cl \Om}{z = \frac{x+y}{2},\quad x,y\in\partial \Om,\ |x-y|\geq \rho(x)}.
}
This set is closed and lies inside~$\Om$. If two points~$x,y\in \partial \Om$ see each other in~$\cl\Om\setminus V$, then~$|x-y| < \rho(x)$. It remains to apply Theorem~\ref{InnerApproximation} to replace~$V$ with a larger set~$\Om'$.
\end{proof}
The following corollary is obtained by combination of Corollary~\ref{QuadratischeCor} and Proposition~\ref{LightApproximation}.
\begin{Cor}\label{QuadratischeCorLight}
Let~$\Omega$ be a lens that satisfies~\eqref{StrictConvexity} and~\eqref{ConeCondition}. Assume that the boundaries of~$\Omega$ are~$C^2$-smooth. Let~$f\colon \FixedBoundary\Omega\to \R$ be a~$C^2$-smooth function such that~$\BG_{\Omega,f}$ is finite. For any~$\eps >0$ there exists an extension~$\Om'$ whose boundaries are~$C^2$-smooth and a function~$G\colon\Om\to \R$ that is locally concave on~$\Om$ and satisfies the inequalities
\eq{\
\BG_{\Om,f}(x) \leq G(x) \leq \BG_{\Om,f}(x) + \eps,\qquad x\in \Om;
}
moreover\textup, for any~$x\in \FreeBoundary\Om$ there exists a linear function~$L[G,x]$ such that for any compact set~$K \subset \FreeBoundary\Om$ there exists~$c_K > 0$ such that~\eqref{Quadratische} holds true whenever~$x,y \in K$ see each other in~$\Om'$.  The coefficients of the linear functions~$L[G,x]$ are uniformly bounded when~$x$ runs through a compact subset of~$\FreeBoundary\Om$.
\end{Cor}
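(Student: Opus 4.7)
The plan is to combine Corollary~\ref{QuadratischeCor} with Proposition~\ref{LightApproximation} applied to the strictly convex set~$\OmOne$. First, I would invoke Corollary~\ref{QuadratischeCor} to produce the function~$G$ on~$\Om$ with the required two-sided bound $\BG_{\Om,f}(x) \leq G(x) \leq \BG_{\Om,f}(x)+\eps$ together with linear functions~$L[G,x]$ at each~$x\in \FreeBoundary\Om$ satisfying~\eqref{Quadratische} whenever $x,y\in K$ and $|x-y|<\eps_K$. The uniform boundedness of the coefficients of~$L[G,x]$ on compacta is inherited from that corollary, so no further work is needed at this step; the task is merely to upgrade the metric constraint $|x-y|<\eps_K$ into the visibility constraint ``$x,y$ see each other in~$\Om'$''.

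The key observation is that, for $x,y\in \FreeBoundary\Om=\partial\OmOne$, convexity of~$\OmOne$ forces $[x,y]\subset \cl\OmOne$. Consequently, whenever $\tOmOne$ is an open set with $\cl\tOmOne\subset \OmOne$ and $\Om' = \cl\OmNull\setminus \tOmOne$, the points $x$ and $y$ see each other in~$\Om'$ if and only if $[x,y]\cap\tOmOne=\emptyset$, equivalently, if and only if they see each other in $\cl\OmOne\setminus \tOmOne$. This is precisely the setting of Proposition~\ref{LightApproximation} applied to~$\OmOne$, which will produce $\tOmOne$ with real-analytic boundary and the desired visibility-implies-closeness property.

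It remains to select a continuous $\rho\colon \partial\OmOne\to(0,1]$ so that $\rho(x)<\eps_K$ for every~$x$ in every compact $K\subset\FreeBoundary\Om$ simultaneously. I would fix a compact exhaustion $K_1\subset K_2\subset \cdots$ of $\FreeBoundary\Om$ and, after replacing $\eps_{K_n}$ by $\min_{m\leq n}\eps_{K_m}$, assume this sequence is nonincreasing. A standard partition-of-unity (or Urysohn) construction then yields a continuous $\rho$ with $\rho\leq \min(1,\eps_{K_n}/2)$ on each $K_n$. Since any compact $K\subset\FreeBoundary\Om$ is contained in some~$K_N$, we have $\rho<\eps_K$ on~$K$. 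Hence, if $x,y\in K$ see each other in~$\Om'$, Proposition~\ref{LightApproximation} gives $|x-y|<\rho(x)<\eps_K$, and~\eqref{Quadratische} for~$G$ applies with the same constant~$c_K$.

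The resulting extension $\Om' = \cl\OmNull\setminus\tOmOne$ has real-analytic free boundary (from Proposition~\ref{LightApproximation}) and $C^2$-smooth fixed boundary (by hypothesis), so its boundaries are $C^2$-smooth as required. I do not foresee any substantial obstacle in this argument; the only mildly delicate point is assembling a single continuous function~$\rho$ with the correct compact-wise estimates via the exhaustion argument, which is routine but does require the preliminary monotonicity replacement of the~$\eps_{K_n}$.
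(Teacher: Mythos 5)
Your proof is correct and takes exactly the approach the paper intends, which is stated there in a single sentence (combine Corollary~\ref{QuadratischeCor} with Proposition~\ref{LightApproximation}); you have filled in the details accurately, including the key observation that for $x,y\in\partial\OmOne$ visibility in $\Om'=\cl\OmNull\setminus\tOmOne$ reduces to visibility in $\cl\OmOne\setminus\tOmOne$ because $[x,y]\subset\cl\OmOne$ by convexity. The exhaustion argument for building a single continuous $\rho$ is the right routine step, and passing the constants $c_{K}$, $\eps_K$ from a member $K_N$ of the exhaustion down to an arbitrary compact $K\subset K_N$ is sound.
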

Now we are ready to define our extension~$\tG$ by a formula similar to~\eqref{LightExtension}:
\eq{\label{LightExtensionOut}
\tG(z) = \begin{cases}
G(z),\quad &z\in \Om;\\
\inf\set{G(y)+L[G,y](z-y)}{y\in \Vis_{z}^{\Om'}\cap\FreeBoundary\Om},\quad &z\in\Om'\setminus\Om.
\end{cases}
}
\begin{Le}\label{SdLemma}
For any~$x\in\FreeBoundary\Om$ there exists a relatively open set~$U_x\subset \Om'\setminus\interior\Om$ that contains~$x$ and such that for any~$z\in U_x$ the value~$\tG(z)$ is finite and the superdifferential of~$\tG$ at~$z$ is non-empty. 
\end{Le}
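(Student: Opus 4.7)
The plan is to follow the strategy sketched before the lemma: for each $z$ in a small neighborhood $U_x$ of $x$, I produce an (approximate) minimizer $y^* = y^*(z) \in \FreeBoundary\Om$ of the infimum in~\eqref{LightExtensionOut}, and then show that the linear function $L := L[G,y^*]$ furnished by Corollary~\ref{QuadratischeCorLight} lies in $\eth\tG|_z$. First I would fix a compact neighborhood $K \subset \FreeBoundary\Om$ of $x$ and take $U_x \subset \Om'\setminus\interior\Om$ small enough that $K \subset \Vis_z^{\Om'}$ for every $z \in U_x$, which is possible since $x$ is an interior point of the open set $\Om'$. Taking $y = x$ in~\eqref{LightExtensionOut} gives the upper bound $\tG(z) \leq G(x) + L[G,x](z-x)$. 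For the lower bound, the quadratic inequality~\eqref{Quadratische} applied with the roles of $x$ and $y$ exchanged, combined with the uniform bound $\|L[G,y]\| \leq C_K$, yields
\[
G(y) + L[G,y](z - y) \;\geq\; G(x) + c_K|y - x|^2 - C_K|z - x|
\]
for $y \in K$ sufficiently close to $x$; for $y \in K$ away from $x$ one obtains a uniform lower bound from the boundedness of $G$ and $\|L[G,\cdot]\|$ on $K$. This shows $\tG(z)$ is finite and that any near-minimizer $y^*$ satisfies $|y^* - x|^2 \lesssim |z - x|$; an exact minimizer $y^* \in K$ exists by compactness.

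Next I would set $L := L[G, y^*]$ and verify that $L \in \eth\tG|_z$. Because $\tG(z) = G(y^*) + L(z - y^*)$, this amounts to showing $\tG(w) \leq G(y^*) + L(w - y^*)$ for every $w \in \Vis_z^{\Om'}$. Splitting by the location of $w$: for $w \in \Vis_z^{\Om'} \cap (\Om'\setminus\Om)$ the inequality follows directly from~\eqref{LightExtensionOut} once one knows $y^* \in \Vis_w^{\Om'} \cap \FreeBoundary\Om$; for $w \in \Vis_z^{\Om'} \cap \Om$ it follows from the supergradient property of $L[G, y^*]$ at $y^*$ provided by Proposition~\ref{ExistenceOfTangent} and Corollary~\ref{QuadratischeCorLight}, once one knows $w \in \Vis_{y^*}^{\Om}$.

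The hard part will be verifying these two visibility conditions uniformly in $w$: a segment $[z,w] \subset \Om'$ that is nearly tangent to $\partial\Om'_1$ can be pushed inside $\Om'_1$ by replacing the endpoint $z$ with $y^*$. To handle this, I would exploit that, by Corollary~\ref{QuadratischeCorLight} and Proposition~\ref{LightApproximation}, the set $\Om'_1$ may be chosen strictly convex with smooth boundary, so the gradient of its defining function is non-vanishing on $\partial\Om'_1$; combined with $|y^* - z| \lesssim \sqrt{|z - x|}$, this lets the perturbation be absorbed into the angle at which $[z,w]$ meets $\partial\Om'_1$ away from tangent directions. The tangent case and the corresponding statement $w \in \Vis_{y^*}^{\Om}$ would be settled by a limiting argument using the closedness of the superdifferential set under pointwise limits, together with convexity of $\cl\OmNull$ and strict convexity of $\OmOne$ to control segments in $\Om$. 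The whole argument requires shrinking $U_x$ in a way that depends only on compact subsets of $\FreeBoundary\Om$, and this uniformity is the technically most delicate point.
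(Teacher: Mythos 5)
Your plan has the right shape --- show that the infimum in~\eqref{LightExtensionOut} is effectively realized by some $y$ near $x$, then use $L[G,y]$ as the superdifferential --- and the role-swapped use of~\eqref{Quadratische} to get the lower bound $G(y)+L[G,y](z-y)\geq G(x)-C_K|z-x|+c_K|x-y|^2$ is exactly the computation the paper makes. But you flag the visibility step as ``the technically most delicate point'' and only sketch a plan for it; that is precisely where the paper inserts the decisive idea you are missing, and I do not think your sketch (pushing tangent segments inward via the non-vanishing gradient of the defining function of $\partial\Om'_1$) closes the gap as stated.

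The device the paper uses is a single hyperplane $H$ that strictly separates $x$ from $\cl\Om'_1$ (possible since $\cl\Om'_1\subset\OmOne$ is compact and $x\in\partial\OmOne$). Once $H$ is fixed, everything on the $x$-side of $H$ lies outside $\Om'_1$, so if both $y$ and $w$ are on that side and in $\cl\OmNull$, the segment $[y,w]$ automatically lies in $\Om'$. This makes the visibility statement trivial: you only need to show that for $z$ close enough to $x$ the infimum is beaten by some $y$ on the $x$-side of $H$, and then that $y$ sees a whole neighborhood of $z$ for free. The paper does this by a competitor argument: for any candidate $p$ on the far side of $H$, take $y$ to be the intersection of the line through $z$ and $p$ with $\FreeBoundary\Om$ on the $x$-side; then~\eqref{Quadratische} (applied at $p$), the uniform Lipschitz bound $M$ on the $L[G,\cdot]$, and the fact that $|p-y|$ is bounded away from zero while $|z-y|\to 0$ give $G(y)+L[G,y](z-y)<G(p)+L[G,p](z-p)$. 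No exact minimizer, no estimate of the form $|y^*-x|\lesssim\sqrt{|z-x|}$, and no case analysis over $w\in\Om$ versus $w\notin\Om$ is needed.

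Two further concrete issues with the proposal. First, your claim that one can choose $U_x$ so that $K\subset\Vis_z^{\Om'}$ for every $z\in U_x$ is not justified and is also not what you want: segments from $z$ to distant points of $K$ on the convex surface $\FreeBoundary\Om$ may cut through $\Om'_1$. Second, your lower bound argument tacitly assumes the quadratic inequality~\eqref{Quadratische} for \emph{all} $y$ in the relevant visibility set, but Corollary~\ref{QuadratischeCorLight} only guarantees it when $x$ and $y$ see each other in $\Om'$; for $y\in\Vis_z^{\Om'}$ not visible from $x$ you have nothing, so the step ``any near-minimizer satisfies $|y^*-x|^2\lesssim|z-x|$'' does not follow without the separating-hyperplane reduction or a replacement for it.
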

\begin{figure}
\hspace{2,5cm}
\includegraphics[width=0.7\textwidth]{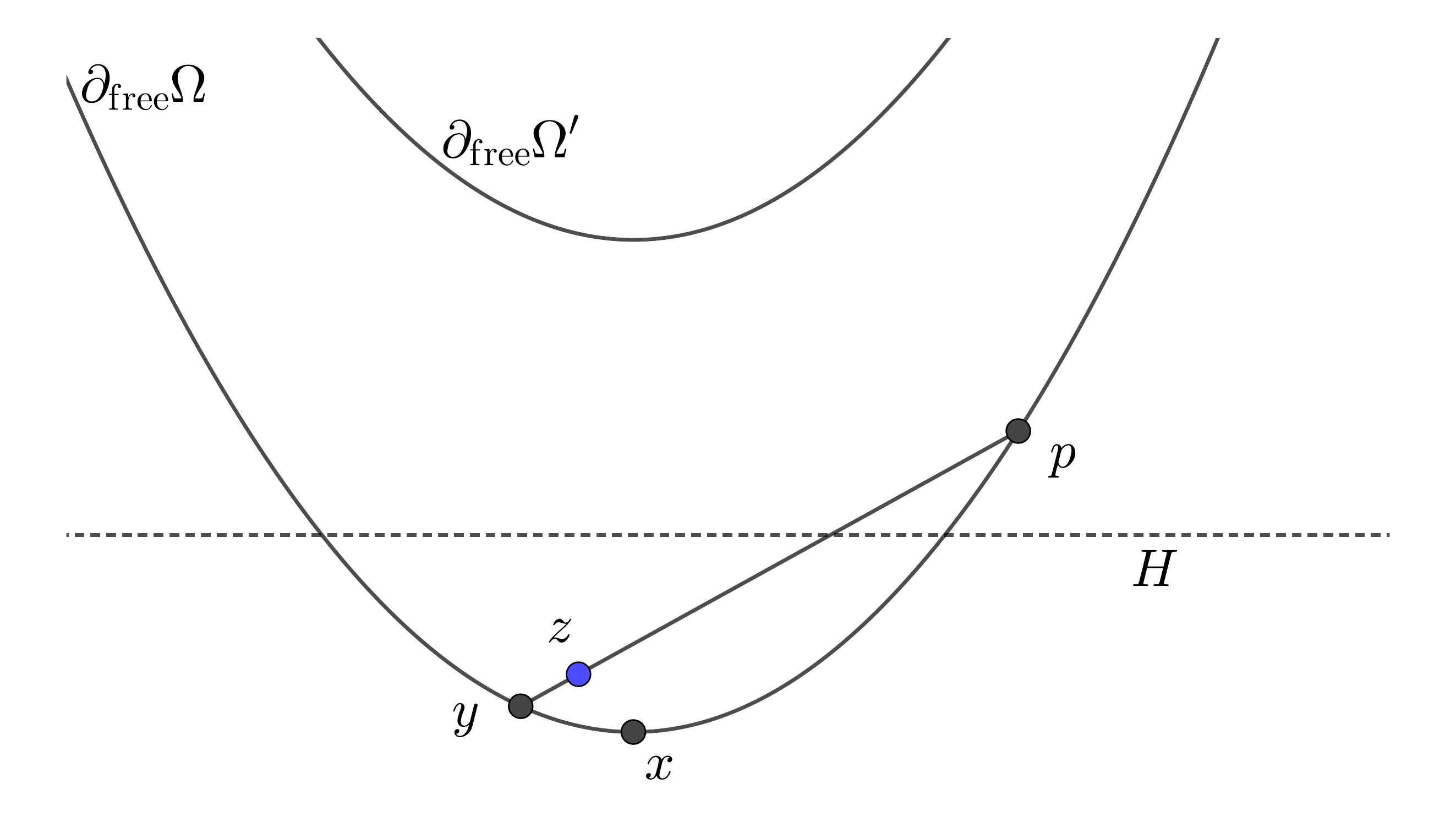}
\caption{Illustration to the proof of Lemma~\ref{SdLemma}.}
\label{Ill9}
\end{figure}
\begin{proof}
Let~$H$ be a hyperplane that separates~$x$ from~$\FreeBoundary \Om'$ (say,~$H$ is closer to~$x$ than to the latter set). It suffices to prove that if~$z\in \Om'\setminus\interior \Om$ is sufficiently close to~$x$, then the infimum in~\eqref{LightExtensionOut} is attained at~$y$ that lies on the same side of~$H$ as~$x$. See Fig.~\ref{Ill9} for visualization. Then~$y$ sees a neighborhood of~$z$ and~$L[G,y]$ belongs to the superdifferential of~$\tG$ at~$z$.  

Let~$p\in\FreeBoundary\Om$ be a point that lies on the other side of~$H$ than~$x$ and that sees~$z$ in~$\Om'$. We wish to prove that there exists~$y$ on the same side of~$H$ as~$x$  such that
\eq{\label{WantedSuperDif}
G(y) + L[G,y](z-y) \leq G(p) + L[G,p](z-p),
} 
provided~$z$ is sufficiently close to~$x$. Let~$y$ be the intersection of the line passing through~$z$ and~$p$ with~$\FreeBoundary\Om$ lying on the same side of~$H$ as~$x$. Let also~$K \subset \FreeBoundary\Om$ be a compact set that contains~$x$ with all the points it can see in~$\Om'$ and all the points the latter points can see. Let~$M$ be the supremum of the Lipschitz constants of the functions~$L[G,q]$ when~$q\in K$. Then,
\mlt{
G(y) + L[G,y](z-y) \leq G(y) + M|z-y| \Leqref{Quadratische} G(p) + L[G,p](y-p) - c_{K}|p-y|^2 + M|z-y| \leq\\ G(p) + L[G,p](z-p) - c_K|p-y|^2 + 2M|z-y|.
} 
We see that~\eqref{WantedSuperDif} indeed holds true provided~$z$ is sufficiently close to~$x$, because in this case~$y$ is also sufficiently close to~$x$ while~$|p-y|$ is bounded away from zero.
\end{proof}
\begin{proof}[Proof of Theorem~\ref{Extension}.]
We construct the set~$V\subset \Om_1$ as the complement to the union of the sets~$U_x$,~$x\in\FreeBoundary\Om$, provided by Lemma~\ref{SdLemma}. We apply Theorem~\ref{InnerApproximation} and obtain a strictly convex set~$\tOmOne$ such that~$V \subset \tOmOne\subset \cl \tOmOne\subset \OmOne$. Set~$\tOm = \cl\OmNull \setminus \tOmOne$. Then,~$\tG$ constructed by~\eqref{LightExtensionOut} is locally concave on~$\tOm$ (it is locally concave by Fact~\ref{Fact_superdiff}). Therefore,~$\BG_{\tOm,f}(z) \leq \tG(z)$ for any~$z\in \tOm$ and
\eq{
\BG_{\Om,f}(x)\leq\BG_{\tOm,f}(x) \leq G(x) \leq \BG_{\Om,f}(x) + \eps,\qquad x\in \Om.
}
\end{proof}

\subsection{Proof of Theorem~\ref{ExtensionGen}}
In order to prove Theorem~\ref{ExtensionGen},  we will need to extend a locally convex function via formula~\eqref{LightExtension} over the fixed boundary.
\begin{Le}\label{LocalConcavityOfOuterExtension}
Let~$\Om$ be a lens that satisfies the requirements~\eqref{StrictConvexity} and~\eqref{ConeCondition}. Let~$\omega \supset \Om$ be a set whose interior contains~$\Om \setminus \FreeBoundary\Om$. Let~$G\colon \omega\to\R$ be a locally Lipschitz function that is locally concave on~$\Om$ and has non-empty superdifferential at each point. Assume~$\Om'$ is an open convex set that contains~$\Om$ and such that each~$x\in \Om'$ sees only a compact subset of~$\FixedBoundary\Om$ in~$\cl\Om'\setminus\OmNull$. Then\textup, the function
\eq{\label{LightExtensionOut2}
\tG(x) = \begin{cases}
G(x),\quad &x\in \Om;\\
\inf\Set{G(y) + L(x-y)}{y\in \Om\cap \Vis_{x}^{\Om'\setminus\OmOne}, L\in \eth G|_y},\quad&x\in \Om'\setminus\OmNull,
\end{cases}
}
is finite and locally concave on~$\Om'\setminus\OmOne$.
\end{Le}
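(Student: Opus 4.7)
The plan is to invoke Fact~\ref{Fact_superdiff}: it suffices to establish that $\tG$ is finite on $\Om'\setminus\OmOne$ and that $\eth \tG|_z^{\Om'\setminus\OmOne}$ is non-empty at every such $z$. The argument splits into two cases, depending on whether $z\in\Om$ or $z\in\Om'\setminus\cl\OmNull$.

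If $z\in\Om$, then $\tG(z)=G(z)$ is finite, and I claim that any $L\in\eth G|_z^\omega$ (non-empty by hypothesis) already lies in $\eth \tG|_z^{\Om'\setminus\OmOne}$. Indeed, for $y\in\Vis_z^{\Om'\setminus\OmOne}$ with $y\in\Om$, the convexity of $\cl\OmNull$ gives $[z,y]\subset\cl\OmNull\setminus\OmOne=\Om\subset\omega$, so $y\in\Vis_z^\omega$ and hence $\tG(y)=G(y)\le G(z)+L(y-z)$; for $y\in\Om'\setminus\cl\OmNull$, the pair $(z,L)$ is itself admissible in the infimum defining $\tG(y)$, yielding the same bound.

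The substantive work is the case $z\in\Om'\setminus\cl\OmNull$. Set $K_z:=\Vis_z^{\cl\Om'\setminus\OmNull}\cap\FixedBoundary\Om$, which is compact by hypothesis. The key reduction I aim to prove is
$$\tG(z)=\inf\bigl\{G(y_0)+L_0(z-y_0) : y_0\in K_z,\ L_0\in\eth G|_{y_0}^\omega\bigr\}.$$
For the nontrivial direction, given an eligible pair $(y,L)$, let $y_0\in K_z$ be the first point where $[z,y]$ meets $\partial\OmNull$; then $[y_0,y]\subset\cl\OmNull\setminus\OmOne=\Om\subset\omega$, and the superdifferential condition at $y$ forces $L(y-y_0)\le G(y)-G(y_0)$, whence $G(y)+L(z-y)\ge G(y_0)+L(z-y_0)$. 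To trade $L$ for any $L_0\in\eth G|_{y_0}^\omega$, I analyse the concave function $\phi(t):=G(y_0+tv)$ on $[0,|y-y_0|]$ in the direction $v:=(y-y_0)/|y-y_0|$. Applying the superdifferential inequalities at $y$ (slopes to the left of $|y-y_0|$ lie above $L(v)$) and at $y_0$ (slopes from the right of $0$ lie below $L_0(v)$), together with the monotonicity of slopes of a concave function, yields $L(v)\le \phi'_+(0)\le L_0(v)$, so $L(z-y_0)\ge L_0(z-y_0)$ since $z-y_0$ is antiparallel to $v$. Finiteness of $\tG(z)$ is automatic from the reduction, since $G$ is bounded on the compact set $K_z$ and the local Lipschitz property of $G$ near $K_z$ bounds $|L_0|$ uniformly.

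Finally, extracting a minimising pair $(y_*,L_*)$ of the reduced infimum (available by compactness and the closedness of the superdifferential), one verifies $L_*\in\eth \tG|_z^{\Om'\setminus\OmOne}$: for $y\in\Vis_z^{\Om'\setminus\OmOne}$, one checks that either the pair $(y_*,L_*)$ is admissible in the infimum defining $\tG(y)$ (when $y\in\Om'\setminus\cl\OmNull$, using the position of $y_*$ on $\partial\OmNull$ together with convexity of $\Om'$ to transfer the visibility $[z,y_*]\subset\cl\Om'\setminus\OmNull$ into $[y,y_*]\subset\Om'\setminus\OmOne$), or $L_*\in\eth G|_{y_*}^\omega$ directly produces the bound $G(y)\le G(y_*)+L_*(y-y_*)$ once the segment $[y,y_*]$ is shown to stay inside $\omega$ (when $y\in\Om$). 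The main obstacle will be proving the directional monotonicity $L(v)\le L_0(v)$ cleanly via the one-dimensional concave function $\phi$, and verifying these visibility transfers in the last step without imposing hypotheses beyond those already assumed on $\Om'$.
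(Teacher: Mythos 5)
Your case $z\in\Om$ and the reduction of the infimum to the compact set $K_z=\Vis_z^{\cl\Om'\setminus\OmNull}\cap\FixedBoundary\Om$ via the one-dimensional slope comparison are both correct; the latter is a clean way to make precise what the paper states tersely as ``by local concavity of $G$ we may consider only $y\in\FixedBoundary\Om\cap\Vis_x^{\Om'\setminus\OmNull}$.'' Finiteness of $\tG$ then follows exactly as you say.

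The gap is the one you flag yourself at the end: the visibility transfers in your last step are false, so a minimizing pair $(y_*,L_*)$ does \emph{not} in general furnish a member of the full superdifferential $\eth\tG|_z^{\Om'\setminus\OmOne}$. Concretely, take $\OmNull=\{|x|<1\}$, $\OmOne=\{|x|<1/2\}$, $\Om'=\{|x|<3\}$ in $\R^2$, $z=(2,0)$, $y_*=(1,0)$; then $y=(-2,3/2)$ satisfies $y\in\Vis_z^{\Om'\setminus\OmOne}$, yet $[y,y_*]$ passes through $\OmOne$. So $y_*$ neither sees $y$ in $\Om'\setminus\OmOne$, nor (when $\omega$ does not contain $\OmOne$, which the hypotheses permit) in $\omega$; thus no admissibility of $(y_*,L_*)$ for $\tG(y)$ and no applicability of $L_*\in\eth G|_{y_*}^\omega$ is available for such $y$. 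The paper avoids this by establishing the inequality $\tG(y)\le\tG(z)+L_*(y-z)$ only for $y$ in a sufficiently small neighbourhood of $z$: since $[z,y_*]\subset\cl\Om'\setminus\OmNull$ is strictly separated from $\OmOne$, every $y$ close enough to $z$ still sees $y_*$ in $\Om'\setminus\OmOne$ (and, if moreover $y\in\cl\OmNull$, then $[y,y_*]\subset\Om\subset\omega$). Such a local bound at every point is enough: restricting to any segment in $\Om'\setminus\OmOne$ it gives concavity there, which is precisely what Fact~\ref{Fact_superdiff} is meant to deliver. So replace the claim ``$L_*\in\eth\tG|_z^{\Om'\setminus\OmOne}$'' by the local version and the argument closes; as stated, the global claim cannot be salvaged under the hypotheses given.
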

\begin{proof}
By local concavity of~$G$, we may consider only~$y\in \FixedBoundary\Om \cap \Vis_{x}^{\Om'\setminus\OmNull}$ when calculating the infimum in~\eqref{LightExtensionOut2}. Since the sets~$\FixedBoundary\Om \cap \Vis_{x}^{\Om'\setminus\OmNull}$ are compact and the function~$G$ is locally Lipschitz on~$\omega$,~$\tG$ does not attain the value~$-\infty$.  Thus, it remains to verify the local concavity of~$\tG$. For that, we will prove~$\tG$ has a non-empty superdifferential at each point~$x\in \Om'\setminus\OmNull$.

Let~$x\in \Om'\setminus\OmNull$ and assume~$\tG(x) = G(y) + L(x-y)$,~$y\in  \FixedBoundary\Om \cap \Vis_{x}^{\Om'\setminus\OmNull}$ and~$L\in \eth G|_y$. It suffices to prove~$\tG(z)\leq G(y) + L(z-y)$ when~$z$ lies in a sufficiently small neighborhood of~$x$. This is true since~$z$ sees~$y$ in~$\Om'\setminus\OmOne$ (now we are using the initial formula~\eqref{LightExtensionOut2}). The reasoning in the case when the infimum that defines~$\tG(x)$ is attained at a sequence of~$y_n$ does not differ. 
\end{proof}
\begin{Fact}\label{CanSeeCompact}
Let~$\OmNull$ be a strictly convex open set\textup, let~$x$ be a point of its boundary. There exists a neighborhood~$U_x$ in~$\R^d\setminus \OmNull$ such that any point~$y\in U_x$ can see only a compact subset of~$\partial \OmNull$ in~$\R^d\setminus\OmNull$.
\end{Fact}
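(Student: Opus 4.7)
The plan is to argue by contradiction, invoking only that $\OmNull$ is open, convex, and has no line segment in its boundary. Suppose no such neighborhood $U_x$ exists. Then for each $n$ there is a point $y_n \in (\R^d\setminus\OmNull)\cap B_{1/n}(x)$ for which the set of points of $\partial\OmNull$ visible from $y_n$ in $\R^d\setminus\OmNull$ is not compact. This visible set is closed in $\R^d$ (the condition $[y_n,z]\cap\OmNull=\emptyset$ is preserved under limits since $\OmNull$ is open), so non-compactness forces unboundedness, and we may choose $z_n\in\partial\OmNull$ with $|z_n|\to\infty$ and $[y_n,z_n]\subset\R^d\setminus\OmNull$. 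After passing to a subsequence, $v_n:=(z_n-y_n)/|z_n-y_n|\to v$ for some unit vector $v$, and we set $R_v:=\{x+tv : t\geq 0\}$.

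The first key step is to show $R_v\subset\R^d\setminus\OmNull$. For any fixed $t\geq 0$, we have $t\leq|z_n-y_n|$ for all large $n$, so $y_n+tv_n\in[y_n,z_n]\subset\R^d\setminus\OmNull$; since $y_n+tv_n\to x+tv$ and $\R^d\setminus\OmNull$ is closed, the point $x+tv$ lies in $\R^d\setminus\OmNull$.

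The second key step is to show $R_v\subset\cl\OmNull$. Because $y_n$ is bounded while $|z_n-y_n|\to\infty$, we obtain $z_n/|z_n|\to v$. Thus $v$ is a recession direction of $\cl\OmNull$: for any fixed $t\geq 0$, the convex combination $(1-t/|z_n|)\,x + (t/|z_n|)\,z_n$ belongs to $\cl\OmNull$ and converges to $x+tv$, which therefore lies in $\cl\OmNull$.

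Combining the two steps gives $R_v\subset\cl\OmNull\setminus\OmNull=\partial\OmNull$, so the segment $[x,x+v]\subset\partial\OmNull$, contradicting \eqref{StrictConvexity}. The heart of the argument is the simultaneous extraction of a unit direction $v$ that is both an ``escape'' direction outside $\OmNull$ (from the visibility assumption) and a recession direction of $\cl\OmNull$ (from the fact that the visible points $z_n$ lie on $\partial\OmNull$ and run to infinity); once both are in hand, strict convexity closes the argument in one line. I expect the only mild subtlety to be checking that $|z_n|\to\infty$ and the normalizations of $z_n$ and $z_n-y_n$ share the same limit, which is immediate from $y_n\to x$.
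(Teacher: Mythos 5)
Your proof is correct. The paper states Fact~\ref{CanSeeCompact} without any proof, so there is nothing to compare against; your compactness argument supplies exactly the missing justification. The key maneuver---extracting from the normalized directions $(z_n-y_n)/|z_n-y_n|$ a limit unit vector $v$ that is simultaneously forced outside $\OmNull$ (because the segments $[y_n,z_n]$ avoid $\OmNull$ and $y_n\to x$) and forced into $\cl\OmNull$ as a recession direction (because $z_n\in\partial\OmNull$ with $|z_n|\to\infty$, so $z_n/|z_n|\to v$)---is the natural route, and it places the entire ray $x+\R_+v$ on $\partial\OmNull$, contradicting~\eqref{StrictConvexity}. The two supporting observations you flag, namely that each visible set is closed (so non-compactness forces unboundedness) and that $z_n/|z_n|$ and $(z_n-y_n)/|z_n-y_n|$ share the same limit since $y_n$ stays bounded, both check out, and no boundedness or cone-type hypothesis on $\OmNull$ is needed beyond strict convexity.
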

\begin{proof}[Proof of Theorem~\ref{ExtensionGen}]
Fix~$p\in \Om\setminus \FixedBoundary\Omega$ and~$\eps > 0$.

First, we construct an open strictly convex set~$\Om_{-1}$ with real analytic boundary such that it contains the closure of~$\OmNull$ and any point~$x\in \cl\Om_{-1}\setminus\OmNull$ can see only a compact subset of~$\FixedBoundary\Om$ in~$\Om_{-1}\setminus \OmNull$; this is done by a combination of Theorem~\ref{OuterApproximation} and Fact~\ref{CanSeeCompact}.

Second, we construct a strictly convex set~$\Om'$ such that~$\cl\Om'\subset \OmNull$,~$\cl\OmOne\subset \Om'$,~$p\in \Om'$, and if~$y$ and~$z$ from~$\FixedBoundary\Om$ see each other in~$\cl\OmNull\setminus\Om'$, then~$|y-z|\leq 1$; this is done by application of Proposition~\ref{LightApproximation} with~$\rho\equiv \delta$, where~$\delta$ is sufficiently small.

Note that any point~$x\in \cl\Om_{-1}\setminus\Om'$ can see only a compact subset of~$\partial\Om'$ in~$\cl\Om_{-1}\setminus \Om'$. We also note that the restriction of the function~$\BG_{\Om,f}$ to the set~$\Om'$ is locally Lipschitz and has a non-empty superdifferential at each point; the set~$\set{\nabla L}{L \in \eth \BG_{\Om,f}|_x, x\in K}$ is uniformly bounded for any compact set~$K\subset \Om'$. Thus, we may construct the function~$G\colon \cl\Om_{-1}\setminus \OmOne\to \R$ by the formula
\eq{
G(x) = \begin{cases}
\BG_{\Om,f}(x),\quad &x\in \cl\Om'\setminus\OmOne;\\
\inf\Set{\BG_{\Omega,f}(y)+L(x-y)}{y\in \Om'\cap \Vis_{x}^{\cl\Om_{-1}\setminus\OmOne}, L\in \eth \BG_{\Om,f}|_y},\quad&x\in \cl\Om_{-1}\setminus\Om'.
\end{cases}
}
By Lemma~\ref{LocalConcavityOfOuterExtension},~$G$ is a locally concave function. What is more,~$G$ is continuous on~$\partial \Om_{-1}$. Therefore, the function
\eq{
\BG_{\cl\Om_{-1}\setminus\OmOne, G|_{\partial \Om_{-1}}}
}
may be 'extended' through the free boundary by Theorem~\ref{Extension}. Let~$\tOm$ be the extension of~$\cl\Om_{-1}\setminus \OmOne$, let~$\tG$ be the 'extended' function. Then,
\eq{
\tG(p) \leq \BG_{\cl\Om_{-1}\setminus\OmOne, G|_{\partial \Om_{-1}}}(p) + \eps \leq G(p) + \eps  = \BG_{\Om,f}(p) + \eps.
}
\end{proof}

\section{Limitations of current methods}\label{S9}
The lemma below justifies the appearance of the auxiliary domain~$\hOmOne$ in~\eqref{ClassC}. It also shows why we are not able to prove~\eqref{CoincidenceFormula} for the points~$x\in \FreeBoundary\Omega$ in Theorem~\ref{IntervalTheorem}. We recall the definition of a cheese domain from~\cite{StolyarovZatitskiy2016}. A set~$\Omega\subset \R^d$ is called a \emph{cheese domain} if it may be represented as
\begin{equation}\label{RepresentationForCheeseDomain}
\Omega =\cl \Omega_0 \setminus \bigcup_{j=1}^N \Omega_j,
\end{equation}
where the domains~$\Omega_j$,~$j=0,1,\ldots,N,$ are strictly convex, open, and bounded; the `holes'~$\Omega_j$,~$j=1,2,\ldots,N$, are mutually separated and also lie inside the interior of~$\Omega_0$.
\begin{Le}\label{CheeseBoundary}
Let~$\Omega\subset \R^2$ be a cheese domain such that the domains~$\Omega_j,$~$j = 1,2,\ldots,N,$ in the representation~\eqref{RepresentationForCheeseDomain} have~$C^1$-smooth boundaries. Let~$\varphi \colon \mathbb{T} \to \partial \Omega_0$ be such that for any arc~$J \subset \mathbb{T}$ the point~$\av{\varphi}{J}$ lies in~$\Omega$. If~$\av{\varphi}{\mathbb{T}} \in \partial \Omega_j$ for some~$j,$ then~$\varphi$ attains two values.
\end{Le}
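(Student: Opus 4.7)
The plan is to exploit the strict convexity and $C^1$-smoothness of $\partial\Omega_j$ at the point $x_0:=\av{\varphi}{\T}\in \partial \Omega_j$ to reduce the statement to a scalar sign condition. Let $\ell$ be the tangent line to $\partial\Omega_j$ at $x_0$ and let $h\colon \R^2\to \R$ be an affine function with $h(x_0)=0$ and $\{h>0\}$ the open half-plane containing $\Omega_j$ locally. Writing $\partial\Omega_j$ near $x_0$ as the graph of a $C^1$-function whose value and gradient vanish at the origin, we obtain
\eq{\label{OuterC1plan}
h(p)\leq o(|p-x_0|) \qquad \text{as}\ p\to x_0,\ p\notin \Omega_j.
}

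The next step is to relate the values of $\varphi$ to the averages over small arcs and their complements. Fix a Lebesgue point $s\in \T$ of $\varphi$ and, for small $\delta>0$, let $J_\delta$ be the arc of length $\delta$ centered at $s$, and set $A_\delta=\av{\varphi}{J_\delta}$, $B_\delta=\av{\varphi}{\T\setminus J_\delta}$. Then $A_\delta\to \varphi(s)$, while the identity $\delta A_\delta+(1-\delta)B_\delta=x_0$ gives
\eq{
B_\delta-x_0=-\frac{\delta}{1-\delta}(A_\delta-x_0), \qquad |B_\delta-x_0|=O(\delta).
}
Since $B_\delta\in\Omega$, in particular $B_\delta\notin\Omega_j$, and \eqref{OuterC1plan} yields $h(B_\delta)\leq o(\delta)$. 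Because $h$ is affine with $h(x_0)=0$, the displayed relation rewrites as $h(B_\delta)=-\tfrac{\delta}{1-\delta}h(A_\delta)$, whence $h(A_\delta)\geq -o(1)$. Sending $\delta\to 0$, we arrive at the key pointwise inequality $h(\varphi(s))\geq 0$ for a.e.\ $s\in\T$.

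To conclude, I integrate: on the one hand $\int_\T h(\varphi)\geq 0$, on the other hand $\int_\T h(\varphi)=h(\av{\varphi}{\T})=h(x_0)=0$ by affinity of $h$. Thus $h(\varphi)=0$ almost everywhere, i.e., $\varphi(s)\in \ell\cap\partial\Omega_0$ for a.e.\ $s$. The strict convexity of $\Omega_0$ forces $\ell\cap\partial\Omega_0$ to consist of at most two points, and the inclusion $\cl\Omega_j\subset \Omega_0$ gives $\partial\Omega_j\cap\partial\Omega_0=\varnothing$, which rules out the one-point case (else $\varphi$ would be constant and $x_0$ would lie simultaneously on $\partial\Omega_0$ and $\partial\Omega_j$). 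Hence $\varphi$ attains exactly two values.

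The principal obstacle is the passage from the $C^1$-smoothness of $\partial\Omega_j$ to the infinitesimal estimate \eqref{OuterC1plan} and the accompanying bookkeeping that converts a first-order $o(\delta)$ bound into a clean pointwise sign condition on $h\circ\varphi$; the remainder of the argument uses only affinity of $h$, the Lebesgue differentiation theorem, and the elementary fact that a line meets the boundary of a strictly convex planar region in at most two points.
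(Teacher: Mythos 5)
Your proof is correct and follows the same strategy as the paper's: cut a small arc around a Lebesgue point, use the complementary average (which must lie outside $\Omega_j$) together with the $C^1$-tangency of $\partial\Omega_j$ at $x_0$ to derive the pointwise sign condition $h(\varphi(s))\geq 0$, then integrate against the affine function $h$ to pin $\varphi$ to the tangent line $\ell$ and invoke strict convexity of $\Omega_0$. Your explicit $o(\delta)$ bookkeeping and the final step ruling out a constant $\varphi$ make the same argument slightly more detailed than the paper's exposition, but the underlying ideas are identical.
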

%\begin{Rem}
%We may restrict our attention to geometric arcs only, i.\,e., those that do not wind around the circle.
%\end{Rem}
%\begin{Rem}
%In particular, Lemma~\ref{CheeseBoundary} applies to simple domains shown on Figure~\ref{BasicDomains}.
%\end{Rem}
\begin{proof}
Without loss of generality,~$\av{\varphi}{\mathbb{T}} \in \partial \Omega_1$. Let~$\ell$ be the tangent to~$\partial \Omega_1$ at the point~$x=\av{\varphi}{\mathbb{T}}$. We say that a point lies below~$\ell$ if it is strictly separated by~$\ell$ from~$\Omega_1$ and say that it lies above~$\ell$ in the case the point and~$\OmOne$ belong to the same open half-plane generated by~$\ell$. 

We will first prove that~$\varphi$ does not attain values on the part of~$\partial \Omega_0$ lying below~$\ell$. Assume the contrary. Let~$t$ be a Lebesgue point of~$\varphi$ such that~$\varphi(t)$ lies below~$\ell$. Consider the point
\eq{\label{92}
\av{\varphi}{\mathbb{T}\setminus [t-\eps,t+\eps]} = \frac{1}{1-2\eps}\Big(x - 2\eps\av{\varphi}{[t-\eps,t+\eps]}\Big). 
}
When~$\eps$ is sufficiently close to~$0$, the point~$\av{\varphi}{[t-\eps,t+\eps]}$ lies close to~$\varphi(t)$. Thus, the point~\eqref{92} lies in a small neighborhood of~$x$ and above~$\ell$. In particular, it does not belong to~$\Omega$ provided~$\eps$ is sufficiently small. This is a contradiction.

Second, we will prove that~$\varphi$ does not attain values on the part of~$\partial \Omega_0$ lying above~$\ell$. Consider an affine function~$L$ that is equal to zero on~$\ell$ and is negative below~$\ell$. Then,~$L(x) = 0$. On the other hand,
\begin{equation*}
L(x) = \int\limits_{\mathbb{T}}L\big(\varphi(t)\big)\,dt > 0
\end{equation*}
provided~$L(\varphi) > 0$ on a set of non-zero measure (since we have proved that~$L(\varphi)$ is always non-negative).
\end{proof}

Finally we will show that the convexity of~$\OmOne$ is necessary for~\eqref{CoincidenceFormula} (note that the definition of the minimal locally concave function~$\BG_{\Om,f}$ and the Bellman function~$\Bell_{\Om,f}$ do not require convexity of~$\OmOne$). 
\begin{figure}
\begin{center}%\hspace{2,5cm}
\includegraphics[width=0.6\textwidth]{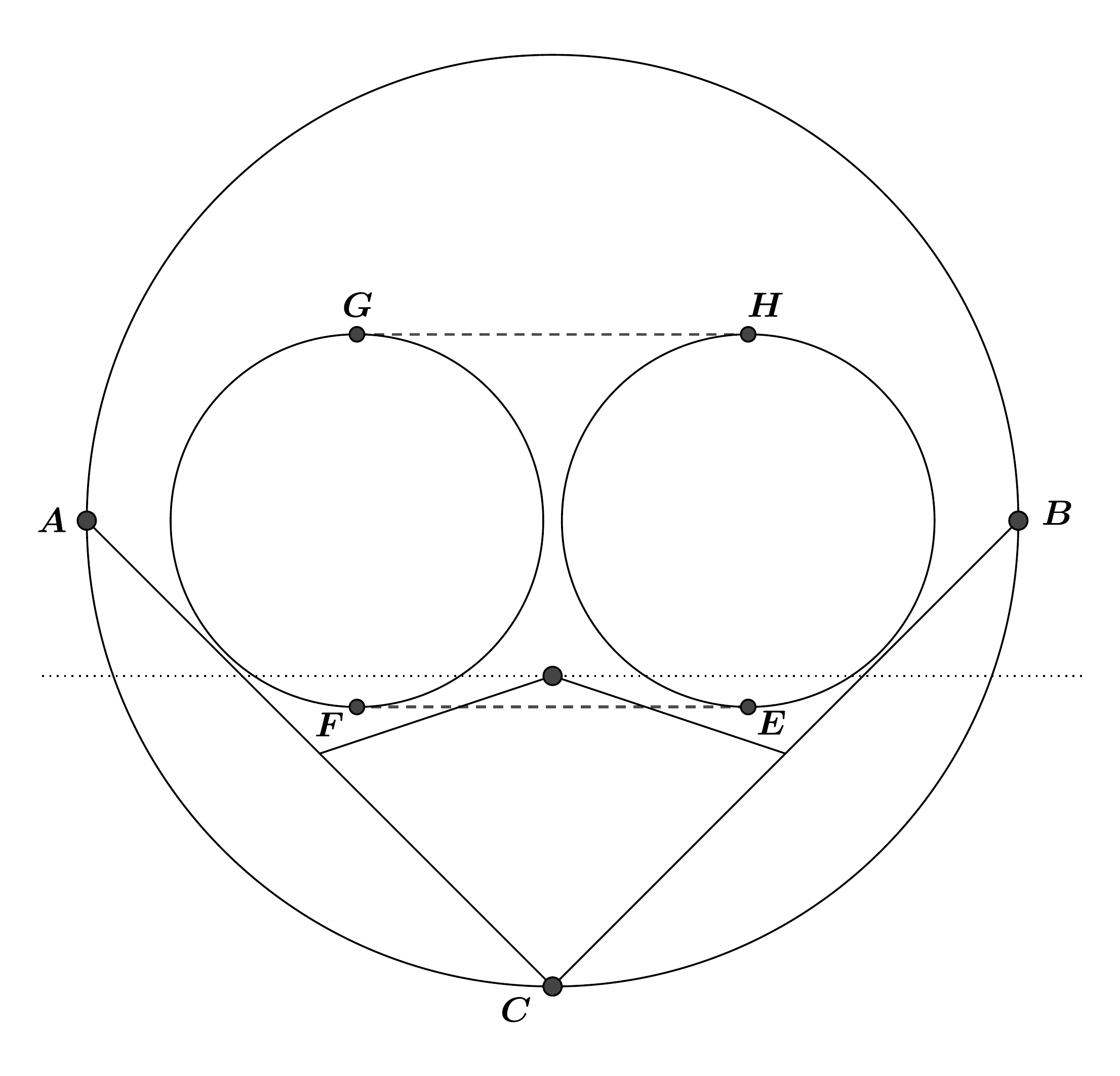}
\caption{The domain~$\Om$ and the set of Bellman points of~$\varphi$.}
\end{center}
\label{AB}
\end{figure}

We will shortly construct an example of~$\Omega$ and~$f$ such that~$\Bell_{\Omega,f} > \BG_{\Omega,f}$. Let~$\Omega_0$ be the unit circle. Consider three points
\begin{equation*}
A = (-1,0),\quad B = (1,0),\quad C = (0,-1).
\end{equation*}
Define the function~$\varphi \colon [0,1]\to \partial \Omega_0$ by the rule:
\begin{equation*}
\varphi(t) = \begin{cases}
A,\quad & [t]\in [0,\frac13);\\
C, \quad & [t] \in [\frac13,\frac23);\\
B,\quad & [t] \in [\frac23,1].
\end{cases}
\end{equation*}
\begin{Fact}\label{DescriptionOfBellmanPoints}
The set of Bellman points of~$\varphi$\textup, i.\,e.\textup, the points~$\av{\varphi}{\J}$\textup,  is 
\begin{equation}\label{BellmanPointsForExample}
AC\cup BC \cup \{x = \alpha A + \beta B + \gamma C\mid \alpha + \beta + \gamma = 1, \gamma \geq \alpha \geq 0, \gamma \geq \beta \geq 0\}.
\end{equation}
\end{Fact}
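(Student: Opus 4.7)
The plan is to perform a direct case analysis on the endpoints $s,t$ of $\J=[s,t]$ relative to the partition points $1/3$ and $2/3$. The average $\av{\varphi}{\J}$ equals $\alpha A + \beta B + \gamma C$ where $\alpha, \gamma, \beta$ are the proportions of $|\J|$ occupied by $\J \cap [0,1/3)$, $\J \cap [1/3,2/3)$, and $\J \cap [2/3,1]$ respectively, and $\alpha+\beta+\gamma=1$. So the task reduces to determining which such triples arise.

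For the forward inclusion I would split into three cases. If $t \leq 2/3$, then $\beta = 0$ and the average lies in the segment $[A,C]$; if $s \geq 1/3$, then $\alpha = 0$ and it lies in $[B,C]$. Between them these two cases cover the pieces $AC \cup BC$ of~\eqref{BellmanPointsForExample} in full: the corner points $A$, $B$, $C$ arise when $\J$ sits inside a single piece of the partition. In the remaining case $s < 1/3 < 2/3 < t$, one computes
\[
\alpha = \frac{1/3 - s}{t-s}, \quad \gamma = \frac{1/3}{t-s}, \quad \beta = \frac{t - 2/3}{t-s},
\]
from which $\gamma - \alpha = s/(t-s) \geq 0$ and $\gamma - \beta = (1-t)/(t-s) \geq 0$, placing the resulting triple in the triangular region of~\eqref{BellmanPointsForExample}.

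For the reverse inclusion I would invert the formulas above. Given $(\alpha,\beta,\gamma)$ with $\alpha+\beta+\gamma=1$ and $0 \leq \alpha,\beta \leq \gamma$, setting
\[
t - s = \frac{1}{3\gamma}, \qquad s = \frac{\gamma-\alpha}{3\gamma}, \qquad t = \frac{2\gamma+\beta}{3\gamma}
\]
produces an interval $[s,t] \subset [0,1]$ satisfying $0 \leq s \leq 1/3$ and $2/3 \leq t \leq 1$ (the two endpoint inequalities are exactly $\alpha \leq \gamma$ and $\beta \leq \gamma$), and the induced weights are by construction the prescribed $(\alpha,\beta,\gamma)$. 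Since every triple of the triangular region is realized this way and the segments $AC$, $BC$ are covered by the first two cases, both inclusions are established. There is no genuine obstacle here: the statement is purely combinatorial and the only care needed is to organize the case split so that the degenerate triples on $AC$ and $BC$ are assigned to the two easy cases, leaving the inversion formula to handle the triangular region where $\gamma \geq 1/3$.
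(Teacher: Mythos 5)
Your proposal is correct. The paper states Fact~\ref{DescriptionOfBellmanPoints} without any proof, so there is no paper argument to compare against; your direct case analysis (splitting on whether $\J$ misses the third piece, misses the first piece, or straddles all three, and then inverting the weight formulas in the straddling case) is a complete and correct verification, with the observation $3\gamma \geq \alpha+\beta+\gamma = 1$ taking care of the potential division by zero in the inversion.
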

Now we construct the domain~$\Omega_1$ and the function~$f$. Define~$\Omega_1$ by the rule
\begin{equation*}
\Omega_1 = \Big\{x\in\mathbb{R}^2\,\Big|\; \Big\|x- \Big(-\frac12,0\Big)\Big\| \leq \frac{1}{2.9}\Big\}\cup \Big\{x\in\mathbb{R}^2\,\Big|\; \Big\|x- \Big(\frac12,0\Big)\Big\| \leq \frac{1}{2.9}\Big\}.
\end{equation*}
We draw Figure~\ref{AB} for reader's convenience (the picture has slightly different numeric parameters, what is important is that the two `erased' circles do not intersect the set~\eqref{BellmanPointsForExample}, however, the average of~$\varphi$ lies above the lower common tangent to the circles).  

By Fact~\ref{DescriptionOfBellmanPoints},~$\varphi \in \Class (\Omega)$. Define~$f\colon \FixedBoundary\Om \to \mathbb{R}$ by the formula
\begin{equation}\label{BoundaryCondition}
f(y) = \begin{cases}
0,\quad &y_2 \leq 0;\\
-y_2,\quad &y_2 > 0.
\end{cases} 
\end{equation}
Clearly,~$\av{f(\varphi)}{[0,1]} = 0$, and, thus,~$\Bell_f(0,-\frac13) \geq 0$. 

\begin{Le}
Consider~$f$ given by~\eqref{BoundaryCondition}. Then\textup,~$\BG_{\Om,f}(0,-\frac13) < 0$.
\end{Le}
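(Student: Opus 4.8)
We have to exhibit a single $G\in\LC{\Om}{f}$ with $G(0,-\tfrac13)<0$; then $\BG_{\Om,f}(0,-\tfrac13)\le G(0,-\tfrac13)<0$. Begin with two preliminary observations. Since $f\le 0$, the zero function lies in $\LC{\Om}{f}$, so $\BG_{\Om,f}\le 0$ on $\Om$. Every horizontal chord of $\Omega_0$ at a height $y_2=c<-\tfrac1{2.9}$ lies entirely in $\Om$ (it passes below both discs of $\Omega_1$) and its two endpoints lie on the lower semicircle, where $f=0$; concavity of $\BG$ along such a chord together with $\BG\le 0$ forces $\BG_{\Om,f}\equiv 0$ on $\Om\cap\{y_2\le-\tfrac1{2.9}\}$. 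Because $-\tfrac13>-\tfrac1{2.9}$, the point $p:=(0,-\tfrac13)$ sits just above this region, inside the thin channel between the two discs, and there $\BG$ is not a priori pinned to $0$.

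The numerical heart of the example is $\dist\big(p,[(-\tfrac12,0),(\tfrac12,0)]\big)=\tfrac13<\tfrac1{2.9}$, i.e.\ $p$ lies in the convex hull of the two discs; equivalently, the horizontal chord of $\Omega_0$ through $p$ meets both discs and hence is \emph{not} contained in $\Om$. Thus a locally concave function on $\Om$ need not be concave along that chord, and this is exactly what makes $G(p)<0$ possible: a \emph{globally} concave competitor dominating $f$ would satisfy $G(p)\ge 0$, since $p$ is the midpoint of a chord of $\Omega_0$ both of whose endpoints carry the value $f=0$. Moreover, inspecting the two tangent cones from $p$ to the discs one sees that the part of $\FixedBoundary\Om=\partial\Omega_0$ visible from $p$ inside $\Om$ is the union of an arc of the lower semicircle contained in $\{y_2<-c_1\}$ with $c_1>\tfrac13$ (on which $f=0$) and two arcs near the top on which $f\le -c_2<0$; this quantifies the slopes one is allowed to use.

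I would then build $G$ as follows: identically $0$ on the bottom cap $\Om\cap\{y_2\le y^*\}$ for a suitable $y^*\in(-\tfrac1{2.9},-\tfrac13)$; piecewise affine on the rest of $\Om$ (equivalently, obtained from an extension formula of the type~\eqref{LightExtension} using finitely many supporting planes); equal to $0$ on the three segments $\Om\cap\{y_2=y^*\}$, with a downward fold across the level $y_2=y^*$; dominating $f$ on the remaining part of $\partial\Omega_0$; and strictly below $0$ at $p$, which is feasible precisely because, after the descent into the channel, the supporting planes that get glued are screened from the lower semicircle by the two discs and may be tilted within the range left open by the visibility description above. Local concavity is then checked via Fact~\ref{Fact_superdiff}, by producing a supporting plane at every point of $\Om$: trivially inside each affine piece, and at the fold because the fold is downward.

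The bulk of the work — and the main obstacle — is to choose the affine pieces and the level $y^*$ so that all of these requirements hold simultaneously with the exact parameters (disc radius $\tfrac1{2.9}$, centres $(\pm\tfrac12,0)$, and $p=(0,-\tfrac13)$). The two delicate points are that every fold is genuinely \emph{downward}, which rests on each bounding line being cut off by the discs so that no segment of $\Om$ crosses it in the wrong direction, and that the resulting $G$ still dominates $f$ on the upper semicircle while dipping below $0$ at $p$. Verifying the visibility description itself — i.e.\ locating the arcs of $\partial\Omega_0$ seen from $p$ and checking $c_1>\tfrac13$, $c_2>0$ — is a routine but slightly tedious computation with the tangent lines from $p$ to the two discs.
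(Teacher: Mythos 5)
Your strategy is right --- exhibit a single $G\in\LC{\Om}{f}$ with $G(0,-\tfrac13)<0$ --- and your preliminary observations (that $\BG\le 0$, that $\BG\equiv 0$ below the discs by horizontal chords, and that $p=(0,-\tfrac13)$ sits inside the narrow channel because $\tfrac13<\tfrac1{2.9}$) are correct and aimed in the right direction. But what you have written is a plan, not a proof: you never actually produce the function $G$, and you yourself flag that ``the bulk of the work'' (choosing the affine pieces and the level $y^*$, verifying all the constraints simultaneously) remains. That is precisely the content of the lemma.

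There is also a concrete flaw in the plan as stated. You propose $G\equiv 0$ on $\Om\cap\{y_2\le y^*\}$ with $y^*\in(-\tfrac1{2.9},-\tfrac13)$ and a ``downward fold across the level $y_2=y^*$'' on \emph{all three} segments $\Om\cap\{y_2=y^*\}$. But the two outer segments touch $\FixedBoundary\Om=\partial\Omega_0$, and there $f=0$ for all $y_2\in(y^*,0)$; any downward fold at those outer segments would force $G<0=f$ on part of the fixed boundary, so $G\notin\LC{\Om}{f}$. The fold can be nontrivial only across the \emph{middle} segment, the one screened from $\partial\Omega_0$ by the two discs --- and once you say that, you are essentially led back to the paper's construction, which folds at the tangent level $y_2=-\tfrac1{2.9}$ (the common lower tangent $EF$ of the two discs), where the level set degenerates to the single segment through $E$ and $F$ and the ``screening'' is automatic.

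For comparison, the paper's argument is short and completely explicit: extend $f$ to $\Om$ by the same formula (so $f=\min(0,-y_2)$ is globally concave and $\BG\le f$), let $E,F$ be the lowest and $G,H$ the highest points of the two discs, let $L$ be the affine function agreeing with $f$ on both $EF$ (where $f=0$) and $GH$ (where $f=-\tfrac1{2.9}$), i.e.\ $L(y)=-\tfrac12\big(y_2+\tfrac1{2.9}\big)$, and set $\Phi=L$ on the corridor between $EF$ and $GH$ and $\Phi=f$ elsewhere. Since $L<f$ strictly inside the corridor and $L\ge f$ outside it, $\Phi$ agrees with $\min(f,L)$ along any segment of $\Om$ crossing the fold lines, hence is locally concave; moreover no segment of $\Om$ can pass directly between the corridor and the side regions at heights in $(-\tfrac1{2.9},\tfrac1{2.9})$ because a disc is in the way. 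Finally $\Phi(0,-\tfrac13)=L(0,-\tfrac13)=-\tfrac12\big(\tfrac1{2.9}-\tfrac13\big)<0$. None of your visibility analysis from $p$ is needed once one chooses the fold at the tangent level.
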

 \begin{proof}
It is clear that~$\BG_{\Om,f}(y) \leq f(y)$ provided we extend the function~$f$ to~$\Omega$ by the same formula~\eqref{BoundaryCondition}. Consider the points~$E$ and~$F$ on the erased circles that have the smallest possible second coordinates. In other words, they are the points on the lower common tangent to these two circles. Similarly, let~$G$ and~$H$ be the points having the largest possible second coordinates. 

Let~$L$ be the linear function that coincides with~$f$ on the segments~$EF$ and~$GH$ and let the part of~$\Omega$ lying between~$EF$ and~$GH$ be called the channel. Define the new function~$\Phi$ to be equal~$L$ on the channel and to be equal~$f$ everywhere else. It is easy to observe that~$\Phi$ is locally concave. On the other hand,~$L < f$ and thus,~$\Phi(0,-\frac13) < 0$.
\end{proof}

\bibliography{mybib}{}
\bibliographystyle{amsplain}

\bigskip

Dmitriy Stolyarov, d.m.stolyarov@spbu.ru,

St. Petersburg State University, Department of Mathematics and Computer Science;

\medskip

Pavel Zatitskiy, pavelz@pdmi.ras.ru,

St. Petersburg State University, Department of Mathematics and Computer Science.

\end{document}